%
%
%
%



\documentclass[11pt,reqno,oneside]{amsart}

\usepackage{graphicx, subfigure,}
\usepackage{amssymb}
\usepackage{epstopdf}
\usepackage{amssymb}
\usepackage[a4paper, total={6in, 9.1in}]{geometry}

\usepackage{bm}
\usepackage{amsmath,amsfonts,amsthm,mathrsfs,amssymb,cite}
\usepackage[usenames]{color}

\usepackage{graphics}
\usepackage{framed}

\usepackage{enumerate}
\usepackage{hyperref}
\usepackage{xcolor}
\hypersetup{
	colorlinks,
	linkcolor={blue!80!black},
	urlcolor={blue!80!black},
	citecolor={blue!80!black}
}
\usepackage[capitalize,noabbrev]{cleveref}

\usepackage{xcolor}


\numberwithin{equation}{section}


\newtheorem{theorem}{Theorem}[section]
\newtheorem{lemma}[theorem]{Lemma}

\theoremstyle{definition}

\theoremstyle{remark}
\newtheorem{remark}[theorem]{Remark}

\numberwithin{equation}{section}

\newcounter{saveeqn}




\newcommand{\ba}{\begin{array}}
	\newcommand{\ea}{\end{array}}

\newcommand{\bea}{\begin{eqnarray*}}
	\newcommand{\eea}{\end{eqnarray*}}
\newcommand{\bean}{\begin{eqnarray}}
	\newcommand{\eean}{\end{eqnarray}}




\allowdisplaybreaks[4]

\usepackage{xstring} %

\newcommand{\AuthorInfo}[4]{%
	\textsc{#1}%
	\IfStrEq{#4}{true}{$^{*}$}{}\\
	#2\\
	\textit{E-mail:} \texttt{#3}%
	\IfStrEq{#4}{true}{\\\textit{$^{*}$Corresponding author.}}{}
	\par\vspace{0.5em}
}

\usepackage{authblk}

\title[A New Quasi-Singularity Formation Mechanism]
{A New Quasi-Singularity Formation Mechanism for Second-order Hyperbolic Equations}

\date{} 

\begin{document}
	
	\maketitle
	
	\begin{center}
		\bigskip
		\footnotesize

		\AuthorInfo{Huaian Diao}{School of Mathematics and Key Laboratory of Symbolic Computation and Knowledge Engineering of Ministry of Education, Jilin University, Changchun 130012, People's Republic of China}{diao@jlu.edu.cn, hadiao@gmail.com}{false}
		
		\AuthorInfo{Xieling Fan}{School of Mathematics, Central South University, Changsha, China and Department of Mathematics, City University of Hong Kong, Kowloon, Hong Kong SAR, China}{fanxieling@outlook.com, xielinfan2-c@my.cityu.edu.hk}{false}
		
		\AuthorInfo{Hongyu Liu}{Department of Mathematics, City University of Hong Kong, Kowloon, Hong Kong SAR, People's Republic of China}{hongyu.liuip@gmail.com, hongyliu@cityu.edu.hk}{false}
	\end{center}

	\normalsize

	\begin{abstract}
		This paper investigates a novel mechanism for quasi-singularity formation in both linear and nonlinear hyperbolic wave equations in two and three dimensions. We prove that over any finite time interval, there exist inputs such that the H\"older norm of the resulting wave field exceeds any prescribed bound. Conversely, the set of such almost-blowup points has vanishing measure when the aforementioned bound goes to infinity. This phenomenon thus defines a quasi-singular state, intermediate between classical singularity and regularity. Crucially, both the equation coefficients and the inputs can be arbitrarily smooth; the quasi-singularity arises intrinsically from the structure of the hyperbolic wave equation combined with specific input characteristics.

		\medskip

		\noindent{\bf Keywords:}~~second-order hyperbolic equations; quasi-singularity formation; Herglotz approximation; interior transmission problem; Galerkin method; Banach fixed point theorem.
		\noindent{\bf 2020 Mathematics Subject Classification:}~~35L71, 35L67, 35L15, 35B44
		
	\end{abstract}

	\section{Introduction}
	
	\subsection{Mathematical setup and summary of major results}\label{setup}

	Initially we focus on the mathematical models for both linear and nonlinear second-order hyperbolic equations in two and three spatial dimensions in this paper, where the mechanism for these mathematical models will be elaborated in more detail later. Let the unknown function $u:\Omega \times [0,T] \rightarrow \mathbb{C}$ satisfy the homogeneous initial-boundary value problem for a linear second-order hyperbolic equation:
	\begin{equation}\label{1.1}
		\begin{cases}
			L_1u = 0 & \text{in } \Omega \times (0,T], \\
			u(\mathbf{x},0) = \phi_1(\mathbf{x}), \quad \partial_t u(\mathbf{x},0) = \phi_2(\mathbf{x}) & \text{in } \Omega, \\
			u(\mathbf{x},t) = \psi(\mathbf{x},t) & \text{on } \partial\Omega \times [0,T],
		\end{cases}
	\end{equation}
	and let $U: \mathbb{R}^d \times [0,T] \rightarrow \mathbb{C}$ satisfy the Cauchy problem for a nonlinear  second-order hyperbolic equation:
	\begin{equation}\label{1.2}
		\left\{
		\begin{aligned}
			&L_2U + N(U,\partial_{t}U,\nabla U) = 0 
			&& \text{in } \mathbb{R}^{d} \times (0,T], \\
			&U(\mathbf{x},0) = \Phi_1(\mathbf{x}), \quad \partial_t U(\mathbf{x},0) = \Phi_2(\mathbf{x}) 
			&& \text{in } \mathbb{R}^d,
		\end{aligned}
		\right.
	\end{equation}
	where the following conditions hold:
	\begin{enumerate}
		\item $\Omega$ is a bounded Lipschitz domain in $\mathbb{R}^d$, $d=2, 3$, and $0<T<\infty$.
		
		\item $L_1=\partial_{tt}^2  - \nabla \cdot (\mathbf{A}_1\nabla ) + \mathbf{b}_1\cdot \nabla  + c_1$, $L_2=\partial_{tt}^2  - \nabla \cdot (\mathbf{A}_2\nabla ) + \mathbf{b}_2\cdot \nabla  + c_2$.
		
		\item The complex-valued coefficients $\mathbf{A}_1 = (a_1^{ij}(\mathbf{x}))_{i,j=1}^d$, $\mathbf{b}_1= (b_1^i(\mathbf{x},t))_{i=1}^d$, $c_1(\mathbf{x},t)$ satisfy:
		\begin{enumerate}
			\item $\overline{\mathbf{A}}_1^{\top} \! (\mathbf{x}) = \mathbf{A}_1(\mathbf{x}), ~\forall \mathbf{x} \in \Omega$;
			
			\item $\overline{\xi}^{\top} \mathbf{A}_1(\mathbf{x}) \xi \geq \theta \lvert \xi \rvert^{2}$ for a.e. $\mathbf{x} \in \Omega$, $\forall \xi \in \mathbb{C}^d$, with constant $\theta > 0$;
			
			\item $\mathbf{A}_1\in W^{2,\infty}(\Omega)$;
			
			\item $\mathbf{b}_1, c_1 \in W^{1,\infty}\big(0,T; W^{1,\infty}(\Omega)\big) \cap W^{2,\infty}\big(0,T; L^{\infty}(\Omega)\big)$,
		\end{enumerate}
		where $\overline{\cdot}$ denotes the complex conjugate and $\top$ denotes the transpose.
		
		\item $D$ is a bounded Lipschitz domain such that $D\Subset\Omega$, $\Omega \backslash \overline{D}$ is connected and  $\mathbf{A}_1$, $\mathbf{b}_1$, $c_1$ have compact support in $D$, i.e.,
		\begin{equation*}\label{compact support}
			\operatorname{supp}(\mathbf{A}_1 - \mathbf{I}) \subset D,\quad
			\operatorname{supp} \mathbf{b}_1(\cdot,t) \subset D,\quad
			\operatorname{supp} c_1(\cdot,t) \subset D \quad \forall t \in [0,T].
		\end{equation*}
		
		\item The complex-valued coefficients $\mathbf{A}_2 = (a_2^{ij}(\mathbf{x},t))_{i,j=1}^d$, $\mathbf{b}_2= (b_2^i(\mathbf{x},t))_{i=1}^d$, $c_2(\mathbf{x},t)$ satisfy:
		\begin{enumerate}
			\item $\overline{\mathbf{A}}_2^{\top} \! (\mathbf{x},t) = \mathbf{A}_2(\mathbf{x},t), ~\forall (\mathbf{x},t) \in \mathbb{R}^d\times [0,T]$;
			
			\item $\overline{\xi}^{\top} \mathbf{A}_2(\mathbf{x},t) \xi \geq \theta \lvert \xi \rvert^{2}$ for all $(\mathbf{x},t) \in \mathbb{R}^d \times [0,T]$, $\forall \xi \in \mathbb{C}^d$, with constant $\theta > 0$;
			
			\item $\mathbf{A}_2\in {W^{1,\infty}\left(0,T; W^{3,\infty}(\mathbb{R}^d)\right)}$;
			
			\item $\mathbf{b}_2, c_2 \in L^{\infty}\big(0,T; W^{2,\infty}(\mathbb{R}^d)\big)$.
		\end{enumerate}
		
		\item $\mathbf{A}_2$, $\mathbf{b}_2$, $c_2$ have compact support in $D$, i.e.,
		\begin{equation*}\label{compact support2}
			\operatorname{supp}(\mathbf{A}_2(\cdot,t) - \mathbf{I}) \subset D,\quad
			\operatorname{supp} \mathbf{b}_2(\cdot,t) \subset D,\quad
			\operatorname{supp} c_2(\cdot,t) \subset D \quad \forall t \in [0,T].
		\end{equation*}
		
		\item The nonlinearity $N: \mathbb{C} \times  \mathbb{C} \times  \mathbb{C}^{d}\to \mathbb{C}$ is a complex polynomial given by
		\begin{equation}
			N(U,\partial_{t} U,\nabla U) =\sum_{k=2}^{l_{0}} \left( \alpha_k U^k + \beta_k ({\partial_t U})^k + \sum_{j=1}^d \gamma_{k,j} (\partial_{x_j} U)^k \right),
			\label{eq:nonlinearity}
		\end{equation}
		where $l_0 \geq 2$ is a fixed integer.
		
		\item The coefficients in nonlinearity $N$ satisfy$:$
		\[
		\alpha_k, \beta_k, \gamma_{k,j} \in C^{\infty}([0,\infty)\times \mathbb{R}^d;\mathbb{C}), \quad k=2,\dots,l_0,\ j=1,\dots,d,
		\]
		with compact support in $D$, i.e.,
		\[
		\operatorname{supp}(\alpha_k(\cdot,t)),\ \operatorname{supp}(\beta_k(\cdot,t)),\ \operatorname{supp}(\gamma_{k,j}(\cdot,t)) \subset D, \quad \forall t\in[0,\infty).
		\]
		
		\item The initial data $\phi_1,\phi_2\in L^2{(\Omega)}$, $\Phi_1,\Phi_2 \in L^{2}_{\mathrm{loc}}(\mathbb{R}^d)$, and boundary data $\psi \in L^{2}(\partial \Omega)$ will be specified in a later section.
		
		\item $\omega$ is a fixed positive constant, which denotes the wave number in \eqref{wave number} and the transmission eigenvalue in \eqref{Interior Transmission Problem}.
	\end{enumerate}
	
	
	In this paper, we construct a novel mathematical mechanism governing {\it quasi-singularity} formation for both linear and nonlinear second-order hyperbolic equations in two and three spatial dimensions. The quasi-singularity formation is characterized by two key features:
	
	\begin{enumerate}
		\renewcommand{\labelenumi}{(\roman{enumi})}
		\item For any prescribed distinct $n$ points $\mathbf{x}_1,\mathbf{x}_2,\ldots,\mathbf{x}_n \in \partial D$, where $D$ is the support of the anisotropic terms $\mathbf A_1$ in \eqref{1.1} or $\mathbf A_2$ in \eqref{1.2}, the first-order coefficients $\mathbf b_i$ ($i=1,2$) and zero-order terms $c_i$ ($i=1,2$) in \eqref{1.1} or \eqref{1.2}, and the nonlinear term $N$ in \eqref{1.2}, we rigorously prove---by selecting suitably tuned smooth initial-boundary or initial input---that there exist almost-blowup regions near each $\mathbf{x}_i$ ($i=1,\ldots,n$). In these regions, the $L^\infty$-norms of the solutions $u$ to \eqref{1.1} and $U$ to \eqref{1.2} exceed any prescribed large positive constant $\mathcal M$ for almost every $t \in [0,T]$, as stated in Theorems \ref{thm:main} and \ref{thm:main2}. Here $T\in \mathbb R_+$ can be arbitrarily chosen. 
		\item We provide precise characterizations of the measure of these almost-blowup regions in terms of $\mathcal M$, which quantifies the almost-blowup behavior of the solutions $u$ and $U$ to \eqref{1.1} and \eqref{1.2} under the tuned input. Our detailed analysis reveals that as $\mathcal M \to \infty$, the measure of the almost-blowup regions tends to zero. We thus term this almost-blowup phenomenon, marked by the vanishing measure of the almost-blowup regions, ``quasi-singularity formation''.
	\end{enumerate}
	
	We are now in a position to state our main theorems. For initial-boundary value problem of a linear second-order hyperbolic equation \eqref{1.1}, we have:
	\begin{theorem}\label{thm:main}
		For any arbitrarily large $\mathcal{M} > 0$, $n$ distinct points $\mathbf{x}_1,\mathbf{x}_2,\ldots,\mathbf{x}_n \in \partial D$, and any time interval $[0,T]$ with $0 < T < \infty$, by prescribing appropriate initial and boundary inputs $($as specified in \eqref{initial and boundary condition}$)$, there exists a small parameter
		$$ r_0 = \min\left\{\,
		\frac{1}{3}(\mathcal{M} + 1)^{-1},\ 
		\frac{1}{6}\min_{1 \le i < j \le n} |\mathbf{x}_i - \mathbf{x}_j|,\ 
		\frac{1}{3}\operatorname{dist}(D, \partial \Omega)
		\,\right\}$$
		such that problem \eqref{1.1} admits a unique solution $u$ satisfying $:$
		$$
		u \in L^{\infty}\big(0,T;H^{1}(\Omega)\big), \quad 
		u_t \in L^{\infty}\big(0,T;L^{2}(\Omega)\big),
		$$
		with the following properties $:$
		\begin{enumerate}
			\item Improved H\"older regularity in $\Omega\backslash\overline{D}:$
			for any bounded Lipschitz domain $\Omega' \Subset \Omega\backslash\overline{D}$,
			\[
			u \in L^{\infty}\big([0,T]; C^{1,\frac{1}{2}}(\overline{\Omega}')\big).
			\]
			\item Simultaneous spatial gradient almost-blowup near $\mathbf{x}_1,\mathbf{x}_2,\dots,\mathbf{x}_n :$ for almost every $t \in [0,T]$,
			\[
			\|\nabla u(\cdot,t)\|_{L^{\infty} (B_{3r_0}(\mathbf{x}_i) \backslash \overline{D})} \geq \mathcal{M}, \quad i=1,2,\dots,n.
			\]
			\item Vanishing measure of the almost-blowup set $:$ 
			for almost every $t \in [0,T]$,
			\begin{equation*}
				\begin{aligned}
					&\operatorname{meas}\left\{ \mathbf{x} \in \bigcup_{i=1}^{n} \left(B_{3r_0}(\mathbf{x}_i)\setminus \overline{D}\right) : |\nabla u(\mathbf{x},t)| > \mathcal{M} \right\}\\
					&	< \begin{cases}
						36n\pi r_0^3 < \dfrac{4n\pi}{3(\mathcal{M} + 1)^3} \to 0,&d=3, \\
						9n\pi r_0^2 < \dfrac{n\pi}{(\mathcal{M}+1)^2} \to 0,&d=2,
					\end{cases}
				\end{aligned}
			\end{equation*}
			as $\mathcal{M} \to \infty$.
		\end{enumerate}
	\end{theorem}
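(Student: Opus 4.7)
The plan is to first settle well-posedness and interior regularity using standard hyperbolic theory, and then engineer the input in \eqref{initial and boundary condition} through a time-harmonic Herglotz ansatz whose gradient is tuned to spike at the prescribed points on $\partial D$, with the coupling across $\partial D$ handled by the interior transmission problem \eqref{Interior Transmission Problem} and a Banach fixed-point argument inside $D$.

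For the first part, existence and uniqueness of $u\in L^{\infty}(0,T;H^1(\Omega))$ with $u_t\in L^{\infty}(0,T;L^2(\Omega))$ follow from a Galerkin scheme against the Dirichlet eigenbasis of $-\nabla\cdot(\mathbf{A}_1\nabla\cdot)$ on $\Omega$, after lifting the boundary datum $\psi$ into $\Omega$ and applying a Gr\"onwall energy estimate built on the coercivity of $\mathbf{A}_1$ (condition~3(b)) together with the time regularity of $\mathbf{b}_1,c_1$. Since on $\Omega\setminus\overline D$ the equation reduces to $\partial_{tt}u-\Delta u=0$ with constant principal part and no lower-order terms, claim~(1) follows from classical interior regularity for the wave equation once the data are smooth away from $D$; the Herglotz construction below produces smooth inputs, so this applies.

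The core is the explicit construction of the input. Fix the transmission eigenvalue $\omega>0$ from \eqref{Interior Transmission Problem} and introduce the Herglotz wave function
\[
v_h(\mathbf{x})=\int_{\mathbb{S}^{d-1}} e^{i\omega\,\mathbf{x}\cdot\mathbf{d}}\,g(\mathbf{d})\,d\sigma(\mathbf{d}),\qquad (\Delta+\omega^{2})v_h=0\ \text{in}\ \mathbb{R}^{d}.
\]
I would tune the density $g\in L^{2}(\mathbb{S}^{d-1})$ so that $|\nabla v_h(\mathbf{x}_i)|\ge \mathcal{M}+1$ simultaneously for $i=1,\dots,n$; because the map $g\mapsto(\nabla v_h(\mathbf{x}_i))_{i=1}^{n}$ is surjective onto $\mathbb{C}^{dn}$ for pairwise distinct $\mathbf{x}_i$, this reduces to solving a finite linear system and normalising. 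Continuity of $\nabla v_h$ combined with the definition of $r_0$ propagates the lower bound to each $B_{3r_0}(\mathbf{x}_i)$ and makes these balls pairwise disjoint and relatively compact in $\Omega$. Set $U_0(\mathbf{x},t):=e^{-i\omega t}v_h(\mathbf{x})$ and prescribe
\[
\phi_1=v_h,\qquad \phi_2=-i\omega v_h,\qquad \psi=U_0\big|_{\partial\Omega\times[0,T]}.
\]
Writing $u=U_0+R$, the remainder $R$ solves a linear hyperbolic problem with zero Cauchy and boundary data and source $f=-L_1U_0$, which is supported in $\overline{D}$ because $U_0$ exactly solves $\partial_{tt}U_0-\Delta U_0=0$ outside $D$. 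Invoking the transmission-eigenpair structure, $R$ is then reformulated as the interior component of a transmission pair whose exterior component is $v_h$, and a Banach fixed-point argument in the energy space of $D$, treating $\mathbf{b}_1\cdot\nabla+c_1$ as a small perturbation, produces an $R$ supported in $\overline{D}$. Consequently $u\equiv U_0$ on $(\Omega\setminus\overline{D})\times[0,T]$, hence $|\nabla u(\cdot,t)|=|\nabla v_h|\ge \mathcal{M}$ on $B_{3r_0}(\mathbf{x}_i)\setminus\overline{D}$ for every $t\in[0,T]$, which is claim~(2); claim~(3) is then immediate from the inclusion of the almost-blowup set in $\bigcup_i B_{3r_0}(\mathbf{x}_i)$ combined with $r_0\le\tfrac{1}{3}(\mathcal{M}+1)^{-1}$.

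The hard part is the last paragraph's reduction: a naive time-harmonic ansatz fails inside $D$ because $\mathbf{b}_1,c_1$ genuinely depend on time, so the residual $L_1 U_0$ is not annihilated by any pure phase factor. One must verify that the interior transmission framework still permits absorbing this residual via a contraction argument \emph{without leaking the correction $R$ outside $D$}. This requires sharp quantitative control of the transfer of the transmission matching from the exact eigenpair to the Herglotz approximant, together with exploiting the coercivity of $\mathbf{A}_1$ against the lower-order terms at the chosen $\omega$-scale, so that the contraction has a small enough Lipschitz constant in the relevant energy norm to produce an interior-supported remainder and thereby preserve the identity $u=U_0$ in $\Omega\setminus\overline{D}$ on which the gradient spikes rely.
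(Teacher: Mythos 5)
The central difficulty you flag at the end is not a technical loose end to be patched by a sharper contraction estimate; it is where the approach fails. For a second-order hyperbolic operator, a remainder $R$ with source supported in $\overline D$ and zero Cauchy/boundary data does \emph{not} stay supported in $\overline D$: by finite speed of propagation $R$ fills an expanding cone, so for any fixed $T>0$ (and certainly for the arbitrary $T$ allowed in the theorem) $R$ is nonzero on a full neighbourhood of $D$ in $\Omega\setminus\overline D$. There is no fixed-point or transmission-matching mechanism that can force $u\equiv U_0$ on $(\Omega\setminus\overline D)\times[0,T]$; indeed, if that identity held one would get the gradient lower bound for \emph{every} $t$, whereas the theorem only claims it for a.e.\ $t$, which already hints that the correct argument is perturbative, not exact.

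What the paper does instead is make the remainder \emph{small}, not zero, in $\Omega\setminus\overline D$, and this hinges on a feature your construction omits: the Herglotz density $g$ is chosen so that $H_g$ is $\varepsilon$-close (in a high Sobolev norm) to a function that is \emph{identically zero on a smooth neighbourhood of $D$} and equal to a high-mode transmission eigenfunction $v$ on small balls $B_{r_0}(\mathbf{y}_i)$ lying strictly outside $\overline D$ near the $\mathbf{x}_i$. Because $u_0=H_g e^{i\omega t}$ is then $O(\varepsilon)$-small in $\overline D$, the source $F_1=-\nabla\cdot((\mathbf I-\mathbf A_1)\nabla u_0)+\mathbf b_1\cdot\nabla u_0+c_1 u_0$ has $H^1_tH^1_x$-norm $O(\varepsilon)$, and the Galerkin energy estimate plus interior elliptic regularity for $-\Delta\mathcal U=\partial_{tt}\mathcal U$ in $\Omega\setminus\overline D$ yields $\|\mathcal U\|_{L^\infty(0,T;C^{1,1/2}(\overline\Omega'))}\le C_3\varepsilon$. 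Meanwhile, the transmission eigenfunction $v$ with mode number $m$ has amplitude $\sim\sqrt{2m+3}\,r_0^{-3/2}$ on $\partial B_{r_0}(\mathbf{y}_i)$ while being near $0$ on $\partial D$, so the mean value theorem produces points in $B_{3r_0}(\mathbf{x}_i)\setminus\overline D$ where $|\nabla u_0|\gtrsim (\sqrt{m}\,r_0^{-3/2}-C\varepsilon)/(3r_0)$; choosing $m$ large makes this dominate $\mathcal M+C_3\varepsilon$. Your alternative spiking mechanism --- surjectivity of $g\mapsto(\nabla v_h(\mathbf{x}_i))_i$ --- controls $\nabla v_h$ only at the isolated points $\mathbf{x}_i$, gives no smallness of $v_h$ in $D$ (so $F_1$ is not small and the remainder is not controllable), and the modulus of continuity of $\nabla v_h$ is not uniform as $\mathcal M\to\infty$, so ``continuity propagates the bound to $B_{3r_0}(\mathbf{x}_i)$'' is not justified either, since $r_0\to 0$ and $g$ must change with $\mathcal M$.

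In short, the two ideas you need and do not have are: (i) replace ``$R$ supported in $D$'' by ``$R$ small outside $D$ in $C^{1,1/2}$'', proved via $\|F_1\|\lesssim\varepsilon$ plus interior elliptic regularity for the flat wave equation in $\Omega\setminus\overline D$; and (ii) achieve $\|F_1\|\lesssim\varepsilon$ by forcing $H_g$ to be simultaneously small in $D$ and large near the $\mathbf{x}_i$, which is exactly what the transmission-eigenfunction-plus-Herglotz-approximation construction (Theorem~\ref{thm:main_result}) delivers and what a direct Herglotz ansatz does not.
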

	
	Similarly, for Cauchy problem of a nonlinear second-order hyperbolic equation \eqref{1.2}, we have:
	\begin{theorem}\label{thm:main2}
		For any arbitrarily large $\mathcal{M} > 0$, $n$ distinct points $\mathbf{x}_1,\mathbf{x}_2,\ldots,\mathbf{x}_n \in \partial D$, and any time interval $[0,T]$ with $0 < T < \infty$, by prescribing appropriate initial inputs $($as specified in \eqref{initial condition}$)$, there exists a small parameter
		$$
		r_0  = \min\left\{\frac{1}{3}(\mathcal{M} + 1)^{-1},\  \frac{1}{6}\min_{1 \leq i < j \leq n} |\mathbf{x}_i - \mathbf{x}_j|\right\}
		$$
		such that problem \eqref{1.2} admits a unique solution $U$ satisfying:
		$$
		U \in L^{\infty}\big(0,T;H^{3}_{\mathrm{loc}}(\mathbb{R}^d)\big), \quad 
		U_t \in L^{\infty}\big(0,T;H^{2}_{\mathrm{loc}}(\mathbb{R}^d)\big),
		$$
		with the following properties$:$
		\begin{enumerate}
			\item \textit{H\"older regularity}: For any bounded Lipschitz domain $\Omega' \subset \mathbb{R}^d$,
			$$
			U \in L^{\infty}\big([0,T]; C^{1,\frac{1}{2}}(\overline{\Omega}')\big).
			$$
			
			\item Simultaneous spatial gradient almost-blowup near $\mathbf{x}_1,\mathbf{x}_2,\dots,\mathbf{x}_n:$ for almost every $t \in [0,T]$,
			$$
			\|\nabla U(\cdot,t)\|_{L^{\infty} (B_{3r_0}(\mathbf{x}_i) \backslash \overline{D})} \geq \mathcal{M}, \quad i=1,2,\dots,n.
			$$
			
			\item Vanishing measure of the almost-blowup set: 
			for almost every $t \in [0,T]$,
			\begin{equation*}
				\begin{aligned}
					&\operatorname{meas}\left\{ \mathbf{x} \in \bigcup_{i=1}^{n} \left(B_{3r_0}(\mathbf{x}_i)\setminus \overline{D}\right) : |\nabla U(\mathbf{x},t)| > \mathcal{M} \right\}\\
					&	< \begin{cases}
						36n\pi r_0^3 < \dfrac{4n\pi}{3(\mathcal{M} + 1)^3} \to 0,&d=3, \\
						9n\pi r_0^2 < \dfrac{n\pi}{(\mathcal{M}+1)^2} \to 0,&d=2,
					\end{cases}
				\end{aligned}
			\end{equation*}
			as $\mathcal{M} \to \infty$.
		\end{enumerate}
	\end{theorem}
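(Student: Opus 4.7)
My plan is to write the solution as $U = U_L + W$, where $U_L$ is a linear solution of $L_2 U_L = 0$ on $\mathbb{R}^d \times [0,T]$ that carries the almost-blowup behavior, and $W$ is a small nonlinear correction with vanishing Cauchy data. Because the coefficients of $L_2 - (\partial_{tt}-\Delta)$ and every monomial in $N$ are supported in $D$, the operator $L_2$ reduces to the free d'Alembertian outside $\overline{D}$. I would first rerun the linear construction behind Theorem~\ref{thm:main} for the operator $L_2$: solve the interior transmission problem at the fixed transmission eigenvalue $\omega$, producing a pair $(v,w)$ whose gradient concentrates on $\partial D$ near the prescribed points $\mathbf{x}_1,\ldots,\mathbf{x}_n$. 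The Herglotz approximation then supplies an entire Helmholtz solution $v_H$ arbitrarily close to $v$ on $\overline{D}$; setting $\Phi_1 := v_H$ and $\Phi_2 := -\mathrm{i}\omega v_H$ yields a smooth time-harmonic Cauchy solution $U_L(\mathbf{x},t) = \mathrm{e}^{-\mathrm{i}\omega t} v_H(\mathbf{x})$ of $L_2 U_L = 0$, whose spatial gradient---after a rescaling calibrated to the target bound $\mathcal{M}+1$---satisfies $\|\nabla U_L(\cdot,t)\|_{L^\infty(B_{3r_0}(\mathbf{x}_i)\setminus\overline{D})} \ge \mathcal{M}+1$ for almost every $t\in[0,T]$.

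For the correction, $W = U - U_L$ satisfies $L_2 W = -N(U_L+W, \partial_t(U_L+W), \nabla(U_L+W))$ with zero Cauchy data. I would set up a Banach fixed point argument on the ball
\begin{equation*}
\mathcal{B}_\eta := \bigl\{\, W :\ \|W\|_{L^\infty(0,T;H^3_{\mathrm{loc}})} + \|\partial_t W\|_{L^\infty(0,T;H^2_{\mathrm{loc}})} \le \eta \,\bigr\},
\end{equation*}
with $\eta > 0$ small but independent of $\mathcal{M}$. Given $W\in\mathcal{B}_\eta$, define $\mathcal{T}(W):=\tilde W$ to be the solution of the linear Cauchy problem $L_2\tilde W = -N(U_L+W, \partial_t(U_L+W), \nabla(U_L+W))$ with zero data; existence, uniqueness and $H^3_{\mathrm{loc}}$-regularity follow from a Galerkin construction coupled with third-order energy estimates, using the coefficient regularity in item~(5). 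Since $N$ is a polynomial vanishing to second order with smooth coefficients supported in $D$, the Moser product estimate together with the embedding $H^2(D)\hookrightarrow L^\infty(D)$ (valid for $d\le 3$) bounds the nonlinear source by a polynomial in $\|U_L\|_{H^3(D)}+\|W\|_{H^3_{\mathrm{loc}}}$ with quadratic leading term. Taking $\eta$ sufficiently small turns $\mathcal{T}$ into a contraction on $\mathcal{B}_\eta$, and its fixed point defines $W$, hence $U$.

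The three conclusions then follow from this decomposition. For item~(1), combine the smoothness of $U_L$ with the embedding of $H^3_{\mathrm{loc}}(\mathbb{R}^d)$ into $C^{1,1/2}_{\mathrm{loc}}$ (immediate for $d=2$, and via refined Besov/trace estimates at the borderline case $d=3$). For item~(2), picking $\eta$ small forces $\|\nabla W(\cdot,t)\|_{L^\infty_{\mathrm{loc}}}\le 1$, so $\|\nabla U(\cdot,t)\|_{L^\infty(B_{3r_0}(\mathbf{x}_i)\setminus\overline{D})} \ge (\mathcal{M}+1) - 1 = \mathcal{M}$. Item~(3) is a direct volume/area estimate: the exceptional set lies in $\bigcup_{i=1}^n B_{3r_0}(\mathbf{x}_i)$, whose measure is at most $n\cdot|B_{3r_0}|$, and the stated bounds drop out of $r_0\le(\mathcal{M}+1)^{-1}/3$. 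The main obstacle I anticipate is closing the contraction with $\eta$ independent of $\mathcal{M}$: outside $D$ the gradient of $U_L$ grows with $\mathcal{M}$, so the argument must crucially exploit that $N$ is supported strictly inside $D$---where the interior transmission/Herglotz construction keeps $U_L$ uniformly bounded in $C^3(D)$ as $\mathcal{M}\to\infty$---to prevent the nonlinear source from inheriting the blowup in $\mathcal{M}$.
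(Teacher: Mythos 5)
Your proposal is close in spirit to the paper's (both write $U = (\text{wave profile}) + (\text{correction})$ and close the correction by a fixed-point argument), but it contains a genuine error and a significant gap.

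\textbf{The error.} You set $U_L(\mathbf{x},t) = \mathrm{e}^{-\mathrm{i}\omega t} v_H(\mathbf{x})$ where $v_H$ is a Herglotz (entire Helmholtz) solution and assert that $U_L$ solves $L_2 U_L = 0$. This is false inside $D$: since $\Delta v_H + \omega^2 v_H = 0$, one has
\[
L_2 U_L = \mathrm{e}^{-\mathrm{i}\omega t}\bigl[-\nabla\cdot\bigl((\mathbf{A}_2 - \mathbf{I})\nabla v_H\bigr) + \mathbf{b}_2\cdot\nabla v_H + c_2 v_H\bigr],
\]
which is a nontrivial function supported in $D$. A Herglotz wave function is a free-wave solution, not a solution of the variable-coefficient operator $L_2$. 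Consequently your correction $W$ must absorb a \emph{linear} source term $-L_2 U_L$ in addition to the nonlinear remainder; your equation for $W$ as written is incomplete. The paper is careful on exactly this point: it defines $u_0 = H_g\,\mathrm{e}^{\mathrm{i}\omega t}$ as a solution of $\partial_{tt}^2 - \Delta$, and then the auxiliary unknown $\mathscr{U} = U - u_0$ solves a \emph{forced} problem with source $F_2$ that collects precisely this linear mismatch along with the zeroth-order nonlinear residual (see the definition of $F_2$ following \eqref{nonlinear hyperbolic equation with source term and zero initial condition}).

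\textbf{The gap.} You propose a contraction on a fixed ball $\mathcal{B}_\eta$ with $\eta$ independent of $\mathcal{M}$, arguing that $U_L$ is ``uniformly bounded in $C^3(D)$'' as $\mathcal{M}\to\infty$. Uniform boundedness is not enough: with an $O(1)$ source on $D$ the contraction constant grows like $\exp(CT)\cdot T$, so the fixed point closes only up to a fixed $T_0$, whereas the theorem requires existence on an \emph{arbitrary} prescribed $[0,T]$. The paper's construction makes $u_0$ not merely bounded but $O(\varepsilon)$ in $H^5(D)$ (cf. \eqref{important estimate for H_g}, \eqref{u0 smalles}), hence $\|F_2(\cdot,t)\|_{H^2} \le C\varepsilon$ and $\hat{N}$ contributes $O(\varepsilon)$; the fixed point is run on the shrinking ball $X_\varepsilon = \{|||u||| \le \sqrt{\varepsilon}\}$, and the lifespan condition $C_4(T)T \le 1/\sqrt{\varepsilon}$, $C_5(T)T \le 1/(4\varepsilon)$ in \eqref{eq5.17} is satisfied for any prescribed $T$ by choosing $\varepsilon$ small enough in the tuning \eqref{eeq 5.20}. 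This $\varepsilon$-tuning (which interacts with $r_0$ and $m$ in Theorem~\ref{thm:main_result}) is the mechanism that yields arbitrary lifespan, and it is absent from your proposal. A secondary minor point: the embedding $H^3(\Omega')\hookrightarrow C^{1,1/2}(\overline{\Omega'})$ for $d=3$ is a standard Morrey embedding ($3 - 3/2 = 1 + 1/2$), not a borderline case needing Besov refinements.
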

	
	\begin{remark}
		We refer to the solutions $u$ and $U$ in the above problems as {\it almost-blowup solutions}. The associated {\it almost-blowup points} are those prescribed \(\mathbf{x}_1, \mathbf{x}_2, \dots, \mathbf{x}_n\)  where the solution's spatial gradient exceeds the prescribed bound \(\mathcal{M}\).
	\end{remark}
	
	\begin{remark}
		The novel mechanism for quasi-singularity formation in second-order hyperbolic equations exhibits the following key features. 
		
		Firstly, it applies to a broad class of linear and nonlinear second-order hyperbolic equations in two and three dimensions. The equation coefficients inside the region \(D\) can be quite general (e.g., smooth coefficients or nonlinear terms given by arbitrary polynomials), while outside \(D\) the equation reduces to the classical wave equation.
		
		Secondly, for any terminal time $T\in \mathbb R_+$, any {arbitrarily large} prescribed bound \(\mathcal{M}\) and any  {arbitrary} set of \(n\) prescribed distinct points on \(\partial D\), one can construct suitably designed smooth initial and/or boundary inputs such that the magnitude of the spatial gradient of the solutions to problems \eqref{1.1} and \eqref{1.2} exceeds $\mathcal{M}$ in a neighborhood of each prescribed point \(\mathbf{x}_1, \mathbf{x}_2, \dots, \mathbf{x}_n\) for almost every $t \in [0, T]$, in both linear and nonlinear settings. Here $T \in \mathbb R_+$  can be chosen arbitrarily. 
		
		Thirdly, the constructed almost-blowup points are confined within the balls $B_{3r_0}(\mathbf{x}_i)$. Consequently, as $\mathcal{M} \to \infty$, the measure of the almost-blowup set tends to zero, which is consistent with the well-posedness of the solutions. This demonstrates a fundamental trade-off: attaining a higher gradient requires reducing the spatial extent of the quasi-singularity region, while a more moderate gradient permits the almost-blowup region to occupy a larger area. Furthermore, when the prescribed points \(\mathbf{x}_1, \mathbf{x}_2, \dots, \mathbf{x}_n\) are closely spaced, their corresponding almost-blowup regions must be sufficiently constrained to prevent mutual interaction.
	\end{remark}
	
	To the best of our knowledge, this work is the first to establish such quasi-singularity formation and the existence of almost-blowup solutions for second-order hyperbolic equations in both linear and nonlinear settings. For a detailed discussion, see Subsection~\ref{subsection 1.3}. We provide several remarks to discuss the methodology for establishing the main results, comparisons between the assumptions and conclusions of Theorems~\ref{thm:main} and~\ref{thm:main2}, and comments regarding the a priori assumption on the geometry for generating quasi-singularity formation.
	
	\begin{remark}
		The quasi-singularity formation mechanism we construct for second-order hyperbolic equations is grounded in spectral analysis, specifically involving transmission eigenfunctions of the interior transmission problem \cite{ColtonKress} and their approximation by Herglotz wave functions. Using these tools, we carefully construct a key function \( u_0 \), which plays a central role in designing the initial and boundary inputs for both linear and nonlinear second-order hyperbolic equations. Further details are provided in Section \ref{section 3}.
	\end{remark}
	
	\begin{remark}
		The solution to nonlinear problem \eqref{1.2} exhibits higher regularity than that of the linear problem \eqref{1.1}, owing to the improved spatial regularity of its coefficients. In the linear case, to compensate for the limited spatial regularity, we employ interior elliptic regularity for \( u(t) \) in \( \Omega \setminus \overline{D} \) for almost every \( t \in [0,T] \), and further establish that \( u \in L^{\infty}\left(0,T; H^{3}_{\mathrm{loc}}(\Omega \setminus \overline{D})\right) \). Therefore, in both settings, by Sobolev embedding theorem the solutions maintain \( C^{1,\frac{1}{2}} \) regularity near the almost-blowup points for almost every $t \in [0,T] $, which ensures that their spatial gradients exist in the classical sense. Moreover, if the time-regularity of the coefficients is strengthened, the almost everywhere in time result can be improved to hold for all \( t \in [0,T] \).
	\end{remark}
	
	\begin{remark}
		In this work, the set of almost-blowup points is characterized using balls (in $3D$) or disks (in $2D$). We believe that employing other types of sets to describe such quasi-singularities is also feasible, though it would require more sophisticated technical approaches.
	\end{remark}

	\begin{remark}
		For both the linear problem \eqref{1.1} and the nonlinear problem \eqref{1.2}, the terminal time $T \in \mathbb{R}_+$ of the interval $[0, T]$ can be chosen arbitrarily large while still guaranteeing the almost blow-up behaviors of the solutions. In the linear case, this follows directly from the classical well-posedness theory for linear second-order hyperbolic equations. In the nonlinear case, however, such long-time existence requires a more refined analysis. By carefully tuning the initial data and combining spectral analysis with an application of the Banach fixed-point theorem, we are able to extend the solution to \eqref{1.2} over any prescribed time interval $ T $. For further details, see Remark~\ref{remark 5.3} and the proof of Theorem~\ref{thm:main2}.
	\end{remark}

	\begin{remark}
		Setting the polynomial nonlinearity $ N = 0 $ in problem \eqref{1.2} reduces it to the Cauchy problem for a linear second-order hyperbolic equation, which is still amenable to our quasi-singularity formation mechanism. Note that even in this linear case ($ N = 0 $ in \eqref{1.2}), the problem cannot be transformed into \eqref{1.1}, because \eqref{1.2} is defined on $ \mathbb{R}^d \times (0,T] $ with an anisotropic coefficient matrix $ \mathbf{A}_2(\mathbf{x}, t) $, whereas \eqref{1.1} is an initial-boundary value problem on a bounded domain with an anisotropic coefficient matrix $ \mathbf{A}_1(\mathbf{x}) $.

	\end{remark}

	\subsection{Physical motivation and existing studies}
	Understanding wave phenomena in heterogeneous media is essential for addressing numerous challenges in applied mathematics and physics. Wave propagation in heterogeneous media provides a rich physical context for the second-order hyperbolic equations in \eqref{1.1} and \eqref{1.2}, where material properties change abruptly across interfaces. These equations encompass seismic wave propagation in layered geological structures (e.g., oil reservoirs with embedded fractures) where $\mathbf{A}_1$ encodes spatially varying elasticity tensors \cite{fichtner2011full}, electromagnetic wave scattering in photonic crystals with compactly supported defects governed by $\mathbf{A}_2$ representing permittivity/permeability tensors \cite{joannopoulos2008photonic}, and acoustic wave transmission through composite materials with smooth inclusions captured by the compact-support coefficients in $D$ \cite{craster2012acoustic}. These diverse applications share the critical feature that wave dynamics remain classical (i.e., $\mathbf{A}_1,\mathbf{A}_2 = \mathbf{I}$) outside the bounded domain $D$, while exhibiting complex behavior within $D$. These important applications in physics motivate us to investigate the mathematical theory hidden behind them.
	
	Hyperbolic equations, particularly wave equations, hold a fundamental position in the theory of partial differential equations. A central theme in their study is finite-time blow-up and singularity formation. While solutions to the classical linear wave equation ($\partial_{tt}u - \Delta u = 0$) do not exhibit blow-up preserving the regularity of the initial data \cite{Evans} and propagating singularities along characteristics without amplification \cite{Hormander} the situation changes dramatically in the nonlinear setting.
	
	Nonlinear wave equations may fail to admit global solutions, as singularities can form in finite time through various blow-up mechanisms. The occurrence of such blow-up depends critically on the structure of the equation, the spatial dimension, and the nature of the nonlinearity. In what follows, we review several classical blow-up results, starting with foundational approaches that underpin the modern analysis of these phenomena.
	
	In \cite{glassey1973blowup}, semilinear wave equations of the form $\partial_{tt}u-\Delta u=f(u)$ are studied. Under the assumption that $f$ is a positive convex function, finite-time blow-up is established for both bounded domains and the Cauchy problem. The argument relies on an ODE-type blow-up criterion: by deriving a differential inequality for an integral functional of the solution, the PDE dynamics are reduced to a nonlinear ordinary differential equation that diverges in finite time. In \cite{levine1974}, an abstract blow-up framework was formulated in a Hilbert space setting for nonlinear wave equations of the form $Pu_{tt}=-Au+\mathcal{F}(u)$. The method employed there is based on a concavity argument, showing that a suitably chosen energy-related functional becomes strictly concave in time, which forces finite-time blow-up under certain sign and coercivity conditions.
	
	A landmark contribution to the field was made by John \cite{John} in the analysis of the prototypical equation  
	$$
	\partial_{tt}u-\Delta u =|u|^p
	$$  
	in three spatial dimensions. It was established that for $1<p<1+\sqrt{2}$ now known as the John exponent any nontrivial smooth initial data with compact support lead to finite-time blow-up. The underlying mechanism reveals a key insight: for these subcritical exponents, the defocusing nonlinearity dominates the dispersive effects of the wave operator, causing energy to concentrate along the characteristic cone. A central element of the proof is to show that if a solution satisfies the inequality $\partial_{tt}u-\Delta u \gtrsim |u|^p$, then it must remain entirely supported within the backward light cone $|x-x_0| \leq t_0 - t$. As this cone contracts to a point in finite time, the confinement of the solution's support forces a concentration of energy, ultimately leading to the solution becoming singular in finite time, regardless of the initial data's size. This foundational result was later extended to higher dimensions by Sideris\cite{Sideris}, who further clarified how the critical exponent for such blow-up phenomena depends on the spatial dimension.
	
	In the quasilinear setting, the blow-up mechanisms become more intricate. A celebrated result of Alinhac \cite{alinhac1999} established blow-up for small-data solutions of a certain two-dimensional quasilinear wave equation,
	$$
	\partial_{tt}u-\Delta u=u_t u_{tt}.
	$$
	Not only was finite-time blow-up proved, but a sharp asymptotic description of the solution's lifespan and profile near the blow-up point was also obtained. The lifespans theorem of \cite{alinhac1999} demonstrates that, despite having smooth initial data, the solution as it approaches the maximal existence time retains only $C^1$ regularity globally, indicating a catastrophic loss of regularity. The underlying blow-up mechanism is of the so-called “cusp-type” (for further details, see \cite{alinhac1995blowup}). This work illustrates how geometric considerations grow in importance with increasing equation complexity, thereby paving the way for more sophisticated analyses of singularity formation.
	
	Recent decades have witnessed increasingly refined techniques for understanding blow-up in critical and supercritical settings, where geometric and analytic approaches converge to reveal deeper structures. A notable geometric development comes from the work of Rodnianski and Sterbenz \cite{rodnianski2010formation}, which studied singularity formation in the critical $O(3)$ $\sigma$-model wave maps from $\mathbb{R}^{2+1}$ to $\mathbb{S}^2$. Their analysis rigorously constructed solutions that blow up in finite time through self-focusing (shrinking) of a corresponding harmonic map. The underlying mechanism involves the concentration of a rescaled harmonic map profile whose scale tends to zero as the singularity forms. This result provided a geometric model of blow-up and connected singularity formation in wave equations to the dynamics of harmonic maps. Another important direction is the renormalization method. In \cite{krieger2009slow}, Krieger, Schlag, and Tataru constructed slow blow-up solutions for the $H^1(\mathbb{R}^3)$-critical focusing semilinear wave equation  
	\begin{equation}\label{eq 1.4}
		\partial_{tt}u-\Delta u=u^5.
	\end{equation}  
	In a related work \cite{Krieger}, the same authors analyzed critical wave maps and also constructed blow-up solutions. The key mechanism relies on a renormalization procedure applied to the soliton profile $W(r)=\left(1+r^2/3\right)^{-\frac{1}{2}}$, which is the stationary solution of \eqref{eq 1.4}. For further blow-up mechanisms in nonlinear wave equations, we refer to \cite{collot2018typeII,duyckaerts2013classification,duyckaerts2011universality,kato1980,hormander1997lectures,kenig2008global,MerleZaag,Sogge,Strauss,DonningerRao2020Blowup,DonningerSchorkhuber2016} and references therein.
	
	\subsection{Main contributions and technical discussions}\label{subsection 1.3}

	Unlike the classical singularity formation mechanisms for nonlinear wave equation described above, our work establishes a novel mechanism  quasi-singularity formation in a broad class of anisotropic wave equations defined by their behavior inside and outside a bounded domain $D$. Specifically, we consider anisotropic wave equations that reduce to the classical wave equation outside $D$, while inside $D$ they governed by variable-coefficient operators and may include arbitrary polynomial nonlinearities. This mechanism represents a distinct behavior that differs fundamentally from classical blowup in both its qualitative nature and mathematical structure. Specifically, quasi-singularity formation exhibits the following precise characteristics:
	\begin{enumerate}
		\renewcommand{\labelenumi}{(\roman{enumi})}
		\item 
		For any prescribed $\mathcal{M} > 0$ and any prescribed number $n$ of distinct points $\mathbf{x}_1, \dots, \mathbf{x}_n$ on $\partial D$, we construct inputs (specifically, initial and/or boundary input) that yield solutions $u$ and $U$ to problems \eqref{1.1} and \eqref{1.2}, respectively. In the specified regions near these points $\mathbf{x}_1, \dots, \mathbf{x}_n$, the $L^\infty$-norms of $\nabla u$ and $\nabla U$ exhibit almost-blowup behavior bounded below by $\mathcal{M}$ for almost every time $t$ in the interval $[0, T]$, where $T$ can be arbitrarily chosen in advance. For the nonlinear problem \eqref{1.2}, the solution exists for an arbitrarily large time interval. This long-time existence is achieved through a combination of spectral analysis, application of the Banach fixed-point theorem, and careful tuning of the initial data. For further details, see Remark~\ref{remark 5.3} and the proof of Theorem~\ref{thm:main2}.
		
		\item 
		
		Due to the well-posedness of problems \eqref{1.1} and \eqref{1.2}, the measure of the almost-blowup regions---where the almost-blowup behavior is bounded below by $\mathcal{M}$---vanishes for almost every $t \in [0,T]$ as $\mathcal{M} \to \infty$. We provide intricate characterizations of the measure of these almost-blowup regions in Theorems~\ref{thm:main} and~\ref{thm:main2}. In this sense, the mechanism proposed in this paper is termed the \emph{quasi-singularity} approach: it induces almost-blowup behavior in the solutions $u$ and $U$ to \eqref{1.1} and \eqref{1.2} by selecting tuned inputs in specified regions near any given distinct points on the boundary of the support, which characterizes anisotropy or nonlinearity in wave propagation. The vanishing properties of these almost-blowup regions are rigorously analyzed therein. Stronger almost-blowup behavior in $u$ and $U$ corresponds to a smaller measure of the almost-blowup regions. This quantitative decay property of the measure with respect to $\mathcal{M}$ stands in contrast to classical blowup, where the singular set typically has positive measure at the blowup time. Indeed, classical blow-up studies rely on certain quantities that compromise the well-posedness of the problem by disrupting the regularity of the underlying solution and its lifespan. In contrast, in this paper, under the specified smooth and finely tuned inputs--even with the weaker regularity coefficients as specified in Items 3 and 5 of \eqref{1.1} or \eqref{1.2}, whose supports lie in a bounded Lipschitz domain--the almost-blowup solutions $u$ and $U$ to problems \eqref{1.1} and \eqref{1.2}, respectively, retain spatial $H^1$ and $H^3$ regularity for almost every $t \in [0,T]$. Moreover, within the almost-blow-up regions, the solutions maintain local spatial $C^{1,1/2}$ regularity for almost every $t \in [0,T]$, where $T > 0$ can be chosen arbitrarily.
	\end{enumerate}

	This quasi-singularity mechanism arises from the interplay between the spectral theory of transmission eigenfunctions for the interior transmission problem~\cite{ColtonKress} and Herglotz wave approximations, which uncovers a novel pathway for inducing arbitrary wave amplification---in terms of energy---near the pre-specified almost-blowup regions, with respect to the almost-blowup quantity $\mathcal{M}$. Leveraging Herglotz wave approximations and the well-posedness of \eqref{1.1} and \eqref{1.2}, we construct the requisite initial and/or boundary inputs for both the linear problem \eqref{1.1} and the nonlinear problem \eqref{1.2}; see \eqref{initial and boundary condition} and \eqref{initial condition} for details. With these inputs, we analyze the well-posedness of the associated auxiliary linear problem \eqref{remaining term} and nonlinear auxiliary problem \eqref{nonlinear hyperbolic equation with source term and zero initial condition} in Sections~\ref{sec:4} and~\ref{sec:5}, respectively. The solution to \eqref{remaining term} exhibits interior H\"older regularity, with estimates for the corresponding H\"older norm with respect to the accuracy of the Herglotz wave approximation established in Theorem~\ref{3.1}. We employ Galerkin method and elliptic interior regularity theory to prove Theorem~\ref{3.1}.  Likewise, for the nonlinear auxiliary problem \eqref{nonlinear hyperbolic equation with source term and zero initial condition}, Theorem~\ref{thm:5.2} establishes global H\"older regularity, along with lifespan estimates and bounds on the corresponding H\"older norm with respect  to the accuracy from Theorem~\ref{3.1}. Using Banach fixed theorem and energy estimates with suitable spaces, we can prove Theorem~\ref{thm:5.2}.  Finally, by integrating these results and strategically tuning the key parameters identified in Theorem~\ref{thm:main_result}, we establish the quasi-singularity formation and almost-blowup solutions for both problems \eqref{1.1} and \eqref{1.2}.

	The remainder of this paper is organized as follows. Section 2 establishes the requisite mathematical preliminaries. In Section 3, we develop a unified framework for constructing initial and boundary inputs for both problems \eqref{1.1} and \eqref{1.2}. Section 4 presents the complete proof of Theorem \ref{thm:main_result}, establishing well-posedness for the linear auxiliary problem \eqref{remaining term} and constructing almost-blowup solutions through strategic parameter tuning as quantified in the theorem. Finally, Section 5 extends this methodology to the nonlinear setting in Theorem \ref{thm:main2}, where a fixed-point argument establishes well-posedness for the auxiliary problem \eqref{nonlinear hyperbolic equation with source term and zero initial condition} and proves the analogous quasi-singular solution existence.

	\section{Preliminaries}
	
	Throughout this paper, the constant $C$ may change from line to line. In the statements of theorems, we explicitly specify the parameters on which constants depend. However, within proofs, for brevity and to avoid cumbersome notation, we typically do not detail the precise dependence of constants on various parameters. For particularly significant constants that play a crucial role in the argument, we assign specific labels such as $C_1$, $C_2$, and so forth. When the dependence on certain key parameters is essential to the analysis, we may highlight this relationship through notation such as $C_2(r_0)$ to indicate that the constant $C_2$ depends specifically on the parameter $r_0$.
	
	We adopt the following notation:
	$$
	\nabla u =\nabla_{\mathbf{x}}u=\left(\partial_{{x}_1}u,\partial_{{x}_2}u,\cdots,\partial_{{x}_d}u\right)=\left(u_{{x}_1},u_{{x}_2},\cdots,u_{{x}_d}\right),\quad \partial_{t}u=u_t=u^\prime.
	$$
	$\Re u$ and $\Im u$ denote the real and imaginary parts of $u$, respectively, and $\mathrm{i}$ denotes $\sqrt{-1}$. The floor function is denoted by $\lfloor x \rfloor$.
	
	For an open set $U \subseteq \mathbb{R}^n$, an integer $k \geq 0$, and $0 < \gamma \leq 1$, the H\" older space $C^{k,\gamma}(\bar{U})$ consists of all functions $u \in C^k(\bar{U})$ for which the norm
	$$
	\|u\|_{C^{k,\gamma}(\bar{U})} = \sum_{|\alpha| \leq k} \sup_{\bar{U}} |D^{\alpha}u| + \sum_{|\alpha| = k} [D^{\alpha}u]_{C^{\gamma}(\bar{U})}
	$$
	is finite, where $[v]_{C^{\gamma}(\bar{U})}$ denotes the $\gamma$-H\" older seminorm:
	$$
	[v]_{C^{\gamma}(\bar{U})} = \sup_{\substack{\mathbf{x},\mathbf{y} \in U \\ \mathbf{x} \neq \mathbf{y}}} \frac{|v(\mathbf{x})-v(\mathbf{y})|}{|\mathbf{x}-\mathbf{y}|^{\gamma}}.
	$$
	
	Let $1 \leq p \leq \infty$ and let $X$ be a Banach space. The Bochner–Sobolev space $W^{1,p}(0,T;X)$ consists of all functions $u \in L^p(0,T;X)$ such that the first weak derivative $u'$ exists and belongs to $L^p(0,T;X)$. The norm on $W^{1,p}(0,T;X)$ is defined as
	\[
	\|u\|_{W^{1,p}(0,T;X)} = 
	\begin{cases}
		\left(\int_0^T \|u(t)\|_X^p  dt + \int_0^T \|u'(t)\|_X^p  dt\right)^{\frac{1}{p}} & \text{if } 1 \leq p < \infty, \\
		\operatorname{ess\,sup}_{0 \leq t \leq T} \left(\|u(t)\|_X + \|u'(t)\|_X\right) & \text{if } p = \infty.
	\end{cases}
	\]
	Similarly, the Bochner Sobolev space $W^{2,p}(0,T;X)$ consists of all functions $u \in L^p(0,T;X)$ such that both the first and second weak derivatives $u'$ and $u''$ exist and belong to $L^p(0,T;X)$. The norm on $W^{2,p}(0,T;X)$ is given by
	\[
	\|u\|_{W^{2,p}(0,T;X)} = 
	\begin{cases}
		\left(\int_0^T \|u(t)\|_X^p  dt + \int_0^T \|u'(t)\|_X^p  dt + \int_0^T \|u''(t)\|_X^p  dt\right)^{\frac{1}{p}} & \text{if } 1 \leq p < \infty, \\
		\operatorname{ess\,sup}_{0 \leq t \leq T} \left(\|u(t)\|_X + \|u'(t)\|_X + \|u''(t)\|_X\right) & \text{if } p = \infty.
	\end{cases}
	\]
	Equipped with these norms, both $W^{1,p}(0,T;X)$ and $W^{2,p}(0,T;X)$ are Banach spaces.
	
	Let $J_m$ denote the Bessel function of the first kind and $j_m$ the spherical Bessel function. Let $j_{m,s}$ denote the $s$-th positive zero of $J_m$ and $j_{m,s}'$ the $s$-th positive zero of $J_m'$. According to \cite{Abramowitz} and \cite{LiuZou}, the following properties hold:
	\begin{equation*}
		m \leq j_{m,1}' < j_{m,1} < j_{m,2}' < j_{m,2} < j_{m,3}' < \cdots.
	\end{equation*}
	The Bessel function admits the Weierstrass factorization:
	\begin{equation*}\label{eq:bessel_factorization}
		J_m(x) = \frac{(|x|/2)^m}{\Gamma(m+1)} \prod_{s=1}^{\infty} \left(1 - \frac{x^2}{j_{m,s}^2}\right).
	\end{equation*}
	For integer or positive $\alpha$, the Bessel function has the power series expansion:
	\begin{equation}\label{eq:bessel_series}
		J_\alpha(x) = \sum_{m=0}^{\infty} \frac{(-1)^m}{m!\,\Gamma(m+\alpha+1)} \left(\frac{x}{2}\right)^{2m+\alpha}, \quad x > 0,
	\end{equation}
	where $\Gamma(x)$ is the gamma function. The relationship between $J_m$ and $j_m$ is given by:
	\begin{equation}\label{eq:bessel_spherical}
		j_m(x) = \sqrt{\frac{\pi}{2x}} J_{m+\frac{1}{2}}(x).
	\end{equation}
	The spherical harmonics $Y_m^l(\theta,\varphi)$ (see \cite[Theorem 2.8]{ColtonKress}) are defined as:
	\begin{equation}\label{eq:spherical_harmonics}
		Y_m^l(\theta, \varphi) = \sqrt{\frac{2m+1}{4\pi} \frac{(m-|l|)!}{(m+|l|)!}} P_m^{|l|}(\cos \theta) e^{i m \varphi},
	\end{equation}
	where $m = 0,1,2,\ldots$, $l = -m,\ldots,m$, and $P_m^{|l|}$ are the associated Legendre functions. The following estimate for Legendre functions will be used in the subsequent analysis.
	\begin{lemma}\label{lem:legendre_bound}
		(see \cite{Lohofer}, Corollary 3). For real $x \in [-1,1]$ and integers $m$, $n$ with $1 \leq m \leq n$,
		\begin{equation}\label{eq:legendre_bound}
			\frac{1}{\sqrt{2.22(m+1)}} < \max_{x \in [-1,1]} |P_n^m(x)| \sqrt{\frac{(n-m)!}{(n+m)!}} < \frac{2^{5/4}}{\pi^{3/4}} \frac{1}{m^{1/4}}.
		\end{equation}
	\end{lemma}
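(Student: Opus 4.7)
The plan is to establish the two-sided bound on $M_{n,m} := \max_{x\in[-1,1]} |P_n^m(x)|\sqrt{(n-m)!/(n+m)!}$, which is naturally the sup-norm of the $\theta$-factor of the $L^2$-normalized spherical harmonic defined in~\eqref{eq:spherical_harmonics}. The two inequalities require rather different tools, so I would attack them separately.

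For the lower bound $M_{n,m} > 1/\sqrt{2.22(m+1)}$, the starting point is the classical orthonormality identity $\int_{-1}^1 [P_n^m(x)]^2\,dx = \frac{2}{2n+1}\cdot \frac{(n+m)!}{(n-m)!}$, which, after normalization, reads $\int_{-1}^1 [Q_n^m(x)]^2\,dx = \frac{2}{2n+1}$, where $Q_n^m$ denotes the normalized function whose maximum is $M_{n,m}$. The naive estimate $M_{n,m}^2 \geq 1/(2n+1)$ is too weak because the target bound has no $n$ dependence but instead carries $m$. The refinement should come from localizing the $L^2$ mass to the effective support of $Q_n^m$: since $P_n^m$ contains the factor $(1-x^2)^{m/2}$ and its oscillatory region is controlled by the turning points of the associated Legendre ODE, the effective support has length $O((n-m+1)/n)$, and an $L^\infty$--$L^2$ comparison on this subinterval converts the mass estimate into $M_{n,m}^2 \gtrsim 1/(m+1)$. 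Evaluating at a critical point of $P_n^m$, where the Legendre differential equation yields an algebraic link between $P_n^m$ and $(P_n^m)''$, should allow one to sharpen the implicit constant to the explicit $2.22$.

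For the upper bound $M_{n,m} < \tfrac{2^{5/4}}{\pi^{3/4}} m^{-1/4}$, the natural route is via the relation of $P_n^m(\cos\theta)$ to the Jacobi polynomial $P_{n-m}^{(m,m)}(\cos\theta)$ multiplied by $\sin^m\theta$, whereby the question reduces to a Szegő-type sup-norm estimate for normalized Jacobi polynomials. Such estimates scale as $m^{-1/4}$, and the sharp constant is most cleanly obtained through the Liouville--Stieltjes (Pr\"ufer) normal form of the associated Legendre equation, where the amplitude of the resulting oscillator is explicitly computable. Alternatively, one can start from Laplace's integral representation
\[
P_n^m(\cos\theta) = \frac{(-\mathrm{i})^m (n+m)!}{2\pi\, n!} \int_0^{2\pi} \left[\cos\theta + \mathrm{i}\sin\theta\cos\phi\right]^n e^{-\mathrm{i} m\phi}\,d\phi,
\]
and apply the stationary-phase method in $\phi$: a Gaussian window of width $O(m^{-1/2})$ opens around the stationary points, yielding the $m^{-1/4}$ factor after taking square roots, while the factorial prefactors cancel against $\sqrt{(n-m)!/(n+m)!}$.

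The main obstacle, I anticipate, is not the qualitative scaling in $m$ and $n$---the Jacobi-polynomial and stationary-phase arguments deliver this essentially automatically---but rather securing the explicit numerical constants $2^{5/4}/\pi^{3/4}$ and $2.22$, and, crucially, verifying the inequalities uniformly across the entire range $1\leq m\leq n$ rather than only in the asymptotic regime $n\to\infty$ with $m$ fixed. This uniformity forces one to track carefully the subleading Stirling corrections when passing between $P_n^m$ and $Q_n^m$, and to handle separately the boundary cases $m=n$ and $m\ll n$, where the dominant balance in the stationary-phase expansion shifts.
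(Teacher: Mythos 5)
The paper does not actually prove this lemma---it cites Löhfer's Corollary 3 and moves on. So there is no in-paper argument to compare against; the question is only whether your sketch is sound, and the lower bound has a genuine gap.

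You claim the effective support of the normalized function $Q_n^m$ has length $O((n-m+1)/n)$ and that an $L^\infty$--$L^2$ comparison then yields $M_{n,m}^2\gtrsim 1/(m+1)$. Even granting the support estimate, the arithmetic gives
\[
M_{n,m}^2 \;\gtrsim\; \frac{2/(2n+1)}{(n-m+1)/n}\;\approx\; \frac{1}{\,n-m+1\,},
\]
which is \emph{not} $\gtrsim 1/(m+1)$ when $m\ll n$. Worse, the support estimate itself is off: the oscillatory region of the associated Legendre ODE is $\{\,|x|<\sqrt{1-m^2/(n(n+1))}\,\}$, whose Lebesgue measure is $\approx 2\sqrt{1-(m/n)^2}=O(1)$ for $m\ll n$, so the naive mass-spreading comparison only gives $M_{n,m}^2\gtrsim 1/n$, far weaker than the $n$-independent target $1/(2.22(m+1))$. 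The reason the true maximum does not decay with $n$ is that $Q_n^m(\cos\theta)$ is sharply peaked in an Airy-type transition zone near the turning point $\theta_c\approx\arcsin(m/n)$; the averaged $L^2$ argument misses this concentration entirely. One already sees the discrepancy at $m=1$: $M_{n,1}$ stays near $0.6$ for all $n$ (indeed it must stay above $1/\sqrt{4.44}\approx 0.475$ by the lemma), yet $\|Q_n^1\|_{L^2}^2=2/(2n+1)\to 0$. Any correct proof of the lower bound must therefore isolate the turning-point contribution, for example through the WKB/Airy matching or an explicit integral representation, rather than a global $L^2$--$L^\infty$ comparison. The upper-bound sketch (Jacobi-polynomial Bernstein estimates or stationary phase in the Laplace integral) does deliver the correct $m^{-1/4}$ scaling, but, as you acknowledge, it does not secure the explicit constant $2^{5/4}\pi^{-3/4}$ or uniformity over $1\le m\le n$---and since that explicit two-sided numerical control is precisely the content of Löhfer's corollary (the qualitative $m^{-1/4}$ rate being classical), the proposal as written does not establish the stated inequality.
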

	The Herglotz wave function is defined as
	\begin{equation}\label{Herglotz}
		H_{g}(\mathbf{x})=\int_{\mathbb S^{d-1}} g(\theta) \exp(\mathrm{i} \omega \mathbf{x} \cdot \theta) d\theta, \quad \mathbf{x} \in \mathbb{R}^d,
	\end{equation}
	where $g \in L^2(\mathbb{S}^{d-1})$ is the Herglotz kernel. We now state a key lemma that will be used in the next section.
	
	\begin{lemma}(see~\cite{Weck})\label{Herglotz lemma}
		Let $\mathcal{O}$ be a bounded domain of class $C^{\alpha,1}$, $\alpha \in \mathbb{N} \cup \{0\}$, in $\mathbb{R}^d$. Denote by $\mathbb{H}$ the space of all Herglotz wave functions of the form (\ref{Herglotz}). Define
		\[
		\mathbb{H}(\mathcal{O}) := \left\{\left.u\right|_{\mathcal{O}} : u \in \mathbb{H}\right\}
		\]
		and
		\begin{equation}\label{wave number}
			\mathfrak{H}(\mathcal{O}) := \left\{u \in C^{\infty}(\mathcal{O}) : \Delta u + \omega^2 u = 0 \text{ in } \mathcal{O}\right\}.
		\end{equation}
		Then $\mathbb{H}(\mathcal{O})$ is dense in $\mathfrak{H}(\mathcal{O}) \cap H^{\alpha+1}(\mathcal{O})$ with respect to the $H^{\alpha+1}(\mathcal{O})$-norm.
	\end{lemma}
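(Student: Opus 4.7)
The plan is to argue by duality using the Hahn--Banach theorem. Suppose for contradiction that $\mathbb{H}(\mathcal{O})$ fails to be dense in $\mathfrak{H}(\mathcal{O}) \cap H^{\alpha+1}(\mathcal{O})$ with respect to the $H^{\alpha+1}(\mathcal{O})$-norm. Then there exists a continuous linear functional $L$ on $H^{\alpha+1}(\mathcal{O})$ that annihilates every restriction in $\mathbb{H}(\mathcal{O})$, yet does not vanish identically on $\mathfrak{H}(\mathcal{O}) \cap H^{\alpha+1}(\mathcal{O})$. By standard Sobolev duality, $L$ can be represented by a compactly supported distribution $T$ in $\overline{\mathcal{O}}$ of order at most $\alpha+1$, so that $L(u) = \langle T, u\rangle$ in the appropriate pairing.

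Next I would translate the vanishing condition $L(H_g)=0$ for every $g\in L^2(\mathbb{S}^{d-1})$ into a Fourier-side statement. Interchanging the pairing with the defining integral of $H_g$ (legitimate because $T$ has compact support and the plane waves $e^{\mathrm{i}\omega \mathbf{x}\cdot\theta}$ are smooth in $\mathbf{x}$) yields
\begin{equation*}
\int_{\mathbb{S}^{d-1}} g(\theta)\,\widehat{T}(-\omega\theta)\,d\theta = 0 \quad \text{for all } g\in L^2(\mathbb{S}^{d-1}),
\end{equation*}
which forces $\widehat{T}(\xi)\equiv 0$ on the sphere $\{|\xi|=\omega\}$. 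Because $T$ is compactly supported, the Paley--Wiener theorem shows that $\widehat{T}$ is entire of exponential type, and the vanishing on the sphere admits a factorization $\widehat{T}(\xi) = (|\xi|^2-\omega^2)\widehat{S}(\xi)$ with $\widehat{S}$ entire of the same exponential type. Inverting the Fourier transform produces a compactly supported distribution $S$ satisfying $T = (\Delta+\omega^2)S$ in $\mathbb{R}^d$.

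The crucial step is then to verify that one may take $\operatorname{supp} S \subset \overline{\mathcal{O}}$, after which, for any $u \in \mathfrak{H}(\mathcal{O}) \cap H^{\alpha+1}(\mathcal{O})$, the identity
\begin{equation*}
L(u) = \langle T, u \rangle = \langle (\Delta+\omega^2)S, u\rangle = \langle S, (\Delta+\omega^2)u\rangle = 0
\end{equation*}
follows from $(\Delta+\omega^2)u=0$ in $\mathcal{O}$ together with the support condition on $S$. This contradicts the choice of $L$ and completes the proof.

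The main obstacle I anticipate is precisely the support-localization step: dividing $\widehat{T}$ by $|\xi|^2-\omega^2$ is unproblematic at the level of entire functions, but ensuring $\operatorname{supp} S \subset \overline{\mathcal{O}}$ when $\mathcal{O}$ is not convex is delicate. This is where the boundary regularity $\partial\mathcal{O}\in C^{\alpha,1}$ enters, combined with a Runge-type approximation theorem for the Helmholtz operator (or, equivalently, a mollification-and-truncation argument tailored to the convex hull versus the domain hull) that replaces $S$ by an equivalent compactly supported distribution inside $\overline{\mathcal{O}}$ of the appropriate Sobolev order. One must also verify that the final integration-by-parts identity is justified in the $H^{\alpha+1}$ duality pairing, which is what matches the order assumption $\alpha+1$ on $T$ with the Helmholtz constraint on $u$.
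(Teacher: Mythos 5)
The paper does not supply a proof of this lemma; it is quoted verbatim from Weck~\cite{Weck}, so there is no in-paper argument to compare yours against. Judged on its own terms, your proposal reproduces the correct high-level strategy (Hahn--Banach duality followed by Fourier analysis and factorization through the Helmholtz symbol $|\xi|^2-\omega^2$), which is indeed the standard route to Herglotz-density theorems going back to Colton--Kress in $L^2$ and to Weck in higher-order Sobolev norms. However, the two steps you hand-wave are precisely where the real mathematical content lies, and your diagnoses of what makes them work are not right.

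First, the support-localization step is not a ``Runge-type approximation'' issue and has nothing to do with the boundary regularity $C^{\alpha,1}$. Once you have $T=(\Delta+\omega^2)S$ with $S$ compactly supported (Paley--Wiener), the correct argument is unique continuation: in the open set $\mathbb{R}^d\setminus\overline{\mathcal{O}}$ one has $(\Delta+\omega^2)S=0$, so $S$ is a real-analytic Helmholtz solution there; being compactly supported, $S$ vanishes on a neighborhood of infinity; hence, \emph{provided $\mathbb{R}^d\setminus\overline{\mathcal{O}}$ is connected}, $S\equiv 0$ outside $\overline{\mathcal{O}}$ by analytic continuation. That connectivity hypothesis is indispensable and is silently used in Weck's theorem; without it the division step can produce an $S$ whose support fills a bounded complementary component of $\mathbb{R}^d\setminus\mathcal{O}$, and the argument collapses. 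Your proposal neither states the hypothesis nor supplies the unique-continuation mechanism, and the appeal to $C^{\alpha,1}$ regularity at this point is a misattribution: that smoothness is used earlier, to identify $(H^{\alpha+1}(\mathcal{O}))^*$ with distributions of the right Sobolev order supported in $\overline{\mathcal{O}}$ via an extension operator, not to localize the support of $S$.

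Second, the ``final integration-by-parts'' is treated as a formality, but it is genuinely delicate. The pairing $\langle T,u\rangle = \langle(\Delta+\omega^2)S,\tilde u\rangle = \langle S,(\Delta+\omega^2)\tilde u\rangle$ is taken against a Sobolev \emph{extension} $\tilde u$ of $u$ to $\mathbb{R}^d$, and $(\Delta+\omega^2)\tilde u$ need not vanish on a neighborhood of $\operatorname{supp}S$ when $\operatorname{supp}S$ touches $\partial\mathcal{O}$. One must verify that the division by $|\xi|^2-\omega^2$ gains exactly two Sobolev orders, so that $S$ lies in the dual of $H^{\alpha-1}(\mathcal{O})$, and that the resulting pairing is insensitive to the choice of extension; this is where the $C^{\alpha,1}$ regularity actually earns its keep, via the associated extension and trace theory. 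Likewise, ``dividing $\widehat T$ by $|\xi|^2-\omega^2$ is unproblematic at the level of entire functions'' is too glib for $d\geq 2$: one first passes from vanishing on the real sphere to vanishing on the complex quadric $\{\zeta\cdot\zeta=\omega^2\}$ (a totally-real-uniqueness argument), and then invokes a Malgrange--Ehrenpreis-type division theorem to control the exponential type and polynomial growth of $\widehat S$. None of these are fatal objections to the strategy, but as written the proposal has a genuine gap: it does not identify the exterior-connectivity hypothesis or supply the unique-continuation argument on which the support localization actually rests.
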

	
	\begin{remark}\label{remark 2.3}
		The Herglotz wave function $H_g$ defined in \eqref{Herglotz} satisfies the Helmholtz equation $\Delta H_g + \omega^2 H_g = 0$ in $\mathbb{R}^d$. By standard elliptic regularity theory, $H_g$ is analytic with respect to the spatial variable. A key property of Herglotz wave functions is that they are generally not square-integrable over the entire space; specifically, $H_g \in L^{2}_{\text{loc}}(\mathbb{R}^d)$ but $H_g \notin L^{2}(\mathbb{R}^d)$ for non-trivial kernels $g$. 
	\end{remark}

	\section{A general framework for specifying inputs}\label{section 3}
	
	In this section, we establish the general framework for constructing initial and boundary inputs for problems \eqref{1.1} and \eqref{1.2}. To achieve this, we employ spectral analysis, interior transmission problems, and Herglotz approximation techniques to construct a function $u_0$ that will serve as the foundation for our desired inputs.
	
	\begin{theorem}\label{thm:main_result}
		For any predetermined positive parameters $\varepsilon < 1$, $r_0$ sufficiently small, and $n$ distinct points $\mathbf{x}_1,\dots,\mathbf{x}_n \in \partial D$, we can properly choose corresponding $n$ balls satisfying:
		\begin{enumerate}
			\renewcommand{\labelenumi}{(\roman{enumi})}
			\item For problem \eqref{1.1} $($bounded domain $\Omega$ with $D \Subset \Omega):$ $B_{r_0}(\mathbf{y}_i) \Subset \Omega \setminus \overline{D}$,
			\item For problem \eqref{1.2} $($full space $\mathbb{R}^d ):$ $B_{r_0}(\mathbf{y}_i) \Subset \mathbb{R}^d \setminus \overline{D}$,
		\end{enumerate}
		such that $|\mathbf{x}_i - \mathbf{y}_i| = 2r_0$, the balls are mutually disjoint and
		$$
		\operatorname{dist}(\partial D, B_{r_0}(\mathbf{y}_i))>\frac{r_0}{2},\ \forall i=1,2...,n.
		$$
		For any fixed positive constant $\omega$ described in Item 10 in \eqref{1.1} or \eqref{1.2},  there exists an integer $M := M(r_0,\omega)$ and a smooth function $u_0$ with the following properties:
		\begin{enumerate}
			\item $u_0$ has the form$:$
			\begin{equation*}
				u_0(\mathbf{x},t) = H_g(\mathbf{x})\exp(\mathrm{i}\omega t),
			\end{equation*}
			where $ H_g(\mathbf{x})$ is a Herglotz wave function. 
			
			\item $u_0$ satisfies the wave equation$:$
			\[
			\partial_{tt}^2 u_0 - \Delta u_0 = 0 \quad \text{in } \mathbb{R}^d \times [0,\infty).
			\]
			
			\item It exhibits small amplitude in $\overline{D}:$
			\begin{equation}\label{u0 smalles}
				|u_0(\mathbf{x},t)| < C_1\varepsilon \quad \text{for } (\mathbf{x},t) \in \overline{D} \times [0,\infty),
			\end{equation}
			where $C_1$ depends on $D$ and $d$.
			
			\item For any predetermined integer $m \geqslant M(r_0,\omega)$, there exist points $\mathbf{y}^*_i \in \partial B_{r_0}(\mathbf{y}_i)$  ($i=1,\dots,n$) with large amplitude $u_0(\mathbf{y}^*_i,t)$ such that
			\begin{align}
				|u_0(\mathbf{y}^*_i,t)| &> \frac{\sqrt{2m+2}\,r_0^{-1}}{2\sqrt{2\pi}} - C_2\varepsilon, & d &= 2, \label{2 d} \\
				|u_0(\mathbf{y}^*_i,t)| &> \frac{\sqrt{2m+3}\,r_0^{-3/2}}{16} - C_2\varepsilon, & d &= 3, \label{3 d}
			\end{align}
			for all $t \in [0,\infty)$, where $C_2$ depends on $r_0$, $D$, and $d$.
		\end{enumerate}
	\end{theorem}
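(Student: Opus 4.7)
The plan is to take \(u_0(\mathbf{x},t)=H_g(\mathbf{x})\exp(\mathrm{i}\omega t)\), where the Herglotz wave function \(H_g\) approximates a piecewise Helmholtz target that vanishes on a neighborhood of \(\overline{D}\) and coincides with a normalized Bessel–spherical-harmonic mode of order \(m\) on each ball \(B_{r_0}(\mathbf{y}_i)\). With this ansatz, item (1) is the definition, and item (2) is automatic because \(\Delta H_g+\omega^{2}H_g=0\) gives \(\partial_{tt}u_0-\Delta u_0=(-\omega^{2}+\omega^{2})H_g e^{\mathrm{i}\omega t}=0\). For the geometry, place \(\mathbf{y}_i\) along an outward direction at \(\mathbf{x}_i\in\partial D\) with \(|\mathbf{x}_i-\mathbf{y}_i|=2r_0\); using the Lipschitz regularity of \(\partial D\) and a sufficiently small \(r_0\) yields \(B_{r_0}(\mathbf{y}_i)\Subset\Omega\setminus\overline{D}\), pairwise disjointness of the balls, and \(\operatorname{dist}(\partial D,B_{r_0}(\mathbf{y}_i))>r_0/2\). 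Fix a \(C^{1,1}\) neighborhood \(U_0\) of \(\overline{D}\) and disjoint auxiliary \(C^{1,1}\) domains \(\mathcal{O}_i\supset\overline{B_{r_0}(\mathbf{y}_i)}\).

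For fixed \(m\ge M(r_0,\omega)\), define the target on \(\mathcal{O}_i\) by
\[
v_i(\mathbf{x})=c_{m,d}\,Z_m(\omega|\mathbf{x}-\mathbf{y}_i|)\,\Phi_m\bigl(\widehat{\mathbf{x}-\mathbf{y}_i}\bigr),
\]
with \((Z_m,\Phi_m)=(J_m,\,e^{\mathrm{i}m\theta})\) for \(d=2\) and \((j_m,\,Y_m^m)\) for \(d=3\), the sectoral harmonic \(Y_m^m\) being rotated so that its pointwise maximum is attained in the direction \(\widehat{\mathbf{y}_i^*-\mathbf{y}_i}\) for a prescribed \(\mathbf{y}_i^*\in\partial B_{r_0}(\mathbf{y}_i)\); normalize \(c_{m,d}\) so that \(\|v_i\|_{L^{2}(B_{r_0}(\mathbf{y}_i))}=1\). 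Using the small-argument series \eqref{eq:bessel_series}, a direct integration gives the leading-order asymptotics
\[
c_{m,2}\sim\frac{2^{m}m!}{\omega^{m}}\sqrt{\frac{2m+2}{2\pi\,r_0^{2m+2}}},\qquad c_{m,3}\sim\frac{(2m+1)!!}{\omega^{m}}\cdot\frac{\sqrt{2m+3}}{r_0^{m+3/2}},
\]
whence \(|v_i(\mathbf{y}_i^*)|\) has leading value \(\sqrt{2m+2}\,(r_0\sqrt{2\pi})^{-1}\) in two dimensions and \(\sqrt{2m+3}\,r_0^{-3/2}\,|Y_m^m|_{\max}\) in three dimensions. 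I would pick \(M(r_0,\omega)\) so large that the higher-order Bessel corrections deflate these leading values by at most a factor of two; combined with the sectoral lower bound \(|Y_m^m|_{\max}\ge \sqrt{(2m+1)/(4\pi\cdot 2.22(m+1))}\) from Lemma~\ref{lem:legendre_bound} (which is bounded below by a universal constant independent of \(m\)), this yields the explicit constants \(1/(2\sqrt{2\pi})\) and \(1/16\) appearing in \eqref{2 d} and \eqref{3 d}.

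The technical core is to build a single entire Herglotz wave \(H_g\) that simultaneously approximates \(0\) on \(U_0\) and \(v_i\) on each \(\mathcal{O}_i\) in the \(H^{\alpha+1}\) norm, where \(\alpha+1>d/2\). Lemma~\ref{Herglotz lemma} as stated gives density of \(\mathbb{H}(\mathcal{O})\) in \(\mathfrak{H}(\mathcal{O})\cap H^{\alpha+1}(\mathcal{O})\) only for a connected domain \(\mathcal{O}\), so I would bridge the disconnected setting in two steps: first apply a Runge-type density for Helmholtz solutions (possible because \(\mathbb{R}^{d}\setminus(\overline{U_0}\cup\bigcup_i\overline{\mathcal{O}_i})\) is connected) to approximate the piecewise target by a single Helmholtz solution \(\tilde v\) on a larger connected \(C^{1,1}\) domain \(\mathcal{O}\supset U_0\cup\bigcup_i\mathcal{O}_i\); then apply Lemma~\ref{Herglotz lemma} to approximate \(\tilde v\) by \(H_g\). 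Both approximations can be made arbitrarily accurate, and Sobolev embedding \(H^{\alpha+1}\hookrightarrow C^{0}\) (with constants depending on \(D\), \(d\), and \(r_0\)) converts them to pointwise bounds \(\|H_g\|_{L^{\infty}(\overline D)}<C_1\varepsilon\) and \(\|H_g-v_i\|_{L^{\infty}(\overline{\mathcal{O}_i})}<C_2\varepsilon\). Items (3) and (4) then follow: \(|u_0(\mathbf{x},t)|=|H_g(\mathbf{x})|<C_1\varepsilon\) on \(\overline D\), while the triangle inequality \(|u_0(\mathbf{y}_i^*,t)|\ge|v_i(\mathbf{y}_i^*)|-|H_g(\mathbf{y}_i^*)-v_i(\mathbf{y}_i^*)|\) yields the bounds \eqref{2 d} and \eqref{3 d}.

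The main obstacle I anticipate is this simultaneous Herglotz approximation across the disconnected target domain; once the Runge-type bridge is secured, the rest reduces to explicit Bessel and spherical-harmonic computations together with classical Sobolev embedding. The threshold \(M(r_0,\omega)\) is fixed purely by demanding that the small-argument Bessel asymptotics be accurate on the relevant compact sets, giving an explicit dependence on \(\omega\) and \(r_0\) (essentially \(M\gg\omega\cdot\max\{r_0,\operatorname{diam}(D)\}\)).
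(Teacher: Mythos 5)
Your proposal follows essentially the same route as the paper: construct a normalized Bessel/spherical-harmonic mode \(v\) on each \(B_{r_0}(\mathbf{y}_i)\), estimate its boundary amplitude via small-argument Bessel expansions (the paper packages the error terms as the series \(I_1,\dots,I_4\) in Lemma~\ref{I_1,I_2,I_3,I_4}) and the sectoral Legendre bound of Lemma~\ref{lem:legendre_bound}, extend by zero across a smooth neighborhood of \(\overline D\), apply the Weck density lemma, then use Sobolev embedding. The paper additionally frames \(v\) as one member of a transmission eigenfunction pair \((w,v)\) for~\eqref{Interior Transmission Problem}, but as you correctly implicitly noticed, only \(v\) matters for the Herglotz approximation, so dropping \(w\) changes nothing.

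The one point where you diverge is the treatment of Lemma~\ref{Herglotz lemma} on the disconnected set \(\mathcal O = D_{r_0/2}\cup\bigcup_i B_{r_0}(\mathbf y_i)\). You read ``domain'' as ``connected'' and insert an intermediate Runge-type approximation to a connected superset before invoking the lemma. The paper applies the lemma directly to the disconnected union; this is legitimate because Weck's result holds for bounded open sets of class \(C^{\alpha,1}\) whose closure has connected complement (the finite union of smooth disjoint components satisfies both), so your extra bridging step is not needed, though it is not wrong. A practical caution with your version: you approximate only in \(H^{\alpha+1}\) with \(\alpha+1>d/2\), which suffices for the pointwise bounds of Theorem~\ref{thm:main_result} itself, but the paper deliberately approximates in \(H^5\) (see~\eqref{eq3.14}) because Sections~4 and~5 later need control of up to third-order spatial derivatives of \(H_g\) on \(D\) (cf.~\eqref{estimate on F_1} and Lemma~\ref{Lemma 5.1}). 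If you adopted your weaker Sobolev index, the statement of Theorem~\ref{thm:main_result} would still hold, but the particular \(u_0\) you produce would no longer feed cleanly into the subsequent estimates. Otherwise the proposal is correct.
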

	
	\begin{remark}\label{cf 3.5}
		From now and remaining of this paper, we denote by $\varepsilon$, $r_0$ the parameters, and by $u_0$ the function, specified in Theorem \ref{thm:main_result}.
	\end{remark}
	
	\begin{remark}\label{remark 3.2}
		For notation convenience, we denote $C_2 := C_2(r_0)$. In particular, by \eqref{important estimate for H_g}, \eqref{3.17}, and the Sobolev embedding theorem, $C_2 \to \infty$ as $r_0 \to 0$. The constants $C_1$ and $C_2$ will be used to determine $\varepsilon$, $r_0$, and $m$ in the proofs of Theorems \ref{thm:main} and \ref{thm:main2}.
	\end{remark}
	
	\begin{remark}
		The construction distinguishes between two physical settings:
		\begin{enumerate}
			\renewcommand{\labelenumi}{(\roman{enumi})}
			\item For the linear problem \eqref{1.1} in the bounded domain $\Omega$ (with $D \Subset \Omega$), the balls satisfy $B_{r_0}(\mathbf{y}_i) \Subset \Omega \setminus \overline{D}$.
			\item For the nonlinear problem \eqref{1.2} in the full space $\mathbb{R}^d$, the balls satisfy $B_{r_0}(\mathbf{y}_i) \Subset \mathbb{R}^d \setminus \overline{D}$.
		\end{enumerate}
		Importantly, the selection of these balls is highly flexible they can be chosen arbitrarily within the respective domains, subject only to the above containment conditions. Despite these domain differences, the constructed $u_0$ shares identical properties in both cases. For simplicity, we do not distinguish between the corresponding $u_0$ functions in subsequent analysis.
	\end{remark}
	
	Prior to proving Theorem \ref{thm:main_result}, we establish the following auxiliary result:
	\begin{lemma}\label{I_1,I_2,I_3,I_4}
		For any fixed $r_0$, the series $I_{i}(m,\omega,r_0)\rightarrow 0$ as $m \rightarrow \infty$ for $i=1,2,3,4$, where
		\begin{align*}
			I_1(m,\omega ,r_0) &:= \sum_{k=1}^{\infty}\frac{(-1)^k\Gamma(m+\frac{3}{2})\sqrt{2m+3} }{k!\Gamma(m+k+\frac{3}{2})}\left(\frac{\omega r_0}{2}\right)^{2k}r_0^{-\frac{3}{2}}, \\
			I_2(m,\omega,r_0) &:= \sum_{k=1}^{\infty}\sum_{k_1+k_2=k}\left(\frac{\Gamma^2(m+\frac{3}{2})}{k_1!k_2!\Gamma(m+k_1+\frac{3}{2})\Gamma(m+k_2+\frac{3}{2})}\right)  \\
			& \qquad \times(-1)^k\left(\frac{\omega r_0}{2}\right)^{2k}\frac{2m+3}{2m+2k+3}, \\
			I_3(m,\omega,r_0) &:= \sum_{k=1}^{\infty}\frac{(-1)^k}{k!}\frac{\Gamma(m+1)\sqrt{2m+2}}{\Gamma(m+k+1)}\left(\frac{\omega r_0}{2}\right)^{2k}r^{-1}_0, \\
			I_4(m,\omega,r_0) &:= \sum_{k=1}^{\infty}\left(\sum_{k_1+k_2=k}\frac{\Gamma^{2}(m+1)}{k_1!k_2!\Gamma(m+k_1+1)\Gamma(m+k_2+1)}\right) \nonumber \\
			& \qquad \times(-1)^k\left(\frac{\omega r_0}{2}\right)^{2k}\frac{2m+2}{2m+2k+2}.
		\end{align*}
		\begin{proof}
			Set $x=\omega r_0$ and $\nu=m+\frac{1}{2}$, so that $\Gamma(m+k+\frac{3}{2})=\Gamma(\nu+k+1)$ and $\sqrt{2m+3}=\sqrt{2\nu+2}$. Using \eqref{eq:bessel_series}, we obtain
			\begin{equation}\label{I_1 1}
				I_1=\sqrt{2\nu +2} r_0^{-\frac{3}{2}}[\Gamma(\nu+1)(\frac{x}{2})^{-\nu}J_{\nu}(x)-1].
			\end{equation}
			Let $\beta=(\frac{x}{2})^2$ and $\gamma_k(\nu)=\frac{1}{(\nu+k)(\nu+k-1)\cdots(\nu+1)}$. By the Gamma function recurrence and \eqref{eq:bessel_series},
			\begin{equation}\label{I_1 2}
				\begin{aligned}
					|\Gamma(\nu+1)(\frac{x}{2})^{-\nu}J_{\nu}(x)-1|&=|\sum_{k=1}^{\infty}\frac{(-1)^k}{k!}\beta^k\gamma_k(\nu)|\\
					&\leqslant\exp(\frac{\beta}{\nu})-1\leqslant \frac{\beta}{\nu}\exp(\frac{\beta}{\nu})\leqslant \frac{\beta}{\nu}\exp(\beta),
				\end{aligned}
			\end{equation}
			where we used $|\gamma_k(\nu)|\leqslant \nu^{-k}$. Combining \eqref{I_1 1} and \eqref{I_1 2} yields
			\begin{equation*}
				|I_1|\leqslant \frac{\sqrt{2\nu +2}}{\nu} r_0^{-\frac{3}{2}}\beta\exp(\beta) =\frac{\sqrt{2m +3}}{m+\frac{1}{2}} r_0^{-\frac{3}{2}}(\frac{\omega r_0}{2})^2\exp(\frac{\omega r_0}{2})^2\rightarrow 0 
			\end{equation*}
			as $m \rightarrow \infty$.
			
			For $I_2$, rewrite it as
			\[
			\begin{aligned}
				I_2\left(m, \omega, r_0\right)=(2 m+3) \sum_{k=1}^{\infty} \frac{(-1)^k\left(\omega r_0 / 2\right)^{2 k}}{2 m+2 k+3} \cdot S_k(m),
			\end{aligned}
			\]
			where 
			\[
			S_k(m)=\sum_{\substack{k_1+k_2=k \\ k_1, k_2 \geq 0}} \frac{1}{k_{1}!k_{2}!} \cdot \frac{\Gamma^2\left(m+\frac{3}{2}\right)}{\Gamma\left(m+k_1+\frac{3}{2}\right) \Gamma\left(m+k_2+\frac{3}{2}\right)}.
			\]
			For fixed $j \geq 0$, the Gamma recurrence implies
			\[
			\frac{\Gamma\left(m+\frac{3}{2}\right)}{\Gamma\left(m+j+\frac{3}{2}\right)} \leq \frac{1}{\left(m+\frac{3}{2}\right)^j}.
			\]
			Applying this to $S_k(m)$ and noting $\sum_{k_1+k_2=k} \frac{1}{k_{1}!k_{2}!}=\frac{2^k}{k!}$, we get
			\[
			S_k(m) \leq \frac{2^k}{k!\left(m+\frac{3}{2}\right)^k}.
			\]
			Thus,
			\begin{equation*}
				\begin{aligned}
					\left|I_2(m)\right| &\leq(2 m+3) \sum_{k=1}^{\infty} \frac{1}{2 m+2 k+3} \cdot \frac{\left(\omega^2 r_0^2 / 2\right)^k}{k!\left(m+\frac{3}{2}\right)^k}\\
					&\leqslant \sum_{k=1}^{\infty} \frac{\left(\omega^2 r_0^2 / 2\right)^k}{k!\left(m+\frac{3}{2}\right)^k}=\exp(\frac{\omega^2 r_0^2}{2m+3})-1\rightarrow 0
				\end{aligned}
			\end{equation*}
			as $m \rightarrow \infty$. The same approach applies to $I_3$ and $I_4$, so their proofs are omitted.
			
			The proof is complete.
		\end{proof}
	\end{lemma}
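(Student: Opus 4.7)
My plan is to exploit the observation that each $I_i$ is essentially the tail (minus the $k=0$ term) of a power series directly related to the Bessel function expansion \eqref{eq:bessel_series}, with the decisive small factor being the Gamma-ratio $\Gamma(m+a)/\Gamma(m+k+a) = O(m^{-k})$. The polynomial prefactors such as $\sqrt{2m+3}\,r_0^{-3/2}$ are harmless once one extracts one full power of $1/m$ from the sum.

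For $I_1$, I would set $\nu = m + \tfrac{1}{2}$ and $x = \omega r_0$, and use \eqref{eq:bessel_series} to recognize that
\[
\Gamma(\nu+1)\,(x/2)^{-\nu} J_\nu(x) \;=\; \sum_{k=0}^{\infty} \frac{(-1)^k \,\Gamma(\nu+1)}{k!\,\Gamma(\nu+k+1)} \Bigl(\frac{x}{2}\Bigr)^{2k},
\]
so that $I_1 = \sqrt{2m+3}\,r_0^{-3/2}\bigl[\Gamma(m+\tfrac{3}{2})(x/2)^{-(m+1/2)} J_{m+1/2}(x) - 1\bigr]$. The telescoping estimate $\Gamma(\nu+1)/\Gamma(\nu+k+1)\le \nu^{-k}$ then gives the bracket bounded by $e^{x^2/(4\nu)} - 1 = O(1/\nu)$, and the $\sqrt{m}$ prefactor is killed by the $1/m$ from the bracket, producing $|I_1|=O(m^{-1/2})\to 0$.

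For $I_2$, I would recognize the Cauchy-product structure of $\sum_{k_1+k_2=k}$. Applying the same Gamma-ratio bound to both factors, combined with the standard identity $\sum_{k_1+k_2=k}\frac{1}{k_1!\,k_2!}=\frac{2^k}{k!}$ and the trivial estimate $(2m+3)/(2m+2k+3)\le 1$, reduces $|I_2|$ to
\[
|I_2| \;\le\; \sum_{k=1}^{\infty} \frac{1}{k!}\Bigl(\frac{\omega^2 r_0^2}{2m+3}\Bigr)^{k} \;=\; \exp\!\Bigl(\frac{\omega^2 r_0^2}{2m+3}\Bigr) - 1 \;\longrightarrow\; 0.
\]
The arguments for $I_3$ and $I_4$ are essentially identical, with $\nu = m$ in place of $\nu = m + \tfrac{1}{2}$ so that one uses the integer-order Bessel expansion $J_m$ instead of $J_{m+1/2}$; the prefactor $\sqrt{2m+2}\,r_0^{-1}$ is again absorbed by the $1/m$ decay coming from the Gamma ratio.

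The only mildly delicate point, and the place I would be most careful, is the bookkeeping for $I_1$ and $I_3$: the extra polynomial-in-$m$ prefactor means that one cannot merely quote $|\Gamma(\nu+1)(x/2)^{-\nu}J_\nu(x)-1|\to 0$ from the pointwise convergence $J_\nu(x)\to 0$, but must use the quantitative $O(1/m)$ bound obtained from the Gamma ratio, which dominates the $O(\sqrt m)$ prefactor. Everything else reduces to the elementary inequality $\Gamma(\nu+1)/\Gamma(\nu+k+1)\le \nu^{-k}$ and the exponential series.
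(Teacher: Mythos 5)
Your proposal is correct and follows essentially the same route as the paper's proof: recognizing each $I_i$ as a shifted tail of the Bessel series, using the key inequality $\Gamma(\nu+1)/\Gamma(\nu+k+1)\le\nu^{-k}$ to extract a factor of $1/m$, and invoking the Cauchy-product identity $\sum_{k_1+k_2=k}\frac{1}{k_1!\,k_2!}=\frac{2^k}{k!}$ for $I_2$ (and $I_4$). The quantitative $O(1/m)$ bound on the bracket needed to overcome the $\sqrt{m}$ prefactor, which you correctly flag as the delicate step, is exactly what the paper establishes via the same estimate.
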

	
	\begin{proof}[Proof of Theorem \ref{thm:main_result}]
		The proof proceeds in two steps.
		
		\medskip
		\textbf{Step 1: Construction of the transmission eigenfunction and its properties.}
		
		Consider the interior transmission problem (see \cite{ColtonKress}, p.~319):
		\begin{equation}\label{Interior Transmission Problem}
			\left\{\begin{array}{@{}ll@{}}
				(\Delta + \omega^2 \mathfrak{n}^2) w = 0 & \text{in } \displaystyle\bigcup_{i=1}^{n} B_{r_0}(\mathbf{y}_i), \\
				(\Delta + \omega^2) v = 0 & \text{in } \displaystyle\bigcup_{i=1}^{n} B_{r_0}(\mathbf{y}_i), \\
				w = v,\ \dfrac{\partial w}{\partial \nu} = \dfrac{\partial v}{\partial \nu} & \text{on } \partial\left(\displaystyle\bigcup_{i=1}^{n} B_{r_0}(\mathbf{y}_i)\right),
			\end{array}\right.
		\end{equation}
		where $\nu$ denotes the outward unit normal. A pair $(w,v)$ satisfying \eqref{Interior Transmission Problem} is called a transmission eigenfunction corresponding to the transmission eigenvalue $\omega$. We analyze explicit solutions in both two-dimensional  and three-dimensional case, showing that $v(\mathbf{x})$ attains its maximum simultaneously on each $\partial B_{r_0}(\mathbf{y}_i)$ ($i=1,\dots,n$) with arbitrarily large magnitude for large predetermined parameter $m$.
		
		\item[\textbf{Case 1}: $d=3.$] For a fixed positive integer $m$ to be determined, in polar coordinates with $\mathbf{x} \in B_{r_0}(\mathbf{y}_i)$ ($i=1,\dots,n$):
		\begin{equation*}
			w(\mathbf{x}) = \alpha_{m}^{l} j_m(\omega \mathfrak{n} r_i) Y_{m}^{l}(\theta_i, \varphi_i), \quad
			v(\mathbf{x}) = \beta_{m}^{l} j_m(\omega r_i) Y_{m}^{l}(\theta_i, \varphi_i), \quad -m \leqslant l \leqslant m,
		\end{equation*}
		where $r_i = |\mathbf{x}-\mathbf{y}_i|$, $\theta_i = \arccos((x_3-y_{i3})/r_i)$, $\varphi_i = \arctan((x_2-y_{i2})/(x_1-y_{i1}))$. To satisfy the boundary conditions on $\partial B_{r_0}(\mathbf{y}_i)$, we set $\alpha^l_m = [j_m(\omega r_0)/j_m(\omega \mathfrak{n} r_0)] \beta^l_m$ with $\mathfrak{n}$ solving
		\begin{equation*}
			\mathfrak{n} j_{m}^\prime(\omega \mathfrak{n} r_0) j_{m}(\omega r_0) - j_{m}^\prime(\omega r_0) j_{m}(\omega \mathfrak{n} r_0) = 0.
		\end{equation*}
		Normalizing $v$ such that $\|v\|_{L^2(B_{r_0}(\mathbf{y}_i))} = 1$ for all $i$, we obtain
		\begin{equation*}
			\beta_m^l = \sqrt{\frac{2}{\pi}} \frac{\omega^{\frac{3}{2}}}{\sqrt{\int_0^{\omega r_0} J^2_{m+\frac{1}{2}}(r) r  dr}}.
		\end{equation*}
		Combining these results yields
		\begin{equation}\label{v in d=3}
			v(\mathbf{x}) = \frac{\omega r_i^{-\frac{1}{2}} J_{m+\frac{1}{2}}(\omega r_i)}{\sqrt{\int_0^{\omega r_0} J_{m+\frac{1}{2}}^2(r) r  dr}} Y_m^l(\theta_i, \varphi_i), \quad \mathbf{x} \in B_{r_0}(\mathbf{y}_i).
		\end{equation}
		By using recurrence relation for Bessel function, we have
		\begin{equation*}
			\left(\frac{J_{m+\frac{1}{2}}(\omega r)}{r^{\frac{1}{2}}}\right)^{\prime}=\frac{m J_{m+\frac{1}{2}}(\omega r)-\omega r J_{m+\frac{3}{2}}(\omega r) }{r^{\frac{3}{2}}}.
		\end{equation*}
		Let $m>\omega r_0$ and use the fact that $J_{m+\frac{1}{2}}(r)>J_{m+\frac{3}{2}}(r)~ ,\forall r \in (0,m]$, then $J_{m+\frac{1}{2}}(\omega r) r^{-\frac{1}{2}}$ is increasing on $\left(0, r_0\right]$. Consequently, by \eqref{eq:bessel_series}, we calculate
		\begin{equation*}
			\begin{aligned}
				&\max _{r_i \in\left(0, r_0\right]}\frac{\omega {r_i^{-\frac{1}{2}}} J_{m+\frac{1}{2}}(\omega r_i)}{\sqrt{\int_0^{\omega r_0} J_{m+\frac{1}{2}}^2(r) r d r}} =\frac{\omega{r_0^{-\frac{1}{2}}} J_{m+\frac{1}{2}}(\omega r_0)}{\sqrt{\int_0^{\omega r_0} J_{m+\frac{1}{2}}^2(r) r d r}}\\
				&=\frac{\sum_{k=0}^{\infty}\frac{(-1)^{k}\omega^{m+2k+\frac{3}{2}}r_{0}^{m+2k}}{k!2^{m+2k+\frac{1}{2}}\Gamma(m+k+\frac{3}{2})}}{\sqrt{\sum_{k=0}^{\infty}\sum_{k_1+k_2=k}\frac{1}{k_1{!}k_2{!}\Gamma(m+k_1+\frac{3}{2})\Gamma(m+k_2+\frac{3}{2})}\frac{(-1)^k(\omega r_0)^{2m+2k+3}}{2^{2m+2k+1}2m+2k+3}}}\\
				&=\frac{\sqrt{2m+3}r_0^{-\frac{3}{2}}+\sum_{k=1}^{\infty}\frac{(-1)^k\Gamma(m+\frac{3}{2})\sqrt{2m+3} }{k!\Gamma(m+k+\frac{3}{2})}\left(\frac{\omega r_0}{2}\right)^{2k}r_0^{-\frac{3}{2}}}{\sqrt{1+\sum_{k=1}^{\infty}\sum_{k_1+k_2=k}\left(\frac{\Gamma^2(m+\frac{3}{2})}{k_1!k_2!\Gamma(m+k_1+\frac{3}{2})\Gamma(m+k_2+\frac{3}{2})}\right)\times\frac{(-1)^k(\omega r_0/2)^{2k}(2m+3)}{(2m+2k+3)}}}\\
				&=\frac{\sqrt{2m+3}r_0^{-\frac{3}{2}}+I_1(m,\omega, r_0)}{\sqrt{1+I_2(m,\omega, r_0)}}.
			\end{aligned}
		\end{equation*}
		By Lemma \ref{I_1,I_2,I_3,I_4}, $I_1, I_2 \to 0$ as $m \to \infty$ for fixed $r_0$. Thus, there exists $M_1(\omega,r_0)$ such that for predetermined parameter $m \geqslant M_1$,
		\begin{equation}\label{upper bound in d=3}
			\frac{J_{m+\frac{1}{2}}(\omega r_0) \omega r_0^{-\frac{1}{2}}}{\sqrt{\int_0^{\omega r_0} J_{m+\frac{1}{2}}^2(r) r  dr}} > \frac{1}{2} \sqrt{2m+3}  r_0^{-\frac{3}{2}}.
		\end{equation}
		Combining \eqref{eq:spherical_harmonics}, \eqref{v in d=3}, \eqref{upper bound in d=3}, and spherical harmonic properties from Lemma \ref{lem:legendre_bound}, there exist points $\mathbf{y}^*_i \in \partial B_{r_0}(\mathbf{y}_i)$ satisfying
		\begin{equation}\label{3.12}
			|v(\mathbf{y}^*_i)| > \frac{1}{16} \sqrt{2m+3}  r_0^{-\frac{3}{2}}.
		\end{equation}
		
		\item[\textbf{Case 2}: $d=2.$] The two-dimensional case follows analogously. For $\mathbf{x} \in B_{r_0}(\mathbf{y}_i)$ ($i=1,\dots,n$):
		\begin{equation*}
			w(\mathbf{x}) = \alpha_m J_m(\omega \mathfrak{n} r_i) e^{\mathrm{i}m \theta_i}, \quad
			v(\mathbf{x}) = \beta_m J_m(\omega r_i) e^{\mathrm{i}m \theta_i},
		\end{equation*}
		with $r_i = |\mathbf{x}-\mathbf{y}_i|$, $\theta_i = \arctan((x_2-y_{i2})/(x_1-y_{i1}))$. Setting $\alpha_m = [J_m(\omega r_0)/J_m(\omega \mathfrak{n} r_0)] \beta_m$ and $	\beta _m=\frac{1}{\sqrt{2\pi}}\frac{\omega}{\sqrt{ \int_{0}^{\omega r_0}J^2_m(r)rdr}}$  yields 
		\begin{equation}
			v(\mathbf{x}) = \frac{1}{\sqrt{2\pi}} \frac{\omega J_m(\omega r_i)}{\sqrt{\int_{0}^{\omega r_0} J_{m}^2(r) r  dr}} e^{\mathrm{i}m \theta_i}.
		\end{equation}
		For $m > \omega r_0$, by \eqref{eq:bessel_series}, we obtain
		\begin{equation*}
			\max_{r_i \in [0,r_0]} \frac{\omega J_m(\omega r_i)}{\sqrt{\int_{0}^{\omega r_0} J_{m}^2(r) r  dr}} = \frac{\sqrt{2m+2} r^{-1}_0 + I_3(m,\omega,r_0)}{\sqrt{1 + I_4(m,\omega,r_0)}}.
		\end{equation*}
		By Lemma \ref{I_1,I_2,I_3,I_4}, $I_3, I_4 \to 0$ as $m \to \infty$ for fixed $r_0$. Then, for any predetermined $m > M_2(\omega, r_0)$, there exist points $\mathbf{y}^{*}_{i} \in \partial B{r_0}(\mathbf{y}_i)$ such that
		\begin{equation}\label{3.14}
			|v(\mathbf{y}^*_i)| \geqslant \frac{\sqrt{2m+2} r^{-1}_0}{2\sqrt{2\pi}}.
		\end{equation}
		Defining $M = \max\{M_1(\omega,r_0), M_2(\omega,r_0), \lfloor \omega r_0 \rfloor + 1\}$, both \eqref{3.12} and \eqref{3.14} hold for predetermined parameter $m \geqslant M$.

		\medskip
		\textbf{Step 2: Herglotz approximation and construction of $u_0$.}
		
		Since bounded Lipschitz domains admit smooth outer approximations \cite{Antonini}, let $D_{r_0/2}$ be a $C^\infty$-smooth domain satisfying
		\begin{equation*}
			D \Subset D_{r_0/2} \quad \text{and} \quad \operatorname{dist}(D, \partial D_{r_0/2}) < r_0/2.
		\end{equation*}
		As established in Theorem \ref{thm:main_result}, the sets \(D_{r_0/2}\) and \(B_{r_0}(\mathbf{y}_i)\) are disjoint for all \(i\). Then we define $\mathcal{O} = D_{r_0/2} \cup \bigcup_{i=1}^{n} B_{r_0}(\mathbf{y}_i)$. Consider the interior transmission problem
		\begin{equation}\label{Interior transmission problem 2}
			\left\{\begin{array}{@{}ll@{}}
				(\Delta + \omega^2 \mathfrak{n}^2) w_1 = 0 & \text{in } \mathcal{O}, \\
				(\Delta + \omega^2) v_1 = 0 & \text{in } \mathcal{O}, \\
				w_1 = v_1,\ \dfrac{\partial w_1}{\partial \nu} = \dfrac{\partial v_1}{\partial \nu} & \text{on } \partial \mathcal{O}.
			\end{array}\right.
		\end{equation}
		The pair $(w_1, v_1) = \chi_{\bigcup_{i=1}^n B_{r_0}(\mathbf{y}_i)} (w,v) + \chi_{D_{r_0/2}} (0,0)$ satisfies \eqref{Interior transmission problem 2} by construction. Applying Lemma \ref{Herglotz lemma} to $\chi_{\bigcup_{i=1}^n B_{r_0}(\mathbf{y}_i)} v + \chi_{D_{r_0/2}} \cdot 0$, there exists $g \in L^2(\mathbb{S}^{d-1})$ such that the Herglotz wave function
		\begin{equation*}
			H_g(\mathbf{x}) = \int_{\mathbb{S}^{d-1}} g(\theta) \exp(\mathrm{i} \omega \mathbf{x} \cdot \theta)  d\theta
		\end{equation*}
		satisfies
		\begin{equation}\label{eq3.14}
			\left\| H_g - \left( \chi_{\bigcup_{i=1}^n B_{r_0}(\mathbf{y}_i)} v + \chi_{D_{r_0/2}} \cdot 0 \right) \right\|_{H^5(\mathcal{O})} < \varepsilon.
		\end{equation}
		This implies
		\begin{equation}\label{important estimate for H_g}
			\| H_g - v \|_{H^5(B_{r_0}(\mathbf{y}_i))} < \varepsilon \quad (i=1,\dots,n) \quad \text{and} \quad \| H_g \|_{H^5(D)} < \varepsilon.
		\end{equation}
		Define
		\begin{equation}\label{u_0}
			u_0(\mathbf{x},t) = H_g(\mathbf{x}) \exp(\mathrm{i} \omega t).
		\end{equation}
		Properties 1 and 2 follow directly from \eqref{u_0}. By the Sobolev embedding theorem and \eqref{important estimate for H_g}, there exist constants $C_1$ (depending on $D, d$) and $C_2$ (depending on $r_0, D, d$) such that
		\begin{equation}\label{3.18}
			\| H_g \|_{L^{\infty}(D)} \leqslant C_1 \varepsilon,
		\end{equation}
		and
		\begin{equation}\label{3.17}
			\| H_g - v \|_{L^{\infty}(B_{r_0}(\mathbf{y}_i))} \leqslant C_2 \varepsilon \quad (i=1,\dots,n).
		\end{equation}	
		Combining \eqref{3.12}, \eqref{3.14}, \eqref{u_0}, \eqref{3.17}, and \eqref{3.18} with the continuity of $H_g$ and $v$, Properties 3 and 4 are verified.
		
		The proof is complete.
	\end{proof}
	
	\begin{remark}
		The approximation of $\chi_{\bigcup_{i=1}^n B_{r_0}(\mathbf{y}_i)} v + \chi_{D_{r_0/2}} \cdot 0$ in the $H^5$ norm required in \eqref{eq3.14} is valid due to the $C^\infty$ regularity of $\partial D_{r_0/2}$. This construction serves a critical purpose: the resulting estimate will be employed in the subsequent section to separately control the source term $F_1$ in equation~\eqref{remaining term},  the source term $F_2$ in equation~\eqref{nonlinear hyperbolic equation with source term and zero initial condition}, and all spatial derivatives $\partial_{\mathbf{x}}^{\alpha} u_0$ for $|\alpha|\leqslant 3$. Detailed justification for this approach is provided in \eqref{estimate on F_1} and Lemma~\ref{Lemma 5.1}.
	\end{remark}
	
	The initial and boundary data for problem \eqref{1.1} are formally specified as:
	\begin{equation}\label{initial and boundary condition}
		\begin{aligned}
			& \phi_1(\mathbf{x}) = \left. u_0(\mathbf{x},t) \right|_{\Omega \times \{ t=0 \}} = H_{g}(\mathbf{x})|_{\Omega}, \\
			& \phi_2(\mathbf{x}) = \partial_t u_0 \left. (\mathbf{x},t) \right|_{\Omega \times \{ t=0 \}} = \mathrm{i}\omega H_{g}(\mathbf{x})|_{\Omega}, \\
			& \psi(\mathbf{x},t) = u_0 \left. (\mathbf{x},t) \right|_{\partial \Omega \times [0,T]} = \exp(\mathrm{i}\omega t)H_g(\mathbf{x})|_{\partial \Omega \times [0,T]}.
		\end{aligned}
	\end{equation}
	Similarly, the initial data for problem \eqref{1.2} are defined as:
	\begin{equation}\label{initial condition}
		\begin{aligned}
			& \Phi_1(\mathbf{x}) = \left. u_0(\mathbf{x},t) \right|_{\mathbb{R}^d \times \{ t=0 \}} = H_{g}(\mathbf{x})|_{\mathbb{R}^d}, \\
			& \Phi_2(\mathbf{x}) = \partial_t u_0 \left. (\mathbf{x},t) \right|_{\mathbb{R}^d \times \{ t=0 \}} = \mathrm{i}\omega H_{g}(\mathbf{x})|_{\mathbb{R}^d}.
		\end{aligned}
	\end{equation}
	
\begin{remark}
	It is essential to recognize that the constructions in \eqref{initial and boundary condition} and \eqref{initial condition} remain abstract at this stage. The function $u_0$—and consequently all associated data—depends critically on the yet-to-be-determined parameters $\varepsilon$, $m$, $r_0$, and the configuration of points $\mathbf{x}_1, \dots, \mathbf{x}_n$ on $\partial D$. A concrete realization of $u_0$ requires explicitly specifying these parameters, which are intrinsically determined by the prescribed large constant $\mathcal{M}$ and the spatial locations of the points $\mathbf{x}_1, \dots, \mathbf{x}_n$. In the next section, we will systematically fix $\varepsilon$, $m$, and $r_0$ in terms of $\mathcal{M}$ and the given boundary point configuration, ultimately yielding a well-defined $u_0$ and corresponding initial and boundary inputs suitable for rigorous analysis.
\end{remark}
	
	\section{Proof of Theorem \ref{1.1}}\label{sec:4}
	
	In this section, we present the complete proof of Theorem~\ref{thm:main}. Our strategy involves analyzing an auxiliary problem \eqref{remaining term} with zero initial-boundary conditions and a source term, derived from the linear second-order hyperbolic equation \eqref{1.1}. We establish well-posedness for this  auxiliary problem via the standard Galerkin method and demonstrate that its solution exhibits small H\" older norm on $\Omega \setminus \overline{D}$. Finally, by integrating Theorems \ref{thm:main_result} and \ref{3.1} and a delicate balancing of the parameters $\varepsilon$, $r_0$, and $m$, we precisely generate the quasi-singularity phenomenon central to our main result.
	
	Let $\mathcal{U} : \Omega \times [0,T] \rightarrow \mathbb{C}$ satisfy 
	\begin{equation}\label{remaining term}
		\begin{cases}
			L_1\mathcal{U} = F_1 & \text{in } \Omega \times (0,T], \\
			\mathcal{U}(\mathbf{x},0) = 0, \quad \partial_t \mathcal{U}(\mathbf{x},0) = 0 & \text{in } \Omega, \\
			\mathcal{U}(\mathbf{x},t) = 0 & \text{on } \partial\Omega \times [0,T],
		\end{cases}
	\end{equation}
	where source term
	\begin{equation}\label{F_1}
		F_1(\mathbf{x}, t)=-\left[\nabla \cdot\left((\mathbf{I}-\mathbf{A}_1) \nabla u_0\right)-\mathbf{b}_1 \cdot \nabla u_0-c_1 u_0\right].
	\end{equation}
	Recall that $\mathbf{A}_1 - \mathbf{I}, \mathbf{b}_1(\cdot,t), c_1(\cdot,t)$ have compact support in $D$ for $t\in[0,T]$, we have 
	\begin{equation*}
		\operatorname{supp}F_1(\cdot,t)\subset D,\ \forall t\in [0,T].
	\end{equation*}
	Furthermore, combining \eqref{important estimate for H_g}, \eqref{u_0}, \eqref{F_1} and the regularity condition on $\mathbf{A}_1,\mathbf{b}_1,c_1$, we have the following estimate for $F_1$,
	\begin{equation}\label{estimate on F_1}
		\| F_1 \|_{H^2(0,T;L^2(\Omega))} + \| F_1 \|_{H^1(0,T;H^1(\Omega))} \leqslant C\|H_g\|_{H^3(D)} \leqslant C\varepsilon
	\end{equation}
	where the spatial derivatives of \( u_0 \) in \( F_1 \) are up to second order, but the space-time Sobolev norm estimate requires third-order spatial derivatives of \( H_g \) due to the spatial gradient component in \( H^1(\Omega) \). Hence, the \( H^3 \) norm of \( H_g \) suffices to bound all terms in the estimate. The constant $C$ depends on $\omega,d,T,D$, $\|\mathbf{A}_1\|_{W^{2,\infty}(\Omega)},\|\mathbf{b}_{1},c_1\|_{W^{1,\infty}(0,T; W^{1,\infty}(\Omega))}$ and $ \|\mathbf{b}_{1},c_1\|_{W^{2,\infty}(0,T;L^{\infty}(\Omega))}$.
	
	Next, we employ the Galerkin method to establish the well-posedness of problem \eqref{remaining term} for {complex-valued solutions}. Unlike Evans' treatment of {real-valued solutions} under {$C^1$-smooth coefficient assumptions} for $\mathbf{A}_1$, $\mathbf{b}_1$, and $c_1$~\cite{Evans}, our weaker regularity requirements necessitate a complete rederivation. For completeness, we provide the full detailed analysis below.
	\begin{theorem}\label{3.1}
		For any $T$ with $0<T<\infty$, equation \eqref{remaining term} admits a unique solution $\mathcal{U}$ satisfying
		$$ 	\mathcal{U}\in L^{\infty}\left(0,T;H^{1}_{0}(\Omega)\right),\quad  \mathcal{U}^\prime \in L^{\infty}\left(0,T;L^{2}(\Omega)\right) ,$$ 
		with the following properties:
		\begin{enumerate}
			\item Improved H\"older regularity on $\Omega\backslash \overline{D}:$ for any  Lipschtiz domain $\Omega^\prime \Subset \Omega\backslash \overline{D}$,
			\begin{equation*}
				\mathcal{U} \in L^{\infty}(0,T;C^{1,\frac{1}{2}}(\overline{\Omega}^{\prime})). 
			\end{equation*} 
			\item  Smallness estimate:
			\begin{equation*}
				\|\mathcal{U}\|_{L^{\infty}(0,T;C^{1,\frac{1}{2}}(\overline{\Omega}^{\prime})) }\leqslant C_3 \varepsilon ,
			\end{equation*}
			where the constant $C_3$ depends on where the constant $C_3$ depends on $d$, $T$, $\theta$, $D$, $\Omega^\prime$, $\Omega$, $\|\mathbf{A}_1\|_{W^{2,\infty}(\Omega)}$, $\|\mathbf{b}_1\|_{W^{1,\infty}(0,T; W^{1,\infty}(\Omega))}$, $\|c_1\|_{W^{1,\infty}(0,T; W^{1,\infty}(\Omega))}$, $\|\mathbf{b}_1\|_{W^{2,\infty}(0,T; L^{\infty}(\Omega))}$, and $\|c_1\|_{W^{2,\infty}(0,T; L^{\infty}(\Omega))}$.
			
		\end{enumerate}
	\end{theorem}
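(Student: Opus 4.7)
The plan is to treat problem \eqref{remaining term} as a classical linear second-order hyperbolic problem with zero Cauchy and boundary data and a source $F_1$ supported in $\overline{D}$, and to work out the regularity in three sequential blocks: a Galerkin existence/uniqueness step, a time-bootstrap step, and an interior elliptic regularity step tied to the fact that outside $D$ the equation collapses to $\partial_{tt}\mathcal{U}-\Delta\mathcal{U}=0$. For the first block, I would take the orthonormal basis $\{w_k\}$ of $L^2(\Omega)$ of Dirichlet Laplace eigenfunctions (simultaneously orthogonal in $H^1_0(\Omega)$), set $\mathcal{U}_m(t)=\sum_{k=1}^m d_k^m(t)w_k$, and project the PDE onto $\mathrm{span}\{w_1,\ldots,w_m\}$. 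The resulting linear ODE system for $(d_k^m)$ has continuous (in $t$) coefficients by the assumed regularity of $\mathbf{b}_1,c_1$, hence is globally solvable. Testing the projected equation against $(\mathcal{U}_m)_t$, integrating by parts against the $\mathbf{A}_1$ term (using the symmetry/coercivity of item (3)), and applying Gronwall to absorb the $\mathbf{b}_1,c_1$ contributions give
$$
\|\mathcal{U}_m\|_{L^\infty(0,T;H^1_0(\Omega))}+\|\mathcal{U}_m'\|_{L^\infty(0,T;L^2(\Omega))}\le C\|F_1\|_{L^2(0,T;L^2(\Omega))}.
$$
Weak-$*$ compactness then furnishes a limit $\mathcal{U}$ with the baseline regularity stated in the theorem, and uniqueness follows by testing the homogeneous difference equation against its time derivative.

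Second, I would bootstrap in time. Formally differentiating \eqref{remaining term} in $t$ yields
$$
L_1(\mathcal{U}_t)=F_{1,t}-(\mathbf{b}_1)_t\cdot\nabla \mathcal{U}-(c_1)_t\,\mathcal{U},
$$
whose right-hand side belongs to $L^2(0,T;L^2(\Omega))$ by \eqref{estimate on F_1} together with the $W^{1,\infty}(0,T;W^{1,\infty})$ regularity of $\mathbf{b}_1,c_1$. The same Galerkin/energy argument then yields $\|\mathcal{U}_t\|_{L^\infty(H^1_0)}+\|\mathcal{U}_{tt}\|_{L^\infty(L^2)}\le C\|F_1\|_{H^1(0,T;L^2)}$. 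A second differentiation—this is where $\mathbf{b}_1,c_1\in W^{2,\infty}(0,T;L^\infty)$ and the joint $H^2(0,T;L^2)\cap H^1(0,T;H^1)$ control of $F_1$ are used—produces the key estimate
$$
\|\mathcal{U}_{tt}\|_{L^\infty(0,T;H^1(\Omega))}\le C\bigl(\|F_1\|_{H^1(0,T;H^1(\Omega))}+\|F_1\|_{H^2(0,T;L^2(\Omega))}\bigr).
$$

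Third, I would exploit the fact that $\mathbf{A}_1-\mathbf{I}$, $\mathbf{b}_1$, $c_1$, and $F_1$ all vanish on $\Omega\setminus\overline{D}$, so pointwise in $t$ the PDE reduces to $-\Delta \mathcal{U}(\cdot,t)=-\mathcal{U}_{tt}(\cdot,t)$ on $\Omega\setminus\overline{D}$. For Lipschitz domains $\Omega'\Subset \Omega''\Subset \Omega\setminus\overline{D}$, interior $H^1\to H^3$ elliptic regularity for the plain Laplacian yields
$$
\|\mathcal{U}(\cdot,t)\|_{H^3(\Omega')}\le C\bigl(\|\mathcal{U}_{tt}(\cdot,t)\|_{H^1(\Omega'')}+\|\mathcal{U}(\cdot,t)\|_{L^2(\Omega'')}\bigr),
$$
so $\mathcal{U}\in L^\infty(0,T;H^3(\Omega'))$. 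In both $d=2$ and $d=3$, Sobolev embedding $H^3(\Omega')\hookrightarrow W^{2,6}(\Omega')\hookrightarrow C^{1,1/2}(\overline{\Omega'})$ delivers the asserted H\"older regularity.

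The smallness conclusion is then immediate by linearity: chaining the three blocks gives $\|\mathcal{U}\|_{L^\infty(0,T;C^{1,1/2}(\overline{\Omega'}))}\le C\|F_1\|_{H^2(0,T;L^2)\cap H^1(0,T;H^1)}$, which is $\le C_3\varepsilon$ by \eqref{estimate on F_1}. The hard part, and what forces the rederivation rather than a direct appeal to Evans, is the time-bootstrap under the weaker, complex-valued coefficient assumptions: $\mathbf{A}_1$ is only in $W^{2,\infty}(\Omega)$ and $\mathbf{b}_1,c_1$ only in $W^{1,\infty}(0,T;W^{1,\infty})\cap W^{2,\infty}(0,T;L^\infty)$, so each time-differentiation introduces a commutator $[L_1,\partial_t]\mathcal{U}$ that must be tracked carefully to keep the forcing in the right Sobolev class at each step. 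Once this bookkeeping is handled, the remainder of the proof amounts to standard energy estimates, interior elliptic regularity, and Sobolev embedding.
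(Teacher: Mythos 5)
Your proposal follows essentially the same route as the paper: Galerkin approximation with an $L^2$-orthonormal/$H^1_0$-orthogonal basis, energy estimates bootstrapped through two time derivatives of the Galerkin equation to obtain $\mathcal{U}_{tt}\in L^\infty(0,T;H^1_0(\Omega))$, then interior elliptic regularity for $-\Delta\mathcal{U}=-\mathcal{U}_{tt}$ on $\Omega\setminus\overline{D}$ and Sobolev embedding $H^3\hookrightarrow C^{1,1/2}$, with smallness inherited from \eqref{estimate on F_1} by linearity. The only differences are cosmetic (you differentiate the PDE formally and invoke ``the same Galerkin argument,'' whereas the paper carries the differentiation through at the Galerkin level via $\tilde{\mathcal{U}}_m=\mathcal{U}_m'$ and $\hat{\mathcal{U}}_m=\mathcal{U}_m''$, and the paper additionally spells out the real-linearization of the complex ODE system to invoke Carath\'eodory).
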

	
	\begin{proof}
		We introduce the following notations. 	Given complex-valued $\mathcal{U}, \mathcal{V} \in H^1(\Omega)$, write
		\begin{equation*}
			(\mathcal{U}, \mathcal{V})_{L^2}:=\int_\Omega \mathcal{U} \bar{\mathcal{V}} d \mathbf{x}
		\end{equation*}
		and set 	
		\begin{equation*}
			\begin{aligned}
				&A[\mathcal{U}, \mathcal{V};t]:=\int_\Omega \sum_{i, j=1}^d a^{i j}_1(\cdot) \mathcal{U}_{\mathrm{x}_i} \overline{\mathcal{V}}_{\mathrm{x}_j},\\
				&B[\mathcal{U}, \mathcal{V};t]:=\int_\Omega \sum_{i, j=1}^d a^{i j}_1(\cdot) \mathcal{U}_{\mathrm{x}_i} \overline{\mathcal{V}}_{\mathrm{x}_j}+\sum_{i=1}^d b^i_1(\cdot,t) \mathcal{U}_{\mathrm{x}_i} \overline{\mathcal{V}}+c_1(\cdot,t) \mathcal{U} \overline{\mathcal{V}} d \mathrm{x}.\\
			\end{aligned}
		\end{equation*}
		The proof is divided into three steps.

		\medskip
		\textbf{Step 1: Galerkin approximation.}  
		Choose smooth complex-valued functions $\{w_k\}_{k=1}^{\infty}$ satisfying:
		\begin{equation}\label{basis}
			\begin{aligned}
				&\text{(i) Orthonormal basis in } L^2(\Omega;\mathbb{C}) ,\\
				&\text{(ii) Orthogonal basis in } H^1_0(\Omega;\mathbb{C}).
			\end{aligned}
		\end{equation}
		For fixed $m \in \mathbb{N}$, define $\mathcal{U}_m(t) = \sum_{k=1}^{m} d_m^k(t) w_k$ with initial conditions
		\begin{equation}\label{zero condition}
			d_m^k(0) = 0, \quad d_m^{k\prime}(0) = 0 \quad (k=1,\dots,m),
		\end{equation}
		satisfying the variational equation:
		\begin{equation}\label{eq 3.6}
			\left( \mathcal{U}^{\prime\prime}_m(t), w_k \right)_{L^2} + B[\mathcal{U}_m, w_k; t] = \left( F_1(\cdot,t), w_k \right)_{L^2}, \quad k=1,\dots,m.
		\end{equation}
		By \eqref{basis}, this reduces to a complex ODE system:
		\begin{equation*}
			\mathbf{d}''_m(t) + \mathbf{E}(t)\mathbf{d}_m(t) = \mathbf{f}(t), \quad 
			\mathbf{d}_m(0) = \mathbf{d}'_m(0) = \mathbf{0},
		\end{equation*}
		where $\mathbf{f} = \left( (F_1, w_1)_{L^2}, \dots, (F_1, w_m)_{L^2} \right)^\top$, $\mathbf{d}_m = (d_m^1, \dots, d_m^m)^\top$, $\mathbf{E} = (e^{kl})_{m\times m}$ with $e^{kl} := B[w_l, w_k; t]$.
		
		This system is equivalent to a first-order real ODE:
		\begin{equation*}
			\mathbf{Y}'(t) = \mathbf{J}(t)\mathbf{Y}(t) + \mathbf{F}(t),\quad \mathbf{Y}(0)=\mathbf{0},
		\end{equation*}
		where $\mathbf{Y} = \left( \operatorname{\Re}\mathbf{d}_m, \operatorname{\Im}\mathbf{d}_m, \operatorname{\Re}\mathbf{d}_m', \operatorname{\Im}\mathbf{d}_m' \right)^\top$, $\mathbf{F} = \left( \mathbf{0}_{2m}, \operatorname{\Re}\mathbf{f}, \operatorname{\Im}\mathbf{f} \right)^\top$, and
		$
		\mathbf{J} = \begin{pmatrix}
			\mathbf{0}_{2m} & \mathbf{I}_{2m} \\
			\mathbf{M}_{2m} & \mathbf{0}_{2m}
		\end{pmatrix}
		$ with $\mathbf{M}_{2m} = \begin{pmatrix}
			\operatorname{\Re}\mathbf{E} & -\operatorname{\Im}\mathbf{E} \\
			\operatorname{\Im}\mathbf{E} & \operatorname{\Re}\mathbf{E}
		\end{pmatrix}$. Given the regularity of $a^{ij}_1, b^i_1, c_1$, we have  
		$\mathbf{J} \in W^{2,\infty}(0,T; \mathbb{R}^{4m \times 4m})$,  
		$\mathbf{F} \in W^{2,\infty}(0,T; \mathbb{R}^{4m})$.  
		By Carathe\'odory's theorem \cite[Theorem 1.45]{Roubicek},  
		we obtain that  $\mathbf{d}_m \in C^3[0,T]$ with absolutely continuous third derivatives.

		\medskip
		\textbf{Step 2: Uniform estimates.}  
		Multiply \eqref{eq 3.6} by $\overline{d_m^{k\prime}}(t)$ and sum over $k=1,\dots,m$:
		\begin{equation}\label{eq 3.9}
			\left( \mathcal{U}^{\prime\prime}_m, \mathcal{U}^{\prime}_m \right)_{L^2} + B[\mathcal{U}_m, \mathcal{U}^{\prime}_m; t] = \left( F_1(\cdot,t), \mathcal{U}^{\prime}_m \right)_{L^2}.
		\end{equation}
		Noting that  
		$\frac{d}{dt} \| \mathcal{U}^{\prime}_m \|_{L^2}^2 = 2 \Re \left( \mathcal{U}^{\prime\prime}_m, \mathcal{U}^{\prime}_m \right)_{L^2}$  
		and  
		$$\frac{d}{dt} A[\mathcal{U}_m, \mathcal{U}_m; t] = 2 \Re \int_\Omega a^{ij}_1 \mathcal{U}_{m,\mathrm{x}_i} \overline{\mathcal{U}}^{\prime}_{m,\mathrm{x}_j}  d\mathbf{x},$$  
		then take the complex conjugate of \eqref{eq 3.9} and add it to itself, one obtains 
		\begin{equation*}
			\begin{aligned}
				&\quad\frac{d}{dt}\left( \left\| \mathcal{U}_{m}^{\prime}\left( t \right) \right\| _{L^2(\Omega)}^{2}+A\left[ \mathcal{U}_m,\mathcal{U}_m;t \right] \right) 
				\\
				&=2\int_{\Omega}{\Re\left\{ \left(- b^i_1 \mathcal{U}_{m,x_i}-c_1u_m+F_1 \right) \overline{\mathcal{U}}_{m}^{\prime} \right\} d\mathbf{x}}
				\\
				&=C\left( \left\| \mathcal{U}_{m}^{\prime}\left( t \right) \right\| _{L^2(\Omega)}^{2}+\left\| \nabla \mathcal{U}_m\left( t \right) \right\| _{L^2(\Omega)}^{2}+\left\| \mathcal{U}_m\left( t \right) \right\| _{L^2(\Omega)}^{2}+\left\| F_1\left( t \right) \right\| _{L^2(\Omega)}^{2} \right) 
				\\
				&\leqslant C\left( \left\| \mathcal{U}_{m}^{\prime}\left( t \right) \right\| _{L^2}^{2}+A\left[ \mathcal{U}_m,\mathcal{U}_m;t \right] +\left\| F_1\left( t \right) \right\| _{L^2(\Omega)}^{2} \right) 
			\end{aligned}
		\end{equation*}
		a.e. on $[0,T]$. Gronwall's inequality yields:
		\begin{equation*}
			\| \mathcal{U}_{m}^{\prime}(t) \|_{L^2}^2 + A[\mathcal{U}_m, \mathcal{U}_m; t] 
			\leqslant C \left( \| \mathcal{U}_{m}^{\prime}(0) \|_{L^2(\Omega)}^2 + A[\mathcal{U}_m, \mathcal{U}_m; 0] + \int_0^t \| F_1(s) \|_{L^2(\Omega)}^2  ds \right).
		\end{equation*}
		Using \eqref{zero condition} and coercivity $A[\mathcal{U}_m, \mathcal{U}_m; t] \geqslant \theta \| \nabla \mathcal{U}_m \|_{L^2(\Omega)}^2$, we conclude:
		\begin{equation}\label{estimate for u_m}
			\max_{0 \leqslant t \leqslant T} \left( \| \mathcal{U}_{m}^{\prime}(t) \|_{L^2(\Omega)} + \| \mathcal{U}_m(t) \|_{H^1_0(\Omega)} \right) \leqslant C \| F_1 \|_{L^2(0,T;L^2(\Omega))}.
		\end{equation}
		
		Now define $\tilde{\mathcal{U}}_m := \mathcal{U}_{m}^{\prime}$. Differentiating \eqref{eq 3.6} in $t$:
		\begin{equation*}
			\left( \tilde{\mathcal{U}}^{\prime\prime}_m, w_k \right)_{L^2} + B[\tilde{\mathcal{U}}_m, w_k; t] = \left( F_1^{\prime} - b^{i\prime}_1 \mathcal{U}_{m,\mathrm{x}_i} - c^{\prime}_1 \mathcal{U}_m, w_k \right)_{L^2}.
		\end{equation*}
		Following the same argument and using \eqref{estimate for u_m}:
		\begin{equation}\label{estimate 111}
			\max_{0 \leqslant t \leqslant T} \left( \| \tilde{\mathcal{U}}_{m}^{\prime} \|_{L^2(\Omega)} + \| \nabla \tilde{\mathcal{U}}_m \|_{L^2(\Omega)} \right) 
			\leqslant C \| F_1 \|_{H^1(0,T;L^2(\Omega))},
		\end{equation}
		where we used $\|\tilde{\mathcal{U}}_m(0) \|_{H^1_0} = 0$ and
		\begin{equation*}
			\| \tilde{\mathcal{U}}_m^{\prime}(0) \|_{L^2(\Omega)} \leqslant \| F_1(0) \|_{L^2(\Omega)} \leqslant C \| F_1 \|_{H^1(0,T;L^2(\Omega))}.
		\end{equation*}
		
		Similarly, define $\hat{\mathcal{U}}_m := \mathcal{U}_m^{\prime\prime}$. Differentiating \eqref{eq 3.6} twice in $t$:
		\begin{equation*}
			\left( \hat{\mathcal{U}}_m^{\prime \prime}, \hat{\mathcal{U}}_m^{\prime} \right)_{L^2} + B[\hat{\mathcal{U}}_m, \hat{\mathcal{U}}_m^{\prime}; t] = \left( F_1^{\prime\prime} - \mathcal{R}_m, \hat{\mathcal{U}}_m^{\prime} \right)_{L^2}
		\end{equation*}
		with $\mathcal{R}_m := 2b_1^{i\prime} \tilde{\mathcal{U}}_{m,x_i} + 2c_1^{\prime} \tilde{\mathcal{U}}_m + b_1^{i\prime\prime} \mathcal{U}_{m,x_i} + c_1^{\prime\prime} \mathcal{U}_m$. Using \eqref{estimate for u_m} and \eqref{estimate 111}, we obtain
		\begin{equation}\label{estimate of the derivative of u_m}
			\max_{0 \leqslant t \leqslant T} \left( \| \hat{\mathcal{U}}_{m}^{\prime} \|_{L^2(\Omega)} + \|  \hat{\mathcal{U}}_m \|_{H^1_0(\Omega)} \right) 
			\leqslant C \left( \| F_1 \|_{H^2(0,T;L^2(\Omega))} + \| F_1 \|_{H^1(0,T;H^1(\Omega))} \right),
		\end{equation}
		where we note:
		\begin{gather*}
			\| \hat{\mathcal{U}}_m(0) \|_{H^1_0(\Omega)} \leqslant C \| F_1 \|_{H^1(0,T;H^1_0(\Omega))}, \| \hat{\mathcal{U}}_m^{\prime}(0) \|_{L^2(\Omega)} \leqslant C \| F_1 \|_{H^2(0,T;L^2(\Omega))}.
		\end{gather*}
		
		\medskip
		\textbf{Step 3: Existence, uniqueness, and regularity.}  
		By estimate \eqref{estimate for u_m} and standard arguments \cite[Theorems 3–4, pp.407–408]{Evans}, there exists a unique weak solution $\mathcal{U} \in L^\infty(0,T;H^1_0(\Omega))$ with $\mathcal{U}^\prime \in L^\infty(0,T;L^2(\Omega))$ to \eqref{remaining term} satisfying:
		\begin{equation}\label{4.15}
			\underset{0 \leq t \leq T}{\operatorname{ess} \sup}	 \left( \| \mathcal{U}^{\prime}(t) \|_{L^2(\Omega)} + \| \mathcal{U}(t) \|_{H^1_0(\Omega)} \right) \leqslant C \| F_1 \|_{L^2(0,T;L^2(\Omega))}.
		\end{equation}
		From \eqref{estimate of the derivative of u_m} and \eqref{estimate on F_1}, we further have $\mathcal{U}^{\prime \prime} \in L^\infty(0,T;H^1_0(\Omega))$ and 
		\begin{equation}\label{4.16}
			\underset{0 \leq t \leq T}{\operatorname{ess} \sup}
			\left( \| \mathcal{U}^{\prime\prime\prime}(t) \|_{L^2(\Omega)} + \|  \mathcal{U}^{\prime \prime}(t) \|_{H^1_0(\Omega)} \right) 
			\leqslant C \left( \| F_1 \|_{H^2(0,T;L^2(\Omega))} + \| F_1 \|_{H^1(0,T;H^1(\Omega))} \right)\leqslant C\varepsilon.
		\end{equation}
		
		Since $\mathbf{A}_1 - \mathbf{I}$, $\mathbf{b}_1(\cdot,t)$, and $c_1(\cdot,t)$ have compact support in $D$ for all $t \in [0,T]$, the above $\mathcal{U}$ satisfies $-\Delta \mathcal{U} = \partial_{tt}\mathcal{U}$ in $\Omega \backslash \overline{D}$ in the weak sense. By interior elliptic regularity, \eqref{4.15} and \eqref{4.16}, for a.e. $t \in (0,T)$, $\mathcal{U}(t)$ belongs to $H^3_{\mathrm{loc}}(\Omega \backslash \overline{D})$, and for any $\Omega' \Subset \Omega \backslash \overline{D}$:
		\begin{equation*}
			\| \mathcal{U}(\cdot,t) \|_{H^3(\Omega')} \leqslant C \left( \| \partial_{tt} \mathcal{U}(\cdot,t) \|_{H^1(\Omega)} + \| \mathcal{U}(\cdot,t) \|_{H^1(\Omega)} \right) \leqslant C \varepsilon  
		\end{equation*}
		Taking the essential supremum over $t$, we obtain
		\begin{equation*}
			\| \mathcal{U} \|_{L^\infty(0,T;H^3(\Omega'))} \leqslant C \varepsilon.
		\end{equation*}
		By embedding theorem (see \cite{GilbargTrudinger}, Theorem 7.26), foy any $\Omega^\prime \Subset \Omega\backslash \overline{D}$ with Lipschitz boundary, we have
		\begin{equation*}
			\mathcal{U} \in L^\infty(0,T;C^{1,\frac{1}{2}}(\overline{\Omega^\prime}))
		\end{equation*} 
		and 
		\begin{equation*}
			\| \mathcal{U} \|_{L^\infty(0,T;C^{1,\frac{1}{2}}(\overline{\Omega^\prime}))} \leqslant C_3 \varepsilon,
		\end{equation*}
		which implies that properties 1 and 2 hold.
		
		The proof is complete.
	\end{proof}
	
	We now proceed to establish Theorem \ref{thm:main}.
\begin{proof}[Proof of Theorem \ref{1.1}.]
	In the three-dimensional case, we select predetermined parameters $r_0$, $\varepsilon$, and $m$ as follows:
	\begin{equation}\label{4.18}
		\begin{aligned}
			&\hfill r_0 = \min\left\{\,
			\frac{1}{3}(\mathcal{M} + 1)^{-1},\ 
			\frac{1}{6}\min_{1 \le i < j \le n} |\mathbf{x}_i - \mathbf{x}_j|,\ 
			\frac{1}{3}\operatorname{dist}(D, \partial \Omega)
			\,\right\}, \\
			&\hfill \varepsilon = \min\left\{\frac{1}{C_1 + C_2(r_0) + C_3},1\right\}, \\
			&\hfill m = \max\left\{ \left\lfloor 512 r_0^3 \right\rfloor + 1,  M(r_0,\omega) \right\},
		\end{aligned}
	\end{equation}
	where the constants $C_1$, $C_2(r_0)$, and $M(r_0,\omega)$ are specified in Theorem \ref{thm:main_result}, and $C_3$ is given by Theorem \ref{3.1}.  
	
	Next, we choose the initial data $\phi_1,\phi_2$ and boundary data $\psi$ for equation \eqref{1.1} as specified in \eqref{initial and boundary condition}, where the corresponding $u_0$ is determined by the parameters $r_0$, $\varepsilon$, and $m$ in \eqref{4.18}. Let $\mathcal{U}$ denote the unique solution to auxiliary problem \eqref{remaining term}. A direct computation verifies that 
	\begin{equation}\label{eq 4.17}
		u := u_0 + \mathcal{U}
	\end{equation}	
	constitutes the unique solution to problem \eqref{1.1}. By properties 2 of Theorem \ref{remaining term} and the smoothness of $u_0$, it follows that $u \in L^\infty(0,T;C^{1,\frac{1}{2}}(\overline{\Omega^\prime}))$ for any $\Omega^\prime \Subset \Omega \backslash \overline{D}$ with Lipschitz boundary. 
	
	Applying the mean value theorem to \eqref{u0 smalles} and \eqref{3 d} in Theorem \ref{thm:main_result} yields points $\mathbf{z}_i \in B_{3r_0}(\mathbf{x}_i) \backslash \overline{D}$ ($i=1,2,\dots,n$) such that for all $t \in [0,T]$:
	\begin{equation}\label{4.22}
		|\nabla u_0(\mathbf{z}_i,t)| \geqslant \frac{
			\tfrac{1}{16}\sqrt{2m+3} \, r_0^{-3/2} - \left(C_1 + C_2(r_0)\right)\varepsilon
		}{3r_0}.
	\end{equation}
	Combining \eqref{4.18}, \eqref{eq 4.17}, \eqref{4.22}, and properties 2 of Theorem \ref{3.1}, we obtain
	\begin{equation*}
		\begin{aligned}
			|\nabla u(\mathbf{z}_i,t)| &\geqslant \frac{
				\tfrac{1}{16}\sqrt{2m+3} \, r_0^{-3/2} - \left(C_1 + C_2(r_0)\right)\varepsilon
			}{3r_0} - C_3\varepsilon\\
			& >\frac{
				\tfrac{1}{16}\sqrt{1024r_0^3} \, r_0^{-3/2} - 1
			}{3r_0} - 1=\frac{1}{3r_0}-1>\mathcal{M},
		\end{aligned}
	\end{equation*}
	for a.e. $t \in [0,T]$. Here we choose $m \geqslant \left\lfloor 512 r_0^3 \right\rfloor + 1 $ in \eqref{4.18} to guarantee $\tfrac{1}{16}\sqrt{2m+3} \, r_0^{-3/2}\geqslant 2$.
	
	This implies
	\begin{equation*}
		\|\nabla u(\cdot,t)\|_{L^{\infty} (B_{3r_0}(\mathbf{x}_i) \backslash \overline{D})} \geq \mathcal{M}, \quad i=1,2,\dots,n,
	\end{equation*}
	for a.e. $t \in [0,T]$.
	
	Furthermore, under the above construction, the almost-blowup points are confined within the balls $B_{3r_0}(\mathbf{x}_i),i=1,2,...,n,$ which implies that 
	\begin{equation*}
		\operatorname{meas}\big\{ \mathbf{x} \in \bigcup_{i=1}^{n} B_{3r_0}(\mathbf{x}_i)\backslash \overline{D} :|\nabla u(\mathbf{x},t)|  > \mathcal{M} \big\} < 36n\pi r_0^3 < \frac{4n\pi}{3(\mathcal{M} + 1)^3}  \to 0
	\end{equation*}
	as $\mathcal{M} \to \infty$, for a.e. $t \in [0,T]$. The two-dimensional case follows analogously.
	
	The proof is complete.
\end{proof}
	
	\section{Proof of Theorem \ref{thm:main2}}\label{sec:5}

	In this section, we apply identical methodology to the nonlinear second-order hyperbolic equation \eqref{1.2}, proving Theorem~\ref{thm:main2}. Building upon the linear framework, we consider a nonlinear auxiliary problem \eqref{nonlinear hyperbolic equation with source term and zero initial condition} with zero initial conditions, derived from the problem \eqref{1.2}. Using a fixed-point argument in a suitably chosen Banach space, we establish existence and uniqueness of equation \eqref{nonlinear hyperbolic equation with source term and zero initial condition} while demonstrating the solution's H\" older norm remains small. Strategic parameter tuning of $\varepsilon$, $r_0$, and $m$ completes the proof.
	
	Let $\mathscr{U}:\mathbb{R}^d\times [0,T]\rightarrow \mathbb{C}$ satisfy
	\begin{equation}\label{nonlinear hyperbolic equation with source term and zero initial condition}
		\left\{
		\begin{array}{@{}l@{\quad}l}
			\hat{L}_2 \mathscr{U} + \hat{N}(\mathscr{U}, \partial_t \mathscr{U}, \nabla_\mathbf{x} \mathscr{U}) = F_2 & \text{in } \mathbb{R}^d \times (0,T], \\
			\mathscr{U}(\mathbf{x},0) = 0,\ \mathscr{U}_t(\mathbf{x},0) = 0 & \text{in } \mathbb{R}^{d},
		\end{array}
		\right.
	\end{equation}
	where 
	\begin{enumerate}
		\item $
		\hat{L}_2{\mathscr{U}}:=\partial _{tt}^{2}\mathscr{U}-\nabla \cdot (\mathbf{A}_2\nabla \mathscr{U})+\hat{\mathbf{b}}_2\cdot \nabla_{t,\mathbf{x}} \mathscr{U}+\hat{c}_2\mathscr{U}.
		$	
		\item $\hat{\mathbf{b}}_2(\mathbf{x},t)=\left(\sum_{k=2}^{l_0}\left(\mathrm{i}\omega\right)^{k-1}k\beta_{k}u_0^{k-1},\mathbf{b}_2+\sum_{k=2}^{l_0}\sum_{j=1}^{d}k\gamma_{k,j}\left(\partial_{{x}_j}u_0\right)^{k-1}\mathbf{e}_{j}\right),$ here $\mathbf{e}_{j}$ denotes the $d$-dimensional unit vector with its $j$-th component equal to 1.
		\item $
		\nabla_{t,\mathbf{x}} \mathscr{U}:=\left(\partial_t \mathscr{U},\nabla_\mathbf{x} \mathscr{U}\right),	\hat{c}_2(\mathbf{x},t) = c_2 + \sum_{k=2}^{l_0} k \alpha_k u_{0}^{k-1}.
		$
		
		\item The nonlinear term is given by \begin{align*}
			\hat{N}(\mathscr{U},\partial_{t} \mathscr{U},\nabla \mathscr{U}) \nonumber &= \sum_{k=2}^{l_0} \bigg( \alpha_k \sum_{i=2}^k \binom{k}{i} \mathscr{U}^i u_{0}^{k-i} 
			+ \left(\mathrm{i}\omega\right)^{k-i}\beta_k \sum_{i=2}^k \binom{k}{i} (\partial_t \mathscr{U})^i  u_{0}^{k-i} \nonumber \\
			&\quad +  \sum_{j=1}^d \gamma_{k,j}\sum_{i=2}^k \binom{k}{i} (\partial_{{x}_j} \mathscr{U})^i (\partial_{{x}_j} u_{0})^{k-i} \bigg).
		\end{align*}
		
		\item The source term is given by \begin{align*}\label{expression of source term}
			F_2(\mathbf{x},t) &= -\bigg[ \nabla \cdot \left(  \mathbf{I}-\mathbf{A}_2 \right) \nabla u_0 + \mathbf{b}_2 \cdot \nabla u_0 + c_2 u_0 \nonumber \\
			&\quad + \sum_{k=2}^{l_0} \left( (\alpha_k + \left(\mathrm{i}\omega\right)^k \beta_k) u_{0}^k +  \sum_{j=1}^d \gamma_{k,j}(\partial_{{x}_j} u_{0})^k \right) \bigg].
		\end{align*}
	\end{enumerate}
	Next we give some properties for the coefficients and source term in \eqref{nonlinear hyperbolic equation with source term and zero initial condition} which will be used in the proof of Theorem \ref{thm:5.2}.
	\begin{lemma}\label{Lemma 5.1}
		The coefficients and source term in equation \eqref{nonlinear hyperbolic equation with source term and zero initial condition} satisfy the following properties:
		\begin{enumerate}
			\item $\operatorname{supp} \hat{\mathbf{b}}_2(\cdot, t)$, $\operatorname{supp} \hat{c}_2(\cdot, t)$, and $\operatorname{supp} F_2(\cdot, t) \subset D$ for all $t \in [0, T]$.
			\item 
			$
			\|\hat{\mathbf{b}}_2(\cdot, t)\|_{W^{2,\infty}(D)}, \|\hat{c}_2(\cdot, t)\|_{W^{2,\infty}(D)}\leqslant C
			$ for a.e. $t \in [0, T]$.
			Here, constant $C$ depends only on $d$, $l_0$, $D$, $\|\mathbf{b}_2, c_2\|_{L^{\infty}(0,T; W^{2,\infty}(D))}$, and $\| \alpha_k, \beta_k, \gamma_{k,j}\|_{L^{\infty}(0,T; W^{2,\infty}(D))}$.
			\item $
			\|F_2(\cdot, t)\|_{H^{2}(D)} \leqslant C \varepsilon$ 
			for a.e. $t \in [0, T]$. Here the constant $C$ depends on $d$, $l_0$, $D$, $\|\mathbf{A}_2\|_{L^{\infty}(0,T; W^{3,\infty}(D))}$, $\|\mathbf{b}_2, c_2\|_{L^{\infty}(0,T; W^{2,\infty}(D))}$, and $\|\alpha_k, \beta_k, \gamma_{k,j}\|_{L^{\infty}(0,T; W^{2,\infty}(D))}$.
		\end{enumerate}
	\end{lemma}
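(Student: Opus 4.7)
The plan is to treat items (1)--(3) as straightforward consequences of the product/algebra structure of the defining expressions, combined with the key estimate $\|H_g\|_{H^5(D)} < \varepsilon$ supplied by \eqref{important estimate for H_g} and the compact support hypotheses on all of the original coefficients. Throughout I will exploit that $u_0(\mathbf{x},t) = H_g(\mathbf{x})\exp(\mathrm{i}\omega t)$ has spatial norms independent of $t$, with temporal factor of unit modulus, so all spatial $D$-norms reduce to norms of $H_g$.

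For item (1), I will simply inspect each summand in the definitions of $\hat{\mathbf{b}}_2$, $\hat{c}_2$, and $F_2$. Every term is either one of the original coefficients $\mathbf{A}_2 - \mathbf{I}$, $\mathbf{b}_2$, $c_2$, $\alpha_k$, $\beta_k$, $\gamma_{k,j}$ (all supported in $D$ by hypothesis) or a product of such a coefficient with powers of $u_0$ or $\partial_{x_j} u_0$. Since multiplying a $D$-supported function by any smooth function preserves support in $D$, the conclusion follows immediately.

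For item (2), I will first bound $u_0(\cdot,t)$ in $W^{2,\infty}(D)$: in dimensions $d\leqslant 3$ the chain of embeddings $H^5(D) \hookrightarrow C^{3,\frac{1}{2}}(\overline{D}) \hookrightarrow W^{2,\infty}(D)$ yields $\|u_0(\cdot,t)\|_{W^{2,\infty}(D)} \leqslant C\|H_g\|_{H^5(D)} \leqslant C\varepsilon \leqslant C$, using $\varepsilon \leqslant 1$. Since $W^{2,\infty}(D)$ is a Banach algebra, the Leibniz rule then gives, for each $k$,
\begin{equation*}
\|\beta_k u_0^{k-1}\|_{W^{2,\infty}(D)} \leqslant C\|\beta_k\|_{W^{2,\infty}(D)}\|u_0\|_{W^{2,\infty}(D)}^{k-1},
\end{equation*}
together with analogous bounds for the $\gamma_{k,j}(\partial_{x_j} u_0)^{k-1}$ and $\alpha_k u_0^{k-1}$ terms. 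Summing over $k$ and $j$ and adding the $\mathbf{b}_2$, $c_2$ contributions (which are a.e.-bounded in $W^{2,\infty}(D)$ by hypothesis) delivers the uniform bound.

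For item (3), I will split
\begin{equation*}
F_2 = F_2^{\mathrm{lin}} + F_2^{\mathrm{nl}}, \qquad F_2^{\mathrm{lin}} := -\nabla\cdot\bigl((\mathbf{I}-\mathbf{A}_2)\nabla u_0\bigr) - \mathbf{b}_2\cdot\nabla u_0 - c_2 u_0,
\end{equation*}
with $F_2^{\mathrm{nl}}$ collecting the remaining polynomial pieces. After expanding the divergence, each summand of $F_2^{\mathrm{lin}}$ is a product of a coefficient (at most $W^{3,\infty}$ for $\mathbf{A}_2$, at most $W^{2,\infty}$ for $\mathbf{b}_2, c_2$) with a derivative of $u_0$ of order at most two; taking the $H^2(D)$ norm therefore consumes at most four spatial derivatives on $u_0$, which are precisely supplied by $\|u_0\|_{H^4(D)} \leqslant \|H_g\|_{H^5(D)} < \varepsilon$. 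For the nonlinear part I will invoke the Banach algebra property of $H^2(D)$ in dimensions $d\leqslant 3$ (a consequence of $H^2\hookrightarrow L^\infty$) to obtain $\|u_0^k\|_{H^2(D)} \leqslant C^{k-1}\|u_0\|_{H^2(D)}^k \leqslant C^{k-1}\varepsilon^k$ and similarly $\|(\partial_{x_j} u_0)^k\|_{H^2(D)} \leqslant C^{k-1}\|u_0\|_{H^3(D)}^k \leqslant C^{k-1}\varepsilon^k$; multiplying by the $W^{2,\infty}$-bounded coefficients and using $k\geqslant 2$ with $\varepsilon \leqslant 1$ absorbs every nonlinear contribution into $C\varepsilon^2 \leqslant C\varepsilon$. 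Adding the two bounds completes item (3).

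Since each item reduces to a routine Leibniz/algebra bookkeeping around the estimate $\|H_g\|_{H^5(D)} < \varepsilon$, no serious analytic obstacle is expected. The only point requiring mild care is confirming at the outset that the regularity budget needed for $F_2 \in H^2(D)$ is genuinely covered by $H^5$-control of $H_g$; this is why the construction in the proof of Theorem~\ref{thm:main_result} was deliberately arranged to produce an $H^5$ approximation rather than something weaker, and why the smoothness hypotheses on $\mathbf{A}_2, \mathbf{b}_2, c_2, \alpha_k, \beta_k, \gamma_{k,j}$ were chosen exactly at the levels $W^{3,\infty}$ and $W^{2,\infty}$.
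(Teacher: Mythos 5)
Your proposal is correct and mirrors the paper's own proof almost step by step: item (1) by support inspection, item (2) by Sobolev embedding of $H^5(D)$ into Hölder/Lipschitz spaces plus the algebra (Leibniz) structure, and item (3) by the Banach algebra property of $H^2(D)$ combined with $\|H_g\|_{H^5(D)}<\varepsilon$. One small imprecision worth flagging: for the $\gamma_{k,j}(\partial_{x_j}u_0)^{k-1}$ terms in $\hat{\mathbf{b}}_2$ you actually need a $W^{3,\infty}(D)$ (not merely $W^{2,\infty}(D)$) bound on $u_0$, but this is already supplied by the embedding $H^5(D)\hookrightarrow C^{3,\frac{1}{2}}(\overline{D})$ you invoke, so the argument goes through.
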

	
	\begin{proof}
		Properties 1 follows immediately from the compact support in $D$ of $\mathbf{I} - \mathbf{A}_2(\cdot, t)$, $\mathbf{b}_2(\cdot, t)$, $c_2(\cdot, t)$, $\alpha_k(\cdot, t)$, $\beta_k(\cdot, t)$, and $\gamma_{k,j}(\cdot, t)$ for $t \in [0, T]$, $k = 2, \dots, l_0$, $j = 1, \dots, d$, combined with the expressions defining $\hat{\mathbf{b}}_2$, $\hat{c}_2$, and $F_2$.
		
		For properties 2, note that $\alpha_k$, $\beta_k$, and $\gamma_{k,j}$ are smooth, so all their derivatives are bounded in $D$. It remains to show $\partial_{\mathbf{x}}^{\alpha} u_0 \in L^{\infty}(D)$ for all multi-indices $\alpha$ with $|\alpha| \leqslant 3$. By the Sobolev embedding theorem, \eqref{important estimate for H_g}, and \eqref{u_0}, we have
		\begin{equation}\label{5.2}
			\|\partial_{\mathbf{x}}^{\alpha} u_0\|_{L^{\infty}(D)} \leqslant C \|\partial_{\mathbf{x}}^{\alpha} u_0\|_{H^{2}(D)} \leqslant C \|H_g\|_{H^5(D)} \leqslant C \varepsilon \leqslant C, \quad \forall |\alpha| \leqslant 3, \quad \forall t \in [0, T],
		\end{equation}
		where the final inequality holds since $\varepsilon \leqslant 1$ (See Theorem \ref{thm:main_result} and Remark \ref{cf 3.5}). Consequently,
		\begin{equation*}
			\|\hat{\mathbf{b}}_2(\cdot, t)\|_{W^{2,\infty}(D)} \leqslant C \left( \|\mathbf{b}_2(\cdot, t)\|_{W^{2,\infty}(D)} + \sum_{k=2}^{l_0} \|u_0\|_{W^{3,\infty}(D)}^{k-1} \right) \leqslant C \left( 1 + \sum_{k=1}^{l_0-1} \varepsilon^{k} \right) \leqslant C,
		\end{equation*}
		\begin{equation*}
			\|\hat{c}_2(\cdot, t)\|_{W^{2,\infty}(D)}\leqslant  C \left( \|c_2(\cdot, t)\|_{W^{2,\infty}(D)} + \sum_{k=2}^{l_0} \|u_0\|_{W^{2,\infty}(D)}^{k-1} \right) \leqslant C \left( 1 + \sum_{k=1}^{l_0-1} \varepsilon^{k} \right) \leqslant C,
		\end{equation*}
		for a.e. $t \in [0, T]$.
		
		For properties 3, we use the Banach algebra property of $H^2(D)$ and \eqref{5.2}. Since $\varepsilon \leqslant 1$, we have
		\begin{equation}\label{F_2}
			\|F_2(\cdot, t)\|_{H^{2}(D)} \leqslant C \sum_{k=1}^{l_0} \|H_g\|_{H^4(D)}^k \leqslant C \sum_{k=1}^{l_0} \varepsilon^k \leqslant C \varepsilon,
		\end{equation}
		for a.e. $t \in [0, T]$. 
	\end{proof}
	
We now establish the well-posedness of problem \eqref{nonlinear hyperbolic equation with source term and zero initial condition} via Banach's fixed point theorem, combined with an estimate for the nonlinear term $F_2$. This approach yields an explicit characterization of the solution's existence time $T$ in terms of the parameter $\varepsilon$. We show that $T$ admits a uniform positive lower bound independent of $\varepsilon$ and, moreover, becomes arbitrarily large for sufficiently small $\varepsilon$. Furthermore, we prove that the Hölder norm of the solution is bounded by $C \sqrt{\varepsilon}$ for some constant $C > 0$.
	
	\begin{theorem}\label{thm:5.2}
		There exists a time $ T > 0 $ given by \eqref{eq5.17}, depending on $ \varepsilon $, $ d $, $ l_0 $, $ D $, $ \|\mathbf{A}_2\|_{W^{1,\infty}(0,T; W^{3,\infty}(D))} $, $ \|\mathbf{b}_2, c_2\|_{L^{\infty}(0,T; W^{2,\infty}(D))} $, and $\|\alpha_k, \beta_k, \gamma_{k,j}\|_{L^{\infty}(0,T; W^{2,\infty}(D))} $, such that problem \eqref{nonlinear hyperbolic equation with source term and zero initial condition} admits a unique solution $ \mathscr{U} $ satisfying:
		$$
		\mathscr{U} \in L^{\infty}(0, T; H^3(\mathbb{R}^d)) \quad \text{and} \quad \mathscr{U}^{\prime} \in L^{\infty}(0, T; H^2(\mathbb{R}^d))
		$$
		with the following properties:
		\begin{enumerate}
			\item Uniform bounds for $T$: 
			$$
			T\geqslant T_{\mathrm{lower}} 	,
			$$
			where $T_{\mathrm{lower}}$ is given by \eqref{5.17}  and independent of $\varepsilon$.
			\item  H\"older regularity:
			$$
			\mathscr{U} \in L^{\infty}\big([0,T]; C^{1,\frac{1}{2}}(\mathbb{R}^d)\big).
			$$
			\item Smallness estimate:
			$$
			\|\mathscr{U}\|_{L^{\infty}(0,T;C^{1,\frac{1}{2}}(\mathbb{R}^d)) }\leqslant C_6 \sqrt{\varepsilon},
			$$
			where the constant $C_6$ depends only on $d$.
		\end{enumerate}
	\end{theorem}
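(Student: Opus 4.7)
The plan is to solve problem \eqref{nonlinear hyperbolic equation with source term and zero initial condition} by Banach's fixed-point theorem in a space that embeds into $C^{1,\frac{1}{2}}$ via Sobolev. Since $d \in \{2,3\}$, one has $H^3(\mathbb{R}^d) \hookrightarrow C^{1,\frac{1}{2}}(\mathbb{R}^d)$ and $H^2$ is a Banach algebra, so the natural working space on $[0,T]$ is
\begin{equation*}
X_T := \bigl\{ v : v \in L^\infty(0,T;H^3),\ v' \in L^\infty(0,T;H^2),\ v'' \in L^\infty(0,T;H^1),\ v''' \in L^\infty(0,T;L^2)\bigr\}
\end{equation*}
equipped with its natural sum norm $\|\cdot\|_{X_T}$. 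Outside $D$ the coefficients reduce to the Euclidean Laplacian and every source term vanishes, so finite propagation speed confines every iterate to a fixed compact enlargement $D'$ of $D$ for $t \in [0,T]$, allowing all Sobolev estimates to be performed on a bounded domain.

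Define $\Psi : v \mapsto \mathscr{U}$ as the solution map of the linear equation $\hat{L}_2 \mathscr{U} = F_2 - \hat{N}(v,v',\nabla v)$ with zero initial data. The first step is a higher-order energy estimate for $\hat{L}_2$ at the $X_T$-level, obtained by differentiating the equation in $t$ up to twice and iterating the Galerkin/energy procedure of Theorem \ref{3.1}. The coefficient regularities $\mathbf{A}_2 \in W^{1,\infty}(0,T;W^{3,\infty})$ and $\hat{\mathbf{b}}_2, \hat{c}_2 \in L^\infty(0,T;W^{2,\infty}(D))$ supplied by Lemma \ref{Lemma 5.1}, together with Gr\"onwall, should yield
\begin{equation*}
\|\mathscr{U}\|_{X_T} \leq C_4\sqrt{T}\, e^{C_5 T}\,\Lambda(G), \qquad \Lambda(G) := \|G\|_{H^2(0,T;L^2)} + \|G\|_{H^1(0,T;H^1)} + \|G\|_{L^2(0,T;H^2)},
\end{equation*}
with constants $C_4, C_5$ independent of $\varepsilon$.

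Next I would estimate the source $G = F_2 - \hat{N}(v,v',\nabla v)$ for $v$ in the ball $B_R := \{v \in X_T : \|v\|_{X_T} \leq R\}$. Lemma \ref{Lemma 5.1}(3) combined with the identity $\partial_t^j u_0 = (\mathrm{i}\omega)^j H_g\, e^{\mathrm{i}\omega t}$ gives $\Lambda(F_2) \leq C\sqrt{T}\,\varepsilon$. Every term of $\hat{N}$ has the schematic form $(\partial^\alpha v)^i (\partial^\alpha u_0)^{k-i}$ with $i \geq 2$, so the Banach-algebra property of $H^2(D')$ together with the uniform bound $\|u_0\|_{W^{3,\infty}(D)} \leq C\varepsilon$ from \eqref{5.2} produces
\begin{equation*}
\Lambda\bigl(\hat{N}(v,v',\nabla v)\bigr) \leq C\sqrt{T}\,\|v\|_{X_T}^2, \quad \Lambda\bigl(\hat{N}(v_1,\cdot,\cdot) - \hat{N}(v_2,\cdot,\cdot)\bigr) \leq C\sqrt{T}\,(R+\varepsilon)\|v_1-v_2\|_{X_T},
\end{equation*}
valid whenever $R,\varepsilon \leq 1$. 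Choosing the ball radius $R := C_6\sqrt{\varepsilon}$ matches the linear source $\sqrt{T}\,\varepsilon$ with the quadratic self-contribution $\sqrt{T}\,R^2 = \sqrt{T}\,\varepsilon$, so both the self-mapping condition $\|\Psi(v)\|_{X_T} \leq R$ and the contraction condition reduce to an inequality of the form $C\sqrt{T}\, e^{C_5 T}\sqrt{\varepsilon} \leq \tfrac{1}{2}$. This yields an explicit lifespan $T = T(\varepsilon)$ satisfying $T(\varepsilon) \to \infty$ as $\varepsilon \downarrow 0$, and in particular a uniform lower bound $T \geq T_{\mathrm{lower}} > 0$ independent of $\varepsilon \in (0,1]$. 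Banach's fixed-point theorem then furnishes the unique $\mathscr{U} \in B_R$, and the Sobolev embedding $H^3 \hookrightarrow C^{1,\frac{1}{2}}$ immediately delivers properties (2) and (3) with $\|\mathscr{U}\|_{L^\infty(0,T;C^{1,\frac{1}{2}}(\mathbb{R}^d))} \leq C_6\sqrt{\varepsilon}$.

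The main obstacle I anticipate is performing the higher-order energy estimate while keeping all constants uniform in $\varepsilon$ and reaching the $H^3$-spatial, $W^{1,\infty}(0,T;H^2)$-temporal regularity demanded simultaneously by the embedding into $C^{1,\frac{1}{2}}$ and by the Banach-algebra control of $\hat{N}$. Two time-differentiations of the equation produce terms involving $\partial_t \hat{\mathbf{b}}_2$ and $\partial_t \hat{c}_2$, hence $\partial_t(u_0^{k-1})$, which generate powers of $\omega$ and must be absorbed into Gr\"onwall without enlarging $C_5$. A closely related subtlety is that the exponent $\tfrac{1}{2}$ in $R = C_6\sqrt{\varepsilon}$ arises precisely from balancing $\sqrt{T}\,\varepsilon$ against $\sqrt{T}\,R^2$; any sub-optimal weight in $\Lambda$ would force $R \sim \varepsilon^{\beta}$ with $\beta < \tfrac{1}{2}$ and destroy the $\varepsilon$-independent lower bound on $T$ that underlies the long-time existence claim in Theorem~\ref{thm:main2}.
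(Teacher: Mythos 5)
Your overall skeleton---Banach fixed-point with an $H^3$-level energy estimate, a ball of radius $\sim\sqrt{\varepsilon}$, Sobolev embedding $H^3\hookrightarrow C^{1,\frac12}$, and balancing the source and self-interaction terms to get an $\varepsilon$-independent lower bound on $T$---matches the paper's strategy in spirit. But there is a concrete gap in the energy estimate step that would make your plan fail as written.

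You propose to reach the $H^3\times H^2$ level by ``differentiating the equation in $t$ up to twice,'' and accordingly your working space $X_T$ tracks $v''\in L^\infty(0,T;H^1)$ and $v'''\in L^\infty(0,T;L^2)$. The hypotheses do not permit this: the coefficients $\mathbf{b}_2, c_2$ lie only in $L^\infty\bigl(0,T;W^{2,\infty}\bigr)$, hence $\hat{\mathbf{b}}_2,\hat{c}_2$ from Lemma \ref{Lemma 5.1} also have no time regularity, and $\mathbf{A}_2$ has only one bounded time derivative. Differentiating the equation even once in $t$ produces $\partial_t\hat{\mathbf{b}}_2$ and $\partial_t\hat{c}_2$, which are not controlled; two differentiations are flatly unavailable. (You flag ``terms involving $\partial_t\hat{\mathbf{b}}_2$'' as a difficulty about keeping constants uniform in $\varepsilon$, but the issue is more basic: those objects need not exist as bounded functions.) The paper instead obtains $H^3/H^2$ regularity by commuting the operator with \emph{spatial} derivatives $\partial_{\mathbf{x}}^\beta$, $|\beta|\le 2$, via the commutator identity $\hat L_2\partial_{\mathbf{x}}^\beta\mathscr{U} = \partial_{\mathbf{x}}^\beta\hat L_2\mathscr{U} + [\hat L_2,\partial_{\mathbf{x}}^\beta]\mathscr{U}$, which only consumes spatial regularity of $\mathbf{A}_2, \hat{\mathbf{b}}_2, \hat{c}_2$ (which they have) and never differentiates coefficients in time beyond the single $\partial_t\mathbf{A}_2$ that appears in the base-level energy identity. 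Theorem \ref{thm:5.2} also only asserts $\mathscr{U}\in L^\infty H^3$ and $\mathscr{U}'\in L^\infty H^2$; the space you want does not need $v''$ or $v'''$ at all.

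A secondary structural difference: the paper carries two norms---the strong norm $|||\cdot|||$ ($H^3\times H^2$) used to define the invariant ball $X_\varepsilon$, and the weak norm $\|\cdot\|$ ($H^1\times L^2$) in which the contraction \eqref{eq5.12} is proved. This decouples the boundedness estimate (needed at high order to close the nonlinearity) from the contraction estimate (where only a low-order Lipschitz bound is needed), which substantially simplifies the Lipschitz computation for $\hat N$. You attempt to contract directly in the full $X_T$ norm, which would force you to prove Lipschitz continuity of $\hat N$ at the $H^2$ level (and in your space also at the $v'',v'''$ level), a heavier computation than the paper's $L^2$-level difference estimate \eqref{5.16}. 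Finally, your $\sqrt{T}$ scaling versus the paper's $T$ scaling in the source bound is an unimportant discrepancy---both yield the same qualitative conclusion that $T\to\infty$ as $\varepsilon\to0$ with a uniform positive lower bound.
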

	\begin{proof}The proof proceeds in four steps.
		
		\medskip
		\textbf{Step 1: Setup of fixed-point framework.} 
		
		Define the Banach space
		$$
		X:=\left\{{u} \in L^{\infty}\left(0, T ; H^1\left(\mathbb{R}^d\right)\right) \mid {u}^{\prime} \in L^{\infty}\left(0, T ; L^2\left(\mathbb{R}^d\right)\right)\right\}
		$$
		with norm
		$$
		\|{u}\|:=\underset{0 \leq t \leq T}{\operatorname{ess} \sup }\left(\|{u}(t)\|_{H^1\left(\mathbb{R}^d\right)}+\left\|{u}^{\prime}(t)\right\|_{L^2\left(\mathbb{R}^d\right)}\right).
		$$
		Introduce the stronger norm 
		$$
		|||{u}|||:=\underset{0 \leq t \leq T}{\operatorname{ess} \sup }\left(\|{u}(t)\|_{H^3\left(\mathbb{R}^d\right)}+\left\|{u}^{\prime}(t)\right\|_{H^{2}\left(\mathbb{R}^d\right)}\right) ,
		$$
		and the closed subset
		$$
		X_\varepsilon:=\left\{{u} \in X \mid\ ||| {u}||| \leq \sqrt{\varepsilon},{u}(0)=0, {u}^{\prime}(0)=0\right\} .
		$$
		For \(\mathscr{V} \in X_\varepsilon\), define \(\mathscr{U} = \mathscr{F}[\mathscr{V}]\) as the solution to the linear initial-value problem
		\begin{equation}\label{linear}
			\left\{
			\begin{aligned}
				&\hat{L}_2 \mathscr{U}= F_2 -  \hat{N}(\mathscr{V}, \nabla \mathscr{V}, \partial_t \mathscr{V}) &&\text{on } \mathbb{R}^d \times (0,T], \\
				&\mathscr{U}(\mathbf{x},0) = 0, \quad \partial_t\mathscr{U}(\mathbf{x},0) = 0 && \text{in } \mathbb{R}^d.
			\end{aligned}
			\right.
		\end{equation}
		By Hahn-Banach Theorem  (see \cite{Sogge}, Theorem 3.2), we know that there exists a unique solution $\mathscr{U}\in L^{\infty}(0,T;H^{3}(\mathbb{R}^d))\cap W^{1,\infty}(0,T;H^{2}(\mathbb{R}^d))$ to problem \eqref{linear}.
		
		\medskip
		\textbf{Step 2: Energy estimate for the problem \eqref{linear}.}
		
		We claim that we have the estimate 
		\begin{equation}\label{key estimate}
			|||\mathscr{U}|||\leqslant C_4(T)T\varepsilon,
		\end{equation}
		where the constant \(C_4(T)\) depends continuously on \(T\), \(d\), \(l_0\), \(D\), \(\|\mathbf{A}_2\|_{W^{1,\infty}(0,T; W^{3,\infty}(D))}\), \(\|\mathbf{b}_2, c_2\|_{L^{\infty}(0,T; W^{2,\infty}(D))}\), and \(\|\alpha_k, \beta_k, \gamma_{k,j}\|_{L^{\infty}(0,T; W^{2,\infty}(D))}\), but not on \(\varepsilon\). Moreover, \(C_4(T)\) is monotonically increasing in \(T\) with \(C_4(0) < \infty\).
		
		To prove \eqref{key estimate},  define $$
		e(t):=\int_{\mathbb{R}^d}|\partial_{t}\mathscr{U}|^2+\nabla \overline{\mathscr{U}} \cdot (\mathbf{A}_2 \nabla \mathscr{U})d\mathbf{x}
		.
		$$ 
		Using Lemma \ref{Lemma 5.1}, we calculate
		\begin{equation}\label{4.9}
			\begin{aligned}
				\frac{d}{d t}e(t) &=  \int_{\mathbb{R}^d} 2\Re\biggl[ \Bigl( F_2 - \hat{\mathbf{b}}_2 \cdot \nabla_{t,\mathbf{x}} \mathscr{U} - \hat{c}_2 \mathscr{U} \\
				&\quad - \hat{N}(\mathscr{V},\partial_{t}\mathscr{V},\nabla_\mathbf{x}\mathscr{V}) \Bigr) \overline{\partial_t \mathscr{U}} \biggr] 
				+ \nabla \overline{\mathscr{U}} \cdot (\partial_{t}\mathbf{A}_2 \nabla \mathscr{U})  \, d\mathbf{x} \\
				&\leqslant C \biggl( e(t) + e(t)^{1/2} \Bigl( \|\mathscr{U}(\cdot,t)\|_{H^1(\mathbb{R}^d)} + \|F_2(\cdot,t)\|_{L^2(\mathbb{R}^d)} \\
				&\quad + \|\hat{N}(\mathscr{V},\partial_{t}\mathscr{V},\nabla_\mathbf{x}\mathscr{V})\|_{L^2(\mathbb{R}^d)} \Bigr) \biggr).
			\end{aligned}
		\end{equation}
		This gives 
		\begin{equation*}
			\frac{d}{d t}e(t)^{\frac{1}{2}}\leqslant C\bigg(e(t)^{\frac{1}{2}}+\|\mathscr{U}(\cdot,t)\|_{H^1(\mathbb{R}^d)} +||F_2(\cdot,t)||_{L^2}+\|\hat{N}(\mathscr{V},\partial_{t}\mathscr{V},\nabla_\mathbf{x}\mathscr{V})\|_{L^2(\mathbb{R}^d)}\bigg)
		\end{equation*}
		which in turn implies that 
		\begin{equation*}
			\begin{aligned}
				&\quad	\frac{d}{d t}\left(e(t)^{\frac{1}{2}}\exp(-Ct)\right)\\
				&\leqslant C\left(\|\mathscr{U}(\cdot,t)\|_{H^1(\mathbb{R}^d)} +||F_2(\cdot,t)||_{L^2(\mathbb{R}^d)}+\|\hat{N}(\mathscr{V},\partial_{t}\mathscr{V},\nabla_\mathbf{x}\mathscr{V})\|_{L^2(\mathbb{R}^d)}\right)\exp(-Ct)\\
				&\leqslant C\left(\|\mathscr{U}(\cdot,t)\|_{H^1(\mathbb{R}^d)} +||F_2(\cdot,t)||_{L^2(\mathbb{R}^d)}+\|\hat{N}(\mathscr{V},\partial_{t}\mathscr{V},\nabla_\mathbf{x}\mathscr{V})\|_{L^2(\mathbb{R}^d)}\right).
			\end{aligned}
		\end{equation*}
		Integrate over $[0,t]$ to obtain
		\begin{equation*}
			e(t)^{\frac{1}{2}}\leqslant \exp\left(Ct\right)C\left(\int_{0}^{t}\|\mathscr{U}(\cdot,\tau)\|_{H^1(\mathbb{R}^d)} +||F_2(\cdot,\tau)||_{L^2(\mathbb{R}^d)}+\|\hat{N}(\mathscr{V},\partial_{t}\mathscr{V},\nabla_\mathbf{x}\mathscr{V})\|_{L^2(\mathbb{R}^d)}d\tau \right).
		\end{equation*}
		To estimate the $L^2$ norm of $\mathscr{U}(\cdot,t)$, we  use the fundamental theorem of calculus and Minkowski's integral inequality to see that 
		\begin{equation*}\label{eq:4.9}
			||\mathscr{U}(\cdot,t)||_{L^2(\mathbb{R}^d)}\leqslant \int_{0}^{t}||\partial_{t}\mathscr{U}(\cdot,\tau)||_{L^2(\mathbb{R}^d)} d\tau.
		\end{equation*}
		Combining the above two inequalities, we obtain 
		\begin{equation*}
			\begin{aligned}
				\|\partial_{t}\mathscr{U}(\cdot,t)\|_{L^2(\mathbb{R}^d)}+\|\mathscr{U}(\cdot,t)\|_{H^1(\mathbb{R}^d)} 
				\leqslant C\left(1+\exp(CT)\right)\int_{0}^{t}  \|\mathscr{U}(\cdot,\tau)\|_{H^1(\mathbb{R}^d)} + \|\partial_{t}\mathscr{U}(\cdot,\tau)\|_{L^2(\mathbb{R}^d)}\\
				+||F_2(\cdot,\tau)||_{L^2}+\|\hat{N}(\mathscr{V},\partial_{t}\mathscr{V},\nabla_\mathbf{x}\mathscr{V})\|_{L^2(\mathbb{R}^d)}d\tau ,
			\end{aligned}
		\end{equation*}
		for a.e. $t \in [0,T]$. Applying Gronwall's inequality to above inequality, we have
		\begin{equation}\label{eq:4.9 }
			\|\partial_{t}\mathscr{U}(\cdot,t)\|_{L^2(\mathbb{R}^d)} + \|\mathscr{U}(\cdot,t)\|_{H^1(\mathbb{R}^d)} \leqslant C_T \int_{0}^{t} ||F_2(\cdot,\tau)||_{L^2}+\|\hat{N}(\mathscr{V},\partial_{t}\mathscr{V},\nabla_\mathbf{x}\mathscr{V})\|_{L^2(\mathbb{R}^d)}d\tau ,
		\end{equation}
		where $C_T=C\left(1+\exp(CT)\right) \exp(C\left(1+\exp(CT)\right))$ .
		
		For any fixed multiindice $\beta$ with $|\beta|\leqslant 2$, 
		$$
		\hat{L}_2\partial_{\mathbf{x}}^{\beta}\mathscr{U}=\partial_{\mathbf{x}}^{\beta}\hat{L}_2\mathscr{U}+[\hat{L}_2,\partial_\mathbf{x}^\beta]\mathscr{U},
		$$
		where $[\hat{L}_2,\partial_\mathbf{x}^\beta] = \hat{L}_2\partial_\mathbf{x}^\beta - \partial_\mathbf{x}^\beta \hat{L}_2$ denotes the commutator, 
		which is a differential operator of order at most 3 in spatial derivatives 
		and at most first order in both space and time variables for mixed derivatives. Repeating for $\partial_\mathbf{x}^{\beta} \mathscr{U}$ the approach used to obtain \eqref{eq:4.9 } yields
		\begin{equation}\label{general estimate for mathscr{U}}
			\begin{aligned}
				&\|\partial_{t}\partial_\mathbf{x}^\beta \mathscr{U}(\cdot,t)\|_{L^2(\mathbb{R}^d)} + \|\partial^{\beta}_{\mathbf{x}}\mathscr{U}(\cdot,t)\|_{H^1(\mathbb{R}^d)}\\
				&\leqslant C_T\bigg( \int_{0}^{t} \|\partial_{\mathbf{x}}^{\beta}\hat{L}_2 \mathscr{U}(\cdot,\tau)\|_{L^2(\mathbb{R}^d)} + \|[\hat{L}_2,\partial_\mathbf{x}^\beta]\mathscr{U}\|_{L^2(\mathbb{R}^d)}\, d\tau \bigg)\\
				&\leqslant  C_T\bigg(\int_{0}^{t} \|\partial_{\mathbf{x}}^{\beta}F_2(\cdot,\tau)\|_{L^2(\mathbb{R}^d)} +\|\partial_{\mathbf{x}}^{\beta}\hat{N}(\mathscr{V},\partial_{t}\mathscr{V},\nabla_\mathbf{x}\mathscr{V})\|_{L^2(\mathbb{R}^d)}\\
				&\quad \quad\quad+\| \mathscr{U}(\cdot,\tau)\|_{H^{3}(\mathbb{R}^d)}+\|\partial_{t} \mathscr{U}(\cdot,t)\|_{H^1(\mathbb{R}^d)}\, d\tau \bigg)\\
				& \leqslant  C_T\bigg(\int_{0}^{t} \|F_2(\cdot,\tau)\|_{H^2(\mathbb{R}^d)}+\|\hat{N}(\mathscr{V},\partial_{t}\mathscr{V},\nabla_\mathbf{x}\mathscr{V})\|_{H^2(\mathbb{R}^d)}\\
				&\quad \quad\quad+\| \mathscr{U}(\cdot,\tau)\|_{H^{3}(\mathbb{R}^d)}+\|\partial_{t} \mathscr{U}(\cdot,\tau)\|_{H^1(\mathbb{R}^d)}\, d\tau \bigg).
			\end{aligned}
		\end{equation}
		Summing over $\beta$ with $|\beta| \leqslant 2$ to $\eqref{general estimate for mathscr{U}}$ and applying Gronwall's inequality, we obtain that
		\begin{equation}\label{5.10}
			||\mathscr{U}(t)||_{H^{3}(\mathbb{R}^d)}+||\mathscr{U}^{\prime}(t)||_{H^{2}(\mathbb{R}^d)} \leqslant C_T\int_{0}^{t} \|F_2(\cdot,\tau)\|_{H^2(\mathbb{R}^d)} +\|\hat{N}(\mathscr{V},\partial_{t}\mathscr{V},\nabla_\mathbf{x}\mathscr{V})\|_{H^2(\mathbb{R}^d)}d\tau ,
		\end{equation}
		for a.e. $t \in [0,T]$. Since \(\mathscr{V} \in X_\varepsilon\) and \(\varepsilon < 1\), Lemma \ref{Lemma 5.1}, the Banach algebra property of \(H^2\), and the explicit structure of \(\hat{N}\) imply that for almost every \(t \in [0,T]\), the following estimate holds:
		\begin{equation}\label{5.11}
			\begin{aligned}
				&\ \quad\|\hat{N}(\mathscr{V},\partial_{t}\mathscr{V},\nabla_\mathbf{x}\mathscr{V})\|_{H^2(\mathbb{R}^d)}\\
				&=\|\hat{N}(\mathscr{V},\partial_{t}\mathscr{V},\nabla_\mathbf{x}\mathscr{V})\|_{H^2(D)}\\
				&\leqslant C\sum_{k=2}^{l_0}\sum_{i=2}^{k}\|\mathscr{V}\|^{i}_{H^2(D)}\|u_0\|^{k-i}_{H^2(D)}+\|\partial_{t}\mathscr{V}\|^{i}_{H^2(D)}\|u_0\|^{k-i}_{H^2(D)}+\|\nabla_{\mathbf{x}}\mathscr{V}\|^{i}_{H^2(D)}\|\nabla u_0\|^{k-i}_{H^2(D)}\\
				&\leqslant C\sum_{k=2}^{l_0}\sum_{i=2}^{k}\varepsilon^{\frac{i}{2}}\varepsilon^{k-i} \leqslant C\varepsilon,
			\end{aligned}
		\end{equation}
		Combining \eqref{5.10}, \eqref{5.11} and properties 3 in Lemma \ref{Lemma 5.1}, \eqref{key estimate} follows.

		\medskip
		\textbf{Step 3: Contraction mapping argument and well-posedness of problem \ref{nonlinear hyperbolic equation with source term and zero initial condition}.}
		
		By properly choosing $T$, we show that 
		\begin{equation}\label{eq5.11}
			\mathscr{F}: X_\varepsilon \to X_\varepsilon
		\end{equation}
		and 
		\begin{equation}\label{eq5.12}
			\|\mathscr{F}[\mathscr{V}_1]-\mathscr{F}[\mathscr{V}_2]\|\leqslant \frac{1}{2}\|\mathscr{V}_1-\mathscr{V}_2\|, \ \forall \mathscr{V}_1,\mathscr{V}_2 \in X_\varepsilon. 
		\end{equation}
		To ensure \eqref{eq5.11}, by \eqref{key estimate}, choose \(T\) such that
		\begin{equation}\label{eq:5.13}
			C_4(T) T \leqslant \frac{1}{\sqrt{\varepsilon}}.
		\end{equation}
		For \eqref{eq5.12}, let \(\mathscr{U}_1 = \mathscr{F}[\mathscr{V}_1]\), \(\mathscr{U}_2 = \mathscr{F}[\mathscr{V}_2]\), and \(\mathscr{W} = \mathscr{U}_1 - \mathscr{U}_2\). Then \(\mathscr{W}\) satisfies
		\begin{equation}
			\left\{
			\begin{aligned}
				&\hat{L}_2\mathscr{W}=  -\big[\hat{N}(\mathscr{V}_1, \nabla \mathscr{V}_1, \partial_t \mathscr{V}_1)-\hat{N}(\mathscr{V}_2, \nabla \mathscr{V}_2, \partial_t \mathscr{V}_2)\big] && \text{in } \mathbb{R}^d \times (0,T], \\
				&\mathscr{W}(\mathbf{x},0) = 0, \quad \partial_t \mathscr{W}(\mathbf{x},0) = 0 && \text{in } \mathbb{R}^d.
			\end{aligned}
			\right.
		\end{equation} By using Lemma \eqref{Lemma 5.1},  similar to \eqref{4.9}, we have 
		\begin{equation}\label{5.15}
			\begin{aligned}
				&~\quad \frac{d}{d t} \int_{\mathbb{R}^d}|\partial_{t}\mathscr{W}|^2+\nabla \overline{\mathscr{W}} \cdot (\mathbf{A}_2\nabla \mathscr{W}) +|\mathscr{W}|^2 d\mathbf{x}\\
				&\leqslant C\int_{\mathbb{R}^d}\left| \partial _t \mathscr{W} \right|^2+|\nabla \mathscr{W}|^2+|\mathscr{W}|^2\\
				&\quad \quad \quad+\left|\hat{N}(\mathscr{V}_1, \nabla \mathscr{V}_1, \partial_t \mathscr{V}_1)-\hat{N}(\mathscr{V}_2, \nabla \mathscr{V}_2, \partial_t \mathscr{V}_2) \right|^2d\mathbf{x}\\
				&\leqslant C\int_{\mathbb{R}^d}|\partial_{t}\mathscr{W}|^2+\nabla \overline{\mathscr{W}} \cdot (\mathbf{A}_2 \nabla \mathscr{W}) +|\mathscr{W}|^2\\
				&\quad \quad \quad +{\varepsilon}\left(\left|\mathscr{V}_1-\mathscr{V}_2 \right|^2+\left|\nabla\mathscr{V}_1-\nabla\mathscr{V}_2 \right|^2+\left|\partial_t\mathscr{V}_1-\partial_t\mathscr{V} \right|^2\right) d\mathbf{x},
			\end{aligned}		
		\end{equation}
		a.e. $t\in \left(0,T\right)$. Here, using the explicit form of $\hat{N}$, estimate \eqref{5.2}, and the fact that $\mathscr{V}_1,\mathscr{V}_2 \in X_{\varepsilon}$ implies $\|\mathscr{V}_i\|_{W^{1,\infty}(\mathbb{R}^d)} ,\|\partial_t\mathscr{V}_i\|_{L^{\infty}(\mathbb{R}^d)}\leqslant C\sqrt{\varepsilon}$ for $i=1,2$, we estimate the difference $\hat{N}(\mathscr{V}_1, \nabla \mathscr{V}_1, \partial_t \mathscr{V}_1)-\hat{N}(\mathscr{V}_2, \nabla \mathscr{V}_2, \partial_t \mathscr{V}_2)$ as follows:
		\begin{equation*}
			\begin{aligned}
				&\int_{\mathbb{R}^d}\bigg|\hat{N}(\mathscr{V}_1, \nabla \mathscr{V}_1, \partial_t \mathscr{V}_1)-\hat{N}(\mathscr{V}_2, \nabla \mathscr{V}_2, \partial_t \mathscr{V}_2) \bigg|^2d\mathbf{x}\\
				&=\int_{\mathbb{R}^d} \bigg|\sum_{k=2}^{l_0} \bigg( \alpha_k \sum_{i=2}^k \binom{k}{i} u_{0}^{k-i} \left(\mathscr{V}_1-\mathscr{V}_2\right)\sum_{\nu =0}^{i-1}\mathscr{V}^{\nu}_{1}\mathscr{V}^{i-1-\nu}_{2}  \\
				&\quad+ \left(\mathrm{i}\omega\right)^{k-i}\beta_k \sum_{i=2}^k \binom{k}{i} u_{0}^{k-i} \left(\partial_{t}\mathscr{V}_1-\partial_{t}\mathscr{V}_2\right)\sum_{\nu =0}^{i-1}(\partial_{t}\mathscr{V}_1)^{\nu}(\partial_{t}\mathscr{V}_2)^{i-1-\nu}  \\
				&\quad +  \sum_{j=1}^d \gamma_{k,j}\sum_{i=2}^k \binom{k}{i} (\partial_{\mathrm{x}_j} u_{0})^{k-i} \left(\partial_{\mathrm{x}_j}\mathscr{V}_1-\partial_{\mathrm{x}_j}\mathscr{V}_2\right)\sum_{\nu =0}^{i-1}(\partial_{\mathrm{x}_j}\mathscr{V}_1)^{\nu}(\partial_{\mathrm{x}_j}\mathscr{V}_2)^{i-1-\nu}  \bigg) \bigg|^2 d\mathbf{x}\\
				&\leqslant C \sum_{k=2}^{l_0}\sum_{i=2}^{k}\varepsilon^{2k-2i}\varepsilon^{i-1}\int_{\mathbb{R}^d} |\mathscr{V}_1-\mathscr{V}_2|^2+|\partial_{t}\mathscr{V}_1-\partial_{t}\mathscr{V}_2|^2+|\nabla_{\mathbf{x}}{\mathscr{V}_1}-\nabla_{\mathbf{x}}{\mathscr{V}_2}|^2\\
				&\leqslant C \varepsilon \int_{\mathbb{R}^d} |\mathscr{V}_1-\mathscr{V}_2|^2+|\partial_{t}\mathscr{V}_1-\partial_{t}\mathscr{V}_2|^2+|\nabla{\mathscr{V}_1}-\nabla{\mathscr{V}_2}|^2,
			\end{aligned}
		\end{equation*}
		for a.e. $t \in [0,T]$. Invoking Gronwall's inequality to \eqref{5.15}, and using the definition of \(\|\cdot\|\),  we deduce that 
		\begin{equation}\label{5.16}
			\begin{aligned}
				\underset{0 \leq t \leq T}{\operatorname{ess} \sup }&\int_{\mathbb{R}^d} \left| \partial _t \mathscr{W} \right|^2+|\nabla \mathscr{W}|^2+|\mathscr{W}|^2 d\mathbf{x}\\
				&\leqslant C_5(T){\varepsilon}\int_{0}^{T}\int_{\mathbb{R}^d} \left|\mathscr{V}-\hat{\mathscr{V}} \right|^2+\left|\nabla\mathscr{V}-\nabla\hat{\mathscr{V}} \right|^2+\left|\partial_t\mathscr{V}-\partial_t\hat{\mathscr{V}} \right|^2d\mathbf{x} dt\\
				&\leqslant C_5(T){\varepsilon}T||\mathscr{V}-\hat{\mathscr{V}}||^2,
			\end{aligned}
		\end{equation}
		where \(C_5(T)\) has the same dependencies as \(C_4(T)\) and is monotonically increasing in \(T\) with \(C_5(0) < \infty\). 	To ensure \(\|\mathscr{W}\|=\| \mathscr{F}[\mathscr{V}_1]-\mathscr{F}[\mathscr{V}_2]\| \leqslant \frac{1}{2} \|\mathscr{V}_1 - \mathscr{V}_2\|\), we require by \eqref{5.16}, 
		\begin{equation}\label{eq C_T(5)}
			C_5(T) T \leqslant \frac{1}{4{\varepsilon}}.
		\end{equation}
		Set $T=T_{\text{lifespan}}$, where
		\begin{equation}\label{eq5.17}
			T_{\mathrm{lifespan}}:=\max\{T|	C_4(T) T \leqslant \frac{1}{\sqrt{\varepsilon}}\ \text{and}\ C_5(T) T \leqslant \frac{1}{4{\varepsilon}}\}.
		\end{equation}
		Then \eqref{eq5.11} and \eqref{eq5.12} hold simultaneously.
		
		Now we are ready to prove the existence and uniqueness of problem \eqref{nonlinear hyperbolic equation with source term and zero initial condition}. Select any $\mathscr{U}_0 \in X_\varepsilon$, according to  Banach's fixed point  Theorem, if we inductively define $\mathscr{U}_{k+1}:=\mathscr{F}\left[\mathscr{U}_k\right]$ for $k=0, \ldots$, then ${\mathscr{U}}_k \rightarrow {\mathscr{U}}$ in $X_\varepsilon$ and
		$$
		\mathscr{F}[\mathscr{U}]=\mathscr{U}.
		$$
		Furthermore, since $|||\mathscr{U}_k|||\leq \sqrt{\varepsilon}$, we have $\mathscr{U} \in X_\varepsilon$. Uniqueness follows from \eqref{eq5.12}. By Sobolev embedding theorem, properties 1 and 2 hold.

		\medskip
		\textbf{Step 4: Uniform lower bounds for \(T\) of problem \eqref{nonlinear hyperbolic equation with source term and zero initial condition}.}
		Define	
		\begin{equation}\label{5.17}
			T_{\mathrm{lower}}:=\max\{T|	C_4(T) T \leqslant 1\ \text{and}\ C_5(T) T \leqslant \frac{1}{4}\}.
		\end{equation}
		Since \(\varepsilon < 1\), we have \(\frac{1}{\sqrt{\varepsilon}}>1\) and \(\frac{1}{4 \varepsilon} > \frac{1}{4}\). As \(C_4(T)T\) and \(C_5(T)T\) are continuous and increasing, it follows that \( T_{\mathrm{lifespan}}\geqslant T_{\mathrm{lower}} \), with \(T_{\mathrm{lower}}\) independent of \(\varepsilon\) which implies that properties 3 holds.
		
		The proof is complete.
	\end{proof}

	\begin{remark}\label{remark 5.3}
		The argument in the proof of Theorem~\ref{thm:5.2} follows the methodology of Theorem 3 in Chapter 12.2 of \cite{Evans}. To guarantee that the mapping $\mathscr{F}$ preserves the space $X_{\varepsilon}$ and satisfies the contraction property, the time $T$ is chosen to satisfy conditions \eqref{eq:5.13} and \eqref{eq C_T(5)}. Notably, as $\varepsilon$ decreases, the existence time $T$ for problem \eqref{nonlinear hyperbolic equation with source term and zero initial condition} increases—thereby extending the scope of Theorem 3 in Chapter 12.2 of \cite{Evans}. This improvement arises from three key structural features: (i) the problem \eqref{nonlinear hyperbolic equation with source term and zero initial condition} has trivial initial data; (ii) the polynomial nonlinearity $\hat{N}$ has compact support in $D$ and an explicit form; and (iii) the source term $F_2$ has an explicit expression and its $H^2(D)$-norm is small in terms of $\varepsilon$. Together with the careful construction of the closed set $X_\varepsilon$ and the properties of the function $u_0$, these features enable the existence of a solution to problem \eqref{nonlinear hyperbolic equation with source term and zero initial condition} on arbitrarily large time intervals $T$.
	\end{remark}
	
We now proceed to prove Theorem \ref{1.2}. As the argument closely parallels that of Theorem \ref{1.1}, we provide the complete details below for clarity and self-containedness.
\begin{proof}[Proof of Theorem \ref{1.2}.]\label{proof 1.2}
	In the three-dimensional case, we select the predetermined parameters $r_0$, $\varepsilon$, and $m$ as follows:
	\begin{equation}\label{eeq 5.20}
		\begin{aligned}
			&\hfill r_0 = \min\left\{\frac{1}{3}(\mathcal{M} + 1)^{-1},\ \frac{1}{6}\min_{1 \leq i < j \leq n} |\mathbf{x}_i - \mathbf{x}_j|\right\}, \\
			&\hfill \varepsilon = \min\left\{\frac{1}{C_1 + (C_6)^2},\frac{1}{\left(C_4(T)T\right)^2},\frac{1}{4C_5(T)T},1\right\}, \\
			&\hfill m = \max\left\{ \left\lfloor \frac{256(2+C_2(r_0)\varepsilon)^2r_{0}^{3}-3}{2} \right\rfloor + 1,  M(r_0,\omega) \right\},
		\end{aligned}
	\end{equation}
	where the constants $C_1$, $C_2(r_0)$, and $M(r_0,\omega)$ are specified in Theorem~\ref{thm:main_result}; $C_4(T)$ and $C_5(T)$ are given by \eqref{key estimate} and \eqref{5.16}, respectively, in Theorem~\ref{thm:5.2}; $C_6$ is given in Theorem~\ref{thm:5.2}; and the prescribed time $T$ is as given in Theorem~\ref{thm:main2}.
	
	Then, we choose the initial inputs for problem \eqref{1.2} as specified in \eqref{initial condition}, with the corresponding $u_0$ determined by the parameters $r_0$, $\varepsilon$, and $m$ in \eqref{eeq 5.20}. Let $\mathscr{U}$ denote the unique solution to auxiliary problem \eqref{nonlinear hyperbolic equation with source term and zero initial condition}. A direct computation verifies that
	\begin{equation}\label{eq 5.21}
		U := u_0 + \mathscr{U}
	\end{equation}
	constitutes the unique solution to problem \eqref{1.2}. Since $u_0$ is smooth and defined on $[0,\infty)$, the lifespan of $U$ is entirely determined by that of $\mathscr{U}$. As established by the lifespan of $\mathscr{U}$ in \eqref{eq5.17} and the explicit expression for $\varepsilon$ in \eqref{eeq 5.20}, $U$ exists on the interval $[0,T]$, where $T$ denotes the prespecified time interval given in Theorem \ref{thm:main2}. Moreover, Theorems \ref{thm:5.2} and \ref{thm:main_result} together with Remark \ref{remark 2.3} yield $U \in L^{\infty}\big(0,T;H^{3}_{\text{loc}}(\mathbb{R}^d)\big)$ and $U_t \in L^{\infty}\big(0,T;H^{2}_{\text{loc}}(\mathbb{R}^d)\big)$.
	
	Next, applying the mean value theorem to \eqref{u0 smalles} and \eqref{3 d} in Theorem \ref{3.1} yields points $\mathbf{z}_i \in B_{3r_0}(\mathbf{x}_i) \backslash \overline{D}$ ($i=1,2,\dots,n$) such that for all $t \in [0,T]$:
	\begin{equation}\label{5.23}
		|\nabla u_0(\mathbf{z}_i,t)| \geqslant \frac{
			\tfrac{1}{16}\sqrt{2m+3} \, r_0^{-3/2} - \left(C_1 + C_2(r_0)\right)\varepsilon
		}{3r_0}.
	\end{equation}
	Combining \eqref{eeq 5.20}, \eqref{eq 5.21}, \eqref{5.23}, and properties 3 in Theorem \ref{thm:5.2}, we obtain
	\begin{equation*}
		\begin{aligned}
			|\nabla U(\mathbf{z}_i,t)| &\geqslant \frac{\left(
				\tfrac{1}{16}\sqrt{2m+3} \, r_0^{-3/2} -C_2(r_0)\varepsilon\right)-  C_1\varepsilon
			}{3r_0} - C_6\sqrt{\varepsilon} \\
			& > \frac{2 - 1}{3r_0} - 1 = \frac{1}{3r_0} - 1 > \mathcal{M},
		\end{aligned}
	\end{equation*}
	for a.e. $t \in [0,T]$. Here, we choose $m \geqslant \left\lfloor \frac{256(2 + C_2(r_0)\varepsilon)^2 r_{0}^{3} - 3}{2} \right\rfloor +1$ in \eqref{eeq 5.20} to guarantee that  $	\tfrac{1}{16}\sqrt{2m+3} \, r_0^{-3/2} -C_2(r_0)\varepsilon>2 $. 
	
	This implies
	\begin{equation*}
		\|\nabla U(\cdot,t)\|_{L^{\infty} (B_{3r_0}(\mathbf{x}_i) \backslash \overline{D})} \geq \mathcal{M}, \quad i=1,2,\dots,n,
	\end{equation*}
	for a.e. $t\in [0,T]$.
	
	Furthermore, under the above construction, the almost-blowup points are confined within the balls $B_{3r_0}(\mathbf{x}_i)$ which implies that 
	\begin{equation*}
		\operatorname{meas}\left\{ \mathbf{x} \in \bigcup_{i=1}^{n} \left(B_{3r_0}(\mathbf{x}_i)\backslash \overline{D}\right) : |\nabla U(\mathbf{x},t)| > \mathcal{M} \right\} < 36n\pi r_0^3 < \frac{4n\pi}{3(\mathcal{M} + 1)^3}  \to 0
	\end{equation*}
	as $\mathcal{M} \to \infty$, for almost every $t \in [0,T]$. The two-dimensional case follows analogously.
	
	The proof is complete.
\end{proof}

	\section*{Acknowledgment}
	The work of H. Diao is is supported by National Natural Science Foundation of China  (No. 12371422), the Fundamental Research Funds for the Central Universities, JLU.   The work of H. Liu is supported by the Hong Kong RGC General Research Funds (projects 11304224, 11311122 and 11303125),  the NSFC/RGC Joint Research Fund (project N\_CityU101/21), the France-Hong Kong ANR/RGC Joint Research Grant, A-CityU203/19.

	\medskip
	

\end{document}